\newtheorem{theorem}{Theorem}
\newtheorem{lemma}{Lemma}
\newtheorem{prop}{Proposition}
\newtheorem{exam}{Example} 
\newtheorem{corollary}{Corollary}
\newcommand{\ent}{\mathop{\mathrm{ent}}}
\begin{document}

\title{topological entropy on closed sets in $[0,1]^2$}

%line 434

\author{ Goran Erceg}
\address{Faculty of Science,
University of Split, Rudjera Bo\v{s}kovi\'{c}a 33, 21000 Split, Croatia}
\email{gorerc@pmfst.hr}

\author{Judy Kennedy}
\address{Department of Mathematics,
PO Box 10047,
Lamar University, Beaumont TX 77710,
USA}
\email{kennedy9905@gmail.com }

\today

\subjclass[2010]{54C08, 54E45, 54F15, 54F65, 37B10, 37B45, 37E05}     
\keywords{generalized inverse limit, topological entropy, invariant Cantor set, subshift of finite type, Mahavier product}

\maketitle

\begin{abstract}
We generalize the definition of topological entropy due to Adler, Konheim, and McAndrew \cite{AKM} to set-valued functions from a closed subset $A$ of the interval to closed subsets of the interval. We view these set-valued functions, via their graphs, as closed subsets of $[0,1]^{2}$. We show that many of the topological entropy properties of continuous functions of a compact topological space to itself hold in our new setting, but not all. We also compute the topological entropy of some examples, relate the entropy to other dynamical and topological properties of the examples, and we give an example of a closed subset $G$ of $[0,1]^2$ that has $0$ entropy but $G\cup \{ (p,q) \},$ where $(p,q) \in [0,1]^2\setminus G,$ has infinite entropy.

\end{abstract}

\vphantom{}

\section{Introduction}

Generalized inverse limits, or inverse limits with set-valued functions, a subject studied only since 2003 with its introduction by Bill Mahavier, provides an entirely new way to study multi-valued functions, a way that does not lose information under iteration. But it is increasingly apparent that they also offer a rich source of new examples of dynamical systems and continua. In fact, they offer a sort of lab in which one can make mathematical experiments - and then have a real chance, with some effort, of understanding (via some sort of coding) deeply the resulting topology and dynamics of the example, and of how the topology and dynamics are interacting.

\vphantom{}

In this paper we generalize the idea of topological entropy to closed subsets of $[0,1]^{2}$, and later to closed subsets of $[0,1]^n$, for $n$ a positive integer greater than $1$.  We reduce the problem of computing topological entropy in our context to one of counting the ``boxes'' (elements of our grid covers) certain sets generated by our closed subset of $[0,1]^{n}$ intersect. We also relate the topological entropy of the examples we give to the topology and dynamics of the examples.

\vphantom{}

James Kelly and Tim Tennant \cite{KT} have recently studied topological entropy of set-valued functions using Bowen's definition. Our focus is different, as we use the original definition using open covers due to Adler, Konheim and McAndrew [AKM]. There is some overlap of results, and we point these out as they occur. Our results do agree with theirs.

\vphantom{}

 Suppose $X$ is a compact metric space. Recall that if $f:X \to X$ is a continuous function, the \textit{inverse limit space} generated by $f$ is 
 
 %%%%%%%%%%%%%% END p1

\begin{equation*}
  \underset{\longleftarrow }{\lim }(X,f):=\{(x_{0},x_{1},\ldots ): x_{i} \in X \text{ for each } i \text{ and }
\text{ for each }i\geq 1,x_{i-1}= f(x_{i})\},
\end{equation*}

\noindent which we can abbreviate as $\underset{\longleftarrow }{\lim }f$. The map $f$ on $X$ induces a natural homeomorphism $\sigma$ on $\underset{\longleftarrow }{\lim }f$, which is called the \textit{shift map}, and is defined by 
$$\sigma((x_{0},x_{1}, \ldots))=(x_{1},x_{2},\dots)$$ for $(x_{0},x_{1}, \ldots)$ in  $\underset{\longleftarrow }{\lim }f$.

\vphantom{}

In \cite{Bo2}, R. E. Bowen showed that the topological entropy of the shift map $\sigma$ on $\underset{\longleftarrow }{\lim }f$ is equal to the topological entropy of $f$, where topological entropy of a continuous function on a compact metric space has the original definition due Adler, Konheim, and McAndrew \cite{AKM} and later to Dinaburg \cite{D} and Bowen \cite{Bo}. (While defined differently, these two definitions of topological entropy for a continuous map on a compact metric space coincide.)

\vphantom{}

 Generalized inverse limits, or inverse limits with set-valued functions, are a generalization of (standard) inverse limits. Here, rather than beginning with a continuous function $f$ from a compact metric space $X$ to itself, we begin with an upper semicontinuous function $f$ from $X$ to the closed subsets of $X$. In other words, now our function $f$ is set-valued. The \textit{generalized inverse limit}, or the \textit{inverse
limit with set-valued mappings}, associated with this mapping is the set 
\begin{equation*}
  \underset{\longleftarrow }{\lim }(X,f):=\{(x_{0},x_{1},\ldots ): x_{i} \in X \text{ for each } i, \text{ and }
\text{ for each }i\geq 1,x_{i-1}\in f(x_{i})\},
\end{equation*}

\noindent which is a closed subspace of   $\Pi _{i \ge 0}X$  endowed with the product topology. (As is the case with standard inverse limits, these can be defined in much more general settings, but we do not need those here.) Here again, the shift map $\sigma$ defined above takes $\underset{\longleftarrow }{\lim }(X,f)$ onto itself, but it is no longer a homeomorphism: $\sigma: \underset{\longleftarrow }{\lim }(X,f) \to \underset{\longleftarrow }{\lim }(X,f)$ is a continuous surjection.

\vphantom{}

 The topic of generalized inverse limits  is currently an
intensely studied area of continuum theory, with papers from many authors at this point. See \cite{B}, \cite{BCMM1}, \cite{BCMM2}, \cite{BCMM3}, \cite{BK}, \cite{CR} \cite{GK1}, \cite {GK2}, \cite{Il}, \cite{IM2}, \cite{I1}, \cite{I2}, \cite{I3}, \cite{I4}, \cite{I7},\cite{L}, \cite{M}, \cite{N1},  \cite{N2},  \cite{N3},  \cite{N4}, and \cite{V}, for example. (This list is far from exhaustive, with the number of papers on generalized inverse limits now over 60.)
Tom Ingram and Bill Mahavier included a chapter on these spaces in their
 book \cite{IM1}, and since 
then Tom Ingram has written another book on the topic, 
 \cite{I6}. While most of the research has been on understanding the topology of these spaces, some researchers have recently turned to understanding the dynamical properties, since, for a set-valued map $f:X \to 2^{X}$,  $(\underset{\longleftarrow }{\lim }f, \sigma)$ is a discrete dynamical system. (See \cite{RT}, \cite{KT}, \cite{KN}.)

\vphantom{}

\section{Background and Notation}

Sometimes it is convenient to index our factor spaces, sometimes not. Suppose for each integer $i \ge 0$, $I_{i}=[0,1]=I$. The Hilbert cube is $I^{\infty}=[0,1]^{\infty}=\Pi^{\infty}_{0}I_{i}$. 

\vphantom{}

%%%%%%end p 2

We often need to talk about various projections from a subset of $I^{\infty}$ into an interval or a product of intervals. Unless it leads to confusion, for a subset $X$ of $I^{\infty}$, and a point $x=(x_{0},x_{1},\ldots)$ in $X$, $\pi_{i}(x)=x_{i}$. (That is, we do not specify the momentary domain of $\pi_{i}$.)  Likewise, if $N$ is a positive integer,  $x=(x_{0},x_{1},..x_{N})$, $x \in X \subset I^{N+1}$, then $\pi_{i}(x)=x_{i}$ for $0 \le i \le N$. Also, we make following definitions.

\begin{itemize}

\item We use both $\mathbb{N}$ and $\mathbb{Z}^{+}$ to denote the positive integers.

\item Let $m \ge 0$ be an integer less than the integer $n$. Then $\langle m,n \rangle = \{m, m+1, \ldots,n\}$, and we call $\langle m,n \rangle$ the \textit{integer interval from} $m$ \textit{to} $n$. Then $\pi_{ \langle m,n \rangle}(x)=(x_{m},x_{m+1},\ldots,x_{n})$. We define $\langle m, \infty \rangle$ to be the set $\{m, m+1, \ldots\}$.

\item Let $ A=\{n_{1},n_{2},\ldots\}$ denote a subset of the nonnegative integers (not necessarily listed in order, and either finite or infinite). Then $\pi_{A}(x)=( x_{n_{1}},x_{n_{2}},\ldots )$.

\item If $A$ is a subset of the space $X$, then $A^{\circ}$ denotes the interior of $A$ in $X$, and $\overline{A}$ denotes the closure of $A$ in $X$.

\item Suppose  $x=(x_{0},x_{1},\ldots,x_{n})$  is a point in $I^{n+1}$ and $y=(y_{0},y_{1},\ldots)$ is a point in $I^{\infty}$. Then we define $x \oplus y$ to be the point $(x_{0},\ldots,x_{n},y_{0},y_{1},\ldots)$ in $I^{\infty}$.  

\item The metric we use on $I^{\infty}$ is  $d(x,y)=\sum^{\infty}_{i=0} \frac{\mid \pi_{i}(x)-\pi_{i}(y) \mid}{2^{i}}$, where $x$ and $y$ are points in $I^{\infty}$.  

\item The \textit{graph} of the set-valued function $f:I \to 2^{I}$ is the set $\Gamma(f)=\{(x,y): y \in f(x) \}$.

\item The set-valued function $f:I \to 2^{I}$ is \textit{upper semicontinuous at the point} $x$ in $I$  if for each open set $V$ in $I$ that contains $f(x)$, there is an open set $U$ in $I$ that contains $x$, and if $z \in U$, then the set $ f(z) \subset V$. The function $f$ is \textit{upper semicontinuous} if it is upper semicontinuous at each point $x$ in $I$. The function $f$ is upper semicontinuous if and only if $\Gamma(f)$ is closed in $I \times I$. (See \cite{A} and \cite{IM1}.)

\item The set-valued function $f:I \to 2^{I}$ is called \textit{surjective} if for each $y \in I$, there is $x \in I$ such that $y \in f(x)$.

\item The \textit{shift map} $\sigma: I^{\infty} \to I^{\infty}$ is defined by $\sigma((x_{0},x_{1},x_{2},\ldots))=(x_{1},x_{2},\ldots)$. The shift map takes $I^{\infty}$ continuously onto itself. Also, if $f:I_{i} \to 2^{I_{i-1}}$ for each $i>0$, and  $M=\underset{\longleftarrow }{\lim }f$, then $\sigma(M) \subset M$. If $f$ is surjective, then $\sigma(M) = M$, and $M$ is invariant under the action of $\sigma$. However, unless $f:I \to I$ is a function, $\sigma$ is not one-to-one.

\item For $A \subset I_{0} \times I_{1}$, define $A^{-1}=\{ (x,y): (y,x) \in A \}$. More generally, if $N$ is a positive integer and $A \subset \Pi_{i=0}^{N} I_{i}$, then $A^{-1}= \{ (x_{N},x_{N-1}, \ldots, x_{1},x_{0}) \in \Pi_{i=0}^{N} I_{i}: (x_{0},x_{1}, \dots, x_{N-1},x_{N}) \in A \}$.

\item Suppose $X,Y$ are topological spaces, and $\mathcal{U}$ is a collection of sets that covers $X$. Then $\mathcal{U} \times Y$ denotes the collection $\{u\times Y: u \in \mathcal{U} \}$, which covers $X \times Y$. 

\item Suppose $\alpha$ is a collection of (open) sets in the space $X$, and $H \subset X$. Then $\alpha \cap H:=\{ A \cap H: A \in \alpha \}$.

\end{itemize}

\subsection*{Topological Entropy Using Open Covers}

For completeness, we review the traditional version of topological entropy (due to Adler, Konheim, and McAndrew \cite{AKM}) and its properties here and follow to a large extent the discussion in Peter Walters' book \cite{Walters book}. We conclude the subsection with theorems on topological entropy due to Bowen \cite{Bo2} that we use. In the next section we recycle and generalize this definition to our new setting.   

\vphantom{}

%%%%%%%%%%%%%%% END p 3

\textbf{Definitions.}

\begin{itemize}

\item If $\mathcal{U}$ is a finite collection of sets, define $N^{*}(\mathcal{U})$ to be the cardinality of the collection $\mathcal{U}$. If $\mathcal{U}$ is an open cover of the compact topological space $X$, let $N(\mathcal{U})$ denote the number of sets in a finite subcover of $\mathcal{U}$ of smallest cardinality. Define the \textit{entropy} $H(\mathcal{U})$ by $H(\mathcal{U})=\log N(\mathcal{U})$.

\item If $\mathcal{U}$ is a finite collection of open sets that covers the set $G$, then a subcover $\mathcal{U}'$ of $G$ in $\mathcal{U}$ is \textit{minimal} if there does not exist a subcover of $G$ in $\mathcal{U}$ of smaller cardinality.

\item If $\mathcal{U}$ and $\mathcal{V}$ are are open covers of a space $X$, define the \textit{join}  $\mathcal{U} \vee \mathcal{V}$ to be the collection $$\mathcal{U} \vee \mathcal{V}=\{ u \cap v : u \in \mathcal{U}, v \in \mathcal{V} \}$$ of open sets.   The join $\mathcal{U} \vee \mathcal{V}$ is also an open cover of the space $X$. We can likewise define, for a finite collection $\{\mathcal{U}_{i} \}_{i=1}^{n}$ of open covers of $X$, the \textit{join} $\vee_{i=1}^{n} \mathcal{U}_{i}$. 

%%%%%%%%%%end p 3

\item  If $\mathcal{U}$ and $\mathcal{V}$ are are open covers of  the compact topological space $X$, then $\mathcal{U}$ is a \textit{refinement} of $\mathcal{V}$ if each $u \in \mathcal{U}$ is contained in some $v \in \mathcal{V}$. We will say that $\mathcal{V} < \mathcal{U}$ and also that $\mathcal{U} > \mathcal{V}$. Note that if $\mathcal{U}$ is a subcover of $X$ in $\mathcal{V}$, then $\mathcal{U}$ is both a subcollection of $\mathcal{V}$ and a  refinement of $\mathcal{V}$, and $\mathcal{V} < \mathcal{U}$.

\item If $X$ is a compact topological space, $\alpha$ is an open cover of $X$, and $T:X \to X$ is continuous, then $T^{-1}(\alpha)$ is the open cover consisting of all sets $T^{-1}(A)$ where $A \in \alpha$.  Also, $$T^{-1}(\alpha \vee \beta)=T^{-1}(\alpha) \vee T^{-1}(\beta),$$ and 
$$\alpha < \beta \text{ implies } T^{-1}(\alpha) < T^{-1}(\beta).$$

We denote $\alpha \vee T^{-1}(\alpha) \vee \cdots \vee T^{-n}(\alpha)$ by $\vee_{i=0}^{n}T^{-i}(\alpha)$.

\end{itemize}

\vphantom{}

\noindent \textbf{Remarks.} Suppose $\alpha$ and $\beta$ are open covers of the compact topological space $X$. Then 

\begin{enumerate}

\item $H(\alpha) \ge 0$.

\item $H(\alpha)=0$ if and only if $N(\alpha)=1$ if and only if $X \in \alpha$.

\item If $\alpha < \beta$, then $H(\alpha) \le H(\beta)$.

\item $H(\alpha \vee \beta) \le H(\alpha) + H(\beta)$.

\item If $T:X \to X$ is a continuous map, then $H(T^{-1}(\alpha)) \le H(\alpha)$. If $T$ is also surjective, then $H(T^{-1}(\alpha)) = H(\alpha)$.

\end{enumerate}

(See \cite{Walters book} for proofs of (3), (4), and (5) above.)

\vphantom{}

We will need the following lemma, which is used in the proof of Theorem \ref{ent lim}, and in our results.

\begin{lemma} \label{Walters} \cite {Walters book} If $\{ a_{n} \}_{n \ge 1}$ is a sequence of nonnegative real numbers such that $a_{n+p} \le a_{n} +a_{p}$ for each $n,p \in \mathbb{N}$, then $\lim\limits_{n \to \infty} \frac{a_{n}}{n}$ exists and equals $ \underset{n}{\inf}\frac {a_{n}}{n}$.

\end{lemma}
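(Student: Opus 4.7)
The plan is to prove this classical subadditivity (Fekete) lemma by the standard two-sided squeeze: show that $\limsup a_n/n \leq \inf_n a_n/n \leq \liminf a_n/n$, so the limit exists and equals the infimum.

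First I would set $L := \inf_{n \geq 1} a_n/n$, which is a well-defined nonnegative real number because each $a_n/n \geq 0$. The inequality $\liminf_{n \to \infty} a_n/n \geq L$ is immediate from the definition of infimum, so the content of the lemma is the reverse inequality $\limsup_{n \to \infty} a_n/n \leq L$.

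To establish this, fix $\varepsilon > 0$ and choose $p \in \mathbb{N}$ with $a_p/p < L + \varepsilon$. For an arbitrary $n \in \mathbb{N}$, apply the division algorithm to write $n = qp + r$ with $q \geq 0$ and $1 \leq r \leq p$ (shifting the usual range so that the remainder is always a positive integer, since $a_0$ is not defined). Iterated subadditivity then yields $a_n \leq q\, a_p + a_r$, so
\[
\frac{a_n}{n} \;\leq\; \frac{qp}{n}\cdot\frac{a_p}{p} + \frac{a_r}{n}.
\]
Let $M = \max\{a_1, a_2, \ldots, a_p\}$, a constant depending only on $p$. As $n \to \infty$ with $p$ fixed, $qp/n \to 1$ and $a_r/n \leq M/n \to 0$, so $\limsup_{n \to \infty} a_n/n \leq a_p/p < L + \varepsilon$.

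Since $\varepsilon > 0$ was arbitrary, $\limsup_{n \to \infty} a_n/n \leq L$, which combined with the easy lower bound gives $\lim_{n \to \infty} a_n/n = L = \inf_n a_n/n$. There is no real obstacle here; the only mild subtlety is handling the remainder $r=0$ case, which I avoid by insisting $1 \leq r \leq p$ so that $a_r$ is always defined and uniformly bounded by the constant $M$.
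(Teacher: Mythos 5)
Your proof is correct. The paper does not prove this lemma at all -- it simply cites Walters' book -- and your argument is precisely the standard Fekete subadditivity proof given there (fix $p$ realizing the infimum up to $\varepsilon$, divide $n$ by $p$, iterate subadditivity, and let $n \to \infty$), so there is nothing to compare beyond noting that your handling of the remainder via $1 \le r \le p$ and the bound $M = \max\{a_1,\dots,a_p\}$ is exactly the right way to keep every term defined and the error uniformly controlled.
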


\begin{theorem} \label{ent lim} (See \cite{Walters book}.) If $\alpha$ is an open cover of $X$ and $T:X \to X$ is continuous, then $\lim_{n \to \infty} \frac{H(\vee_{i=0}^{n}T^{-i}(\alpha))}{n}$ exists.

\end{theorem}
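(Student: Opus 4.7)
The plan is to apply Lemma \ref{Walters} to the sequence $a_{n}=H(\vee_{i=0}^{n-1}T^{-i}(\alpha))$ (the shift in indexing from the theorem statement is harmless for the limit). Nonnegativity of $a_{n}$ follows immediately from Remark (1), so the only real content is to verify the subadditivity condition $a_{n+p}\le a_{n}+a_{p}$.

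First I would split the join at stage $n$:
\begin{equation*}
\bigvee_{i=0}^{n+p-1}T^{-i}(\alpha)=\Bigl(\bigvee_{i=0}^{n-1}T^{-i}(\alpha)\Bigr)\vee\Bigl(\bigvee_{i=n}^{n+p-1}T^{-i}(\alpha)\Bigr),
\end{equation*}
and then observe that the second factor is $T^{-n}\bigl(\bigvee_{j=0}^{p-1}T^{-j}(\alpha)\bigr)$, using the identity $T^{-n}(\beta\vee\gamma)=T^{-n}(\beta)\vee T^{-n}(\gamma)$ recorded in the last bulleted item before the Remarks.

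Next I would apply Remark (4) (subadditivity of $H$ under the join) followed by Remark (5) (the inequality $H(T^{-n}(\beta))\le H(\beta)$ for continuous $T$) to get
\begin{equation*}
a_{n+p}\le H\Bigl(\bigvee_{i=0}^{n-1}T^{-i}(\alpha)\Bigr)+H\Bigl(T^{-n}\bigl(\bigvee_{j=0}^{p-1}T^{-j}(\alpha)\bigr)\Bigr)\le a_{n}+a_{p}.
\end{equation*}
With subadditivity and nonnegativity in hand, Lemma \ref{Walters} gives that $\lim_{n\to\infty}a_{n}/n$ exists (and equals $\inf_{n}a_{n}/n$). Shifting the indexing back to match the theorem statement changes $n$ to $n+1$ in the denominator, but since $\tfrac{n+1}{n}\to 1$ the limit $\lim_{n\to\infty}H(\vee_{i=0}^{n}T^{-i}(\alpha))/n$ exists and agrees with $\lim a_{n}/n$.

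There is no real obstacle here: the proof is essentially a bookkeeping exercise that packages the three ingredients (split the join, pull out $T^{-n}$, apply subadditivity of $H$ and invariance under $T^{-1}$) into the hypothesis of Lemma \ref{Walters}. The one spot worth being careful about is confirming that the join $\bigvee_{i=n}^{n+p-1}T^{-i}(\alpha)$ really equals $T^{-n}\bigl(\bigvee_{j=0}^{p-1}T^{-j}(\alpha)\bigr)$, which is where the identity $T^{-n}(\beta\vee\gamma)=T^{-n}(\beta)\vee T^{-n}(\gamma)$ is used, iterated $p-1$ times.
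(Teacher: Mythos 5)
Your proof is correct and is exactly the standard argument the paper has in mind: the paper itself gives no proof beyond citing Walters, but it explicitly flags Lemma \ref{Walters} as the ingredient used, and your splitting of the join, the identity $\vee_{i=n}^{n+p-1}T^{-i}(\alpha)=T^{-n}(\vee_{j=0}^{p-1}T^{-j}(\alpha))$, and Remarks (4) and (5) assemble the subadditivity hypothesis in the intended way. The index-shift bookkeeping at the end is also handled correctly.
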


%%%%%%%%%%%%%%% END p 4

\vphantom{}

\textbf{Definition.} If $\alpha$ is an open cover of the compact topological space $X$, and $T:X \to X$ is continuous, then the \textit{entropy of} $T$ \textit{relative to} $\alpha$ is $h(T,\alpha)$ given by
$$h(T,\alpha)=\lim_{n \to \infty} \frac{H(\vee_{i=0}^{n}T^{-i}(\alpha))}{n}.$$

\textbf{Remarks.} 
\begin{enumerate}

\item $h(T,\alpha) \ge 0$.
\item If $\alpha < \beta$, then $h(T,\alpha)\le h(T,\beta)$.
\item $h(T,\alpha) \le H(\alpha)$.

\end{enumerate}

(See \cite{Walters book} for proofs of (1), (2), and (3) above.)

%%%%%%%%%%%%end p 4

\vphantom{}

\textbf{Definition.} If $T:X \to X$ is continuous, the \textit{topological entropy} $h(T)$ of $T$ is given by 
$$ h(T)=\sup_{\alpha} h(T,\alpha)$$
where $\alpha$ ranges over all open covers of $X$. 

\vphantom{}

\vphantom{}

\textbf{Remarks.}

\begin{enumerate}

\item $\infty \ge h(T) \ge 0$. 
\item In the definition of $h(T)$ one can take the supremum over finite open covers of $X$. This follows from the fact that if $\alpha < \beta$, then $h(T,\alpha)\le h(T,\beta)$.

\item If $id_{X}$ denotes the identity map from $X$ to $X$, then $h(id_{X})=0$.

\item If $Y$ is a closed subset of $X$ and $T(Y)=Y$, then $h(T|Y) \le h(T)$.

\end{enumerate}

\vphantom{}

\begin{theorem} (See \cite{Walters book}.) If $X_{1},X_{2}$ are compact spaces and $T_{i}:X_{i} \to X_{i}$ are continuous for $i=1,2$, and if $\phi:X_{1} \to X_{2}$ is a continuous map with $\phi(X_{1})=X_{2}$ and $\phi \circ T_{1}=T_{2} \circ \phi$, then $h(T_{1}) \ge h(T_{2})$. If $\phi$ is a homeomorphism, then $h(T_{1}) = h(T_{2})$.

\end{theorem}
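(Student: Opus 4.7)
The plan is to prove $h(T_2) \le h(T_1)$ by transporting open covers of $X_2$ back to $X_1$ via $\phi^{-1}$, showing that this pullback preserves all the relevant combinatorial data, and then using the semiconjugacy $\phi \circ T_1 = T_2 \circ \phi$ to match up the joins that define entropy relative to a cover.

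First, I would start with an arbitrary finite open cover $\beta$ of $X_2$ and consider its pullback $\phi^{-1}(\beta) = \{\phi^{-1}(B) : B \in \beta\}$, which is a finite open cover of $X_1$ by continuity of $\phi$. The key combinatorial observation is that $N(\phi^{-1}(\beta)) = N(\beta)$. Indeed, any subcover of $\beta$ pulls back to a subcover of $X_1$, so $N(\phi^{-1}(\beta)) \le N(\beta)$; conversely, if $\{\phi^{-1}(B_{i_1}), \ldots, \phi^{-1}(B_{i_k})\}$ covers $X_1$, then given any $y \in X_2$, surjectivity of $\phi$ yields $x \in X_1$ with $\phi(x) = y$, and $x \in \phi^{-1}(B_{i_j})$ forces $y \in B_{i_j}$, so $\{B_{i_1},\ldots,B_{i_k}\}$ covers $X_2$, giving $N(\beta) \le N(\phi^{-1}(\beta))$. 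Hence $H(\phi^{-1}(\beta)) = H(\beta)$.

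Next, using $\phi \circ T_1 = T_2 \circ \phi$, an easy induction gives $\phi \circ T_1^{i} = T_2^{i} \circ \phi$ for every $i \ge 0$, so that $T_1^{-i}(\phi^{-1}(A)) = \phi^{-1}(T_2^{-i}(A))$ for each $A \subset X_2$. Because preimages commute with intersections, this yields
$$\bigvee_{i=0}^{n} T_1^{-i}(\phi^{-1}(\beta)) = \phi^{-1}\!\left(\bigvee_{i=0}^{n} T_2^{-i}(\beta)\right).$$
Applying the observation of the previous paragraph to the cover $\bigvee_{i=0}^{n} T_2^{-i}(\beta)$ of $X_2$ (whose pullback is exactly the left-hand side), I conclude
$$H\!\left(\bigvee_{i=0}^{n} T_1^{-i}(\phi^{-1}(\beta))\right) = H\!\left(\bigvee_{i=0}^{n} T_2^{-i}(\beta)\right).$$
Dividing by $n$, letting $n \to \infty$, and invoking Theorem \ref{ent lim}, I get $h(T_1,\phi^{-1}(\beta)) = h(T_2,\beta)$. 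Since $\phi^{-1}(\beta)$ is an open cover of $X_1$, the left-hand side is at most $h(T_1)$. Taking the supremum over $\beta$ yields $h(T_2) \le h(T_1)$.

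For the second statement, if $\phi$ is a homeomorphism, then $\phi^{-1}: X_2 \to X_1$ is also a continuous surjection satisfying $\phi^{-1} \circ T_2 = T_1 \circ \phi^{-1}$, so the same argument with the roles of $X_1$ and $X_2$ reversed gives $h(T_1) \le h(T_2)$, hence equality. The main (and essentially only) subtlety is the surjectivity input, which is used precisely to ensure $N(\phi^{-1}(\beta)) \ge N(\beta)$; without it one only gets $h(T_2) \le h(T_1|\phi(X_1))$, not the stated inequality on all of $X_1$, but since the hypothesis is that $\phi(X_1) = X_2$, this is not an obstacle here.
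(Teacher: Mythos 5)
Your proof is correct and is essentially the standard argument from Walters' book that the paper cites for this theorem (which it states without proof): pull back covers by $\phi^{-1}$, use surjectivity to get $N(\phi^{-1}(\beta)) = N(\beta)$, and use the semiconjugacy to identify the joins. Nothing further is needed.
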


\begin{theorem}  (See \cite{Walters book}.) If $T:X \to X$ is a homeomorphism of a compact space $X$, then $h(T)=h(T^{-1})$.

\end{theorem}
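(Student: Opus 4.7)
The plan is to show that for every open cover $\alpha$ of $X$, we have $h(T,\alpha)=h(T^{-1},\alpha)$; taking the supremum over $\alpha$ then gives $h(T)=h(T^{-1})$.

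The key preliminary observation is that if $S:X\to X$ is any homeomorphism and $\beta$ is an open cover of $X$, then $H(S(\beta))=H(\beta)$, where $S(\beta):=\{S(B):B\in\beta\}$. Indeed, $S$ induces a bijection $\beta\to S(\beta)$, and since $S$ is a homeomorphism it carries subcovers of $\beta$ to subcovers of $S(\beta)$ of the same cardinality (and vice versa via $S^{-1}$), so $N(S(\beta))=N(\beta)$.

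Now fix an open cover $\alpha$ and a positive integer $n$. Because $T$ is a homeomorphism, $T^n$ is a homeomorphism and commutes with finite intersections, so
\[
T^n\!\left(\bigvee_{i=0}^{n}T^{-i}(\alpha)\right)
=\bigvee_{i=0}^{n} T^{n-i}(\alpha)
=\bigvee_{j=0}^{n}(T^{-1})^{-j}(\alpha),
\]
after the reindexing $j=n-i$ and noting $T^{n-i}=(T^{-1})^{-(n-i)}$. Applying the preliminary observation with $S=T^n$ yields
\[
H\!\left(\bigvee_{i=0}^{n}T^{-i}(\alpha)\right)
=H\!\left(\bigvee_{j=0}^{n}(T^{-1})^{-j}(\alpha)\right).
\]
Dividing by $n$ and letting $n\to\infty$ (the limits exist by Theorem~\ref{ent lim} applied to both $T$ and $T^{-1}$) gives $h(T,\alpha)=h(T^{-1},\alpha)$. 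Taking the supremum over all open covers $\alpha$ of $X$ produces $h(T)=h(T^{-1})$.

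There is really no serious obstacle here: the only point to verify carefully is the preliminary statement that a homeomorphism preserves the entropy of a cover, and the identification of $T^n\!\bigl(\vee_{i=0}^n T^{-i}(\alpha)\bigr)$ with the corresponding join for $T^{-1}$. Both are routine once one uses that $T$ is bijective and bicontinuous.
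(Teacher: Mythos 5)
Your proof is correct and is precisely the standard argument from Walters' book that the paper cites for this theorem (the paper gives no proof of its own): apply the homeomorphism $T^n$ to the join $\vee_{i=0}^{n}T^{-i}(\alpha)$, use that a homeomorphism preserves $N(\cdot)$ of a cover, and reindex to obtain the corresponding join for $T^{-1}$. No gaps; the preliminary observation $H(S(\beta))=H(\beta)$ also follows from the paper's Remark (5) applied to the continuous surjection $S^{-1}$.
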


\begin{theorem} (See \cite{Walters book}, Thm. 7.10.) If $T:X \to X$ is a continuous map of a compact metric space $X$, then $h(T^{n})=n \thinspace h(T)$.

\end{theorem}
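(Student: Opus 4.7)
The plan is to prove the two inequalities $h(T^n) \le n\, h(T)$ and $h(T^n) \ge n\, h(T)$ separately, working entirely at the level of entropies relative to open covers and then taking suprema. Throughout, I will use the basic identity $(T^n)^{-j}(\alpha) = T^{-jn}(\alpha)$ for any open cover $\alpha$, and the fact (already cited in the Remarks) that refining covers can only increase $H$.

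For the upper bound, fix an open cover $\alpha$ of $X$. Observe that
$$\bigvee_{j=0}^{k} (T^n)^{-j}(\alpha) \;=\; \bigvee_{j=0}^{k} T^{-jn}(\alpha)$$
is a subcollection of $\bigvee_{i=0}^{kn} T^{-i}(\alpha)$, hence the latter is a refinement of the former. Therefore
$$H\!\left(\bigvee_{j=0}^{k} (T^n)^{-j}(\alpha)\right) \;\le\; H\!\left(\bigvee_{i=0}^{kn} T^{-i}(\alpha)\right),$$
and dividing by $k$ and passing to the limit (Theorem \ref{ent lim} justifies existence) yields $h(T^n,\alpha) \le n\, h(T,\alpha) \le n\, h(T)$. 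Taking the supremum over $\alpha$ gives $h(T^n) \le n\, h(T)$.

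For the lower bound, the key idea is to compute $h(T^n,\cdot)$ against a cover that is already a join of preimages under $T$. Given an open cover $\alpha$ of $X$, put $\beta = \bigvee_{i=0}^{n-1} T^{-i}(\alpha)$. Using $(T^n)^{-j}(T^{-i}(\alpha)) = T^{-(jn+i)}(\alpha)$ and reindexing, I get
$$\bigvee_{j=0}^{k-1} (T^n)^{-j}(\beta) \;=\; \bigvee_{j=0}^{k-1}\bigvee_{i=0}^{n-1} T^{-(jn+i)}(\alpha) \;=\; \bigvee_{\ell=0}^{kn-1} T^{-\ell}(\alpha).$$
Hence
$$h(T^n,\beta) \;=\; \lim_{k\to\infty} \frac{1}{k}\, H\!\left(\bigvee_{\ell=0}^{kn-1} T^{-\ell}(\alpha)\right) \;=\; n\, h(T,\alpha),$$
where the last equality rewrites $\tfrac{1}{k} = n \cdot \tfrac{1}{kn}$ and uses Theorem \ref{ent lim} to identify the limit along the subsequence $m = kn$ with $h(T,\alpha)$. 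Since $h(T^n) \ge h(T^n,\beta) = n\, h(T,\alpha)$ for every $\alpha$, taking the supremum gives $h(T^n) \ge n\, h(T)$.

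The main obstacle is purely bookkeeping: matching indices between the joins for $T$ and for $T^n$ and being careful that the normalizing factor $\tfrac{1}{k}$ (rather than $\tfrac{1}{kn}$) is exactly what produces the factor $n$ on the right. The cover $\beta = \bigvee_{i=0}^{n-1} T^{-i}(\alpha)$ is chosen precisely so that this reindexing collapses the double join into a single join of length $kn$, which is the one content-bearing step of the argument.
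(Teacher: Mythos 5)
Your proof is correct and is essentially the standard argument from Walters' book (Theorem 7.10), which the paper cites for this statement without reproducing a proof. Both inequalities are handled exactly as in that reference: the refinement comparison for the upper bound and the choice $\beta=\bigvee_{i=0}^{n-1}T^{-i}(\alpha)$ to collapse the double join for the lower bound.
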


\begin{theorem} (\cite{Bo2}, Proposition 5.2) Suppose $f:X\to X$ is a continuous surjective map on a compact Hausdorff space. If $\sigma$ denotes the induced shift homeomorphism on the inverse limit space $\underset{\longleftarrow }{\lim }(X,f)$, then $h(\sigma)=h(f)$.

\end{theorem}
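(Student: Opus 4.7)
The plan is to leverage the canonical coordinate projections $\pi_{n}\colon\underset{\longleftarrow}{\lim}(X,f)\to X$, which are continuous, surjective (because $f$ is surjective), and satisfy the two compatibility relations $\pi_{n}=f\circ\pi_{n+1}$ and $\pi_{n}\circ\sigma=\pi_{n+1}$. Write $Y=\underset{\longleftarrow}{\lim}(X,f)$. I would establish both $h(f)\le h(\sigma)$ and $h(\sigma)\le h(f)$ from a single cover identity, which says that pulling back an open cover of $X$ along $\pi_{N}$ converts $\sigma$-dynamics on $Y$ into $f$-dynamics on $X$ at the combinatorial level.

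The identity I would prove is: for any open cover $\alpha$ of $X$, any $N\ge 0$, and any $n\ge 0$,
\[\bigvee_{i=0}^{n}\sigma^{-i}\bigl(\pi_{N}^{-1}(\alpha)\bigr)\;=\;\pi_{N+n}^{-1}\!\left(\bigvee_{j=0}^{n}f^{-j}(\alpha)\right),\]
which follows by iterating $\pi_{N}\circ\sigma^{i}=\pi_{N+i}$ and then collapsing all the resulting projections to $\pi_{N+n}$ via $\pi_{N+i}=f^{n-i}\circ\pi_{N+n}$. Because $\pi_{N+n}$ is a continuous surjection, a finite subcover of one side corresponds bijectively to a finite subcover of the other of the same cardinality, so the two sides have the same $N(\cdot)$-value and hence the same $H$-value. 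Dividing by $n$ and letting $n\to\infty$, Theorem \ref{ent lim} then yields
\[h\bigl(\sigma,\pi_{N}^{-1}(\alpha)\bigr)\;=\;h(f,\alpha).\]

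The inequality $h(f)\le h(\sigma)$ drops out from the case $N=0$: for every open cover $\alpha$ of $X$, $\pi_{0}^{-1}(\alpha)$ is an open cover of $Y$ satisfying $h(f,\alpha)=h(\sigma,\pi_{0}^{-1}(\alpha))\le h(\sigma)$, and taking the supremum over $\alpha$ gives $h(f)\le h(\sigma)$. For the opposite direction, given an arbitrary open cover $\mathcal{U}$ of $Y$, I would produce a refinement of the form $\pi_{N}^{-1}(\alpha)$ with $\alpha$ an open cover of $X$. Basic open sets in the product topology on $Y$ have the form $\pi_{\langle 0,N\rangle}^{-1}(V)$ with $V$ open in $X^{N+1}$, and the relation $\pi_{i}=f^{N-i}\circ\pi_{N}$ rewrites each such set as $\pi_{N}^{-1}(W)$ for a single open $W\subset X$. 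Compactness of $Y$ then produces a finite refinement of $\mathcal{U}$ by such basic sets, and passing to the maximum of their indices puts them all over a common $N$; the resulting refinement is $\pi_{N}^{-1}(\alpha)$ for some open cover $\alpha$ of $X$. Combining with the identity, $h(\sigma,\mathcal{U})\le h(\sigma,\pi_{N}^{-1}(\alpha))=h(f,\alpha)\le h(f)$, and taking the supremum over $\mathcal{U}$ closes the argument.

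The main obstacle I anticipate is the refinement step used in the $h(\sigma)\le h(f)$ direction: one must verify carefully that every basic open set of $Y$ can be put into single-projection form $\pi_{N}^{-1}(W)$ by using $\pi_{i}=f^{N-i}\circ\pi_{N}$ to eliminate all lower-index projections, and that a finite collection of such sets can be placed over a common $N$ without altering the cover. Once that compactness/refinement step is in place, both entropy inequalities fall out mechanically from the single cover identity, whose entire content is the commutation $\pi_{N}\circ\sigma^{i}=\pi_{N+i}$ together with surjectivity of the coordinate projections.
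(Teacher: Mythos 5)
Your argument is correct, and it is worth noting that the paper itself offers no proof of this statement: it is quoted directly from Bowen (\cite{Bo2}, Proposition 5.2), so there is no in-paper argument to compare against. What you have written is essentially the classical open-cover proof of Bowen's result, and every step checks out: the commutation relations $\pi_n\circ\sigma=\pi_{n+1}$ and $\pi_n=f\circ\pi_{n+1}$ do yield $\sigma^{-i}(\pi_N^{-1}(\alpha))=\pi_{N+i}^{-1}(\alpha)=\pi_{N+n}^{-1}(f^{-(n-i)}(\alpha))$, giving your cover identity; surjectivity of $f$ makes each $\pi_m$ a continuous surjection, so minimal subcovers of $\pi_{N+n}^{-1}(\beta)$ and of $\beta$ have equal cardinality (the same fact as Remark (5) in the paper, applied to a map between two different spaces rather than a self-map), which gives $h(\sigma,\pi_N^{-1}(\alpha))=h(f,\alpha)$ and hence $h(f)\le h(\sigma)$; and the refinement step for the reverse inequality is sound, since a basic open set $\bigcap_{i=0}^{N}\pi_i^{-1}(U_i)$ of $\underset{\longleftarrow}{\lim}(X,f)$ collapses to $\pi_N^{-1}\bigl(\bigcap_{i=0}^{N}f^{-(N-i)}(U_i)\bigr)$, compactness extracts a finite refinement, and passing to the maximum index places everything over a common $N$ (surjectivity of $\pi_N$ then guarantees the pushed-forward collection actually covers $X$). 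The one thing you might make explicit is that the argument nowhere needs metrizability, only that $X$ is compact Hausdorff so that the product/inverse-limit space is compact and basic open sets suffice for refinements; as written your proof already respects this.
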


 Suppose $X$ is a compact metric space and $f:X \to X$ is continuous. A point $x$ in $X$ is called a \textit{wandering point} if there is an open set $U$ containing $x$ such that $U \cap (\cup_{m \ne 0, m \in \mathbb{Z}} f^m(U))=\emptyset$. (This is the definition Bowen gives in   \cite{Bo2}.)

\vphantom{}
Today most authors use this definition: (\cite{Walters book}) A point $x$ is called \textit{wandering} for $f$ if there is an open set $U$ containing $x$ such that the sets $f^{-n}(U)$, $n \ge 0$, are mutually disjoint. The proposition below shows that these two definitions are equivalent. (Surely this is known, but do not know where this is shown, so we give a proof.)

\begin{prop}
	Bowen's and Walters' definition of a wandering point are equivalent. 
\end{prop}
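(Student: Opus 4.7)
The plan is to reduce both definitions to a single intermediate condition: there exists an open set $U \ni x$ such that $U \cap f^{-k}(U) = \emptyset$ for every integer $k \ge 1$. Since $f$ need not be invertible, I first fix the convention that $f^m$ for negative $m$ denotes the preimage, i.e.\ $f^{-k}(U) = \{y \in X : f^k(y) \in U\}$, which is how Bowen's expression $f^m(U)$ for $m < 0$ has to be read in this setting.

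To handle Walters' direction, I would observe that pairwise disjointness of $\{f^{-n}(U)\}_{n \ge 0}$ is equivalent to $f^{-n}(U) \cap f^{-m}(U) = \emptyset$ for all $0 \le n < m$. A point $y$ in such an intersection would give $z := f^n(y) \in U \cap f^{-(m-n)}(U)$, so any failure of the intermediate condition transfers to a failure of Walters' condition; conversely, specializing to $n=0$ shows that the intermediate condition alone suffices. Hence Walters' definition is equivalent to the intermediate one.

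For Bowen's direction, I would split the union over $m \ne 0$ into the cases $m = k > 0$ and $m = -k < 0$. The negative part is the intermediate condition directly, by unpacking $f^{-k}(U)$. For the positive part, the identity $U \cap f^k(U) = \emptyset \Leftrightarrow U \cap f^{-k}(U) = \emptyset$ comes from the correspondence $y \leftrightarrow f^k(y)$: any witness $y \in U \cap f^{-k}(U)$ yields $f^k(y) \in U \cap f^k(U)$, and conversely any $z \in U \cap f^k(U)$ has a preimage $y \in U$ with $f^k(y) = z \in U$. Thus Bowen's condition likewise collapses to the intermediate condition, and the two definitions coincide via the same witness $U$.

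The only real subtlety is bookkeeping with the notational convention for negative iterates and the small symmetric trick that swaps $U \cap f^k(U)$ for $U \cap f^{-k}(U)$; beyond that, the argument is purely set-theoretic and uses neither compactness nor metrizability.
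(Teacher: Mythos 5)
Your proposal is correct and is essentially the paper's own argument: both hinge on the same witness correspondence $y \leftrightarrow f^k(y)$ that converts a point of $U \cap f^k(U)$ into a point of $U \cap f^{-k}(U)$ and vice versa, with your intermediate condition just making explicit the common reduction target. If anything, your version is slightly more complete, since you spell out the step reducing a nonempty intersection $f^{-n}(U) \cap f^{-m}(U)$ (for general $0 \le n < m$) to a nonempty $U \cap f^{-(m-n)}(U)$ by applying $f^n$, a detail the paper's proof passes over when it asserts that failure of Walters' condition already yields some $n \ge 1$ with $f^{-n}(U) \cap U \ne \emptyset$.
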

\begin{proof}
	Suppose $X$ is a compact metric space and $f:X \to X$ is continuous.
	
	Let us prove that $\Omega_f^B=\Omega_f^W.$
	
	Suppose $x \in \Omega_f^W.$ Then for every neighbourhood $U$ of $x$ exists $n \geq 1$ such that $f^{-n}(U) \cap U \neq \emptyset.$ But then $ \cup_{m \notin \{0, n\},  m \in \mathbb{Z}} (f^{-m}(U) \cap U) \cup (f^{-n}(U) \cap U) = U \cap (\cup_{m \ne 0, m \in \mathbb{Z}} f^m(U)) \neq \emptyset.$ Hence, $x \in \Omega_f^B$ and so $\Omega_f^W \subseteq \Omega_f^B.$
	
	Let us prove that $\Omega_f^B \subseteq \Omega_f^W.$ Suppose $x \in \Omega_f^B$ and $U$ arbitrary open set containing $x$. Then $U \cap (\cup_{m \ne 0, m \in \mathbb{Z}} f^m(U))\ne\emptyset,$ hence $\cup_{m \ne 0, m \in \mathbb{Z}} (U \cap f^m(U)) \ne\emptyset.$ Therefore, exists $m \in \mathbb Z$ such that $U \cap f^m(U) \ne\emptyset.$ If $m<0$ statement is true, so let us assume that $m>0.$ Then there is some $y \in U \cap f^m(U).$ Since $y \in f^m(U), $ there is $x' \in U$ such that $f^m(x')=y.$ But then, since $f^m(x')=y \in U,$ we have $x' \in {(f^m)}^{-1}(U)=f^{-m}(U)$ so $x' \in U\cap f^{-m}(U) \neq \emptyset $ and therefore $x \in \Omega_f^W.$
	
	Hence, $\Omega_f^B=\Omega_f^W.$
	
\end{proof}

We will call a nonempty open set $U$ with the property that $U \cap (\cup_{m \ne 0, m \in \mathbb{Z}} f^m(U))=\emptyset$ a \textit{simple wandering set}.

\vphantom{}

If a point in $X$ is not a wandering point, we call it a \textit{nonwandering point}. The set of nonwandering points of $X$ under $f$ is denoted $\Omega_f$ or just $\Omega$ if no ambiguity results. Thus, 

$$ \Omega_f= \{ x \in X: \text{ for each open set } U \text{ containing } x,  U \cap (\cup_{m \ne 0, m \in \mathbb{Z}} f^m(U))\ne\emptyset \}.$$

\vphantom{}

The nonwandering set $\Omega_f$ is closed and invariant under $f$, in the sense that $f(\Omega_f) \subset \Omega_f$ (\cite{Walters book}, Theorem 5.6, p 124). Hence, the set of wandering points $X \setminus \Omega_f$ is open in $X$. 

\vphantom{}

\begin{theorem} \label{Bowen1} (See \cite{Bo2})
	Let $f:M\to M$ be a continuous map on a compact metric space. If $\Omega$ is the wandering set of $f,$ then $h(f)=h(f)|\Omega.$
\end{theorem}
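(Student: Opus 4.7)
The inequality $h(f|\Omega) \le h(f)$ is immediate from Remark (4) preceding Theorem \ref{ent lim}: $\Omega$ is closed with $f(\Omega) \subset \Omega$, and the standard proof of that remark applies whenever $T|Y$ is a self-map (surjectivity of $T|Y$ is not really required---any open cover of $\Omega$ is the restriction of an open cover of $X$, and the minimal subcover count only decreases under restriction).

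For the reverse inequality $h(f) \le h(f|\Omega)$, I would fix an arbitrary finite open cover $\alpha$ of $X$ and show $h(f,\alpha) \le h(f|\Omega)$. The strategy is to exploit the fact that orbits outside $\Omega$ are transient and contribute no exponential growth to cover counts. Given a small open neighborhood $V$ of $\Omega$, the compact set $X \setminus V$ consists entirely of wandering points, so it is covered by finitely many simple wandering open sets $U_1, \ldots, U_q$, each chosen to lie in some element of $\alpha$. Refine $\alpha$ to $\beta = \{B_1, \ldots, B_p\} \cup \{U_1, \ldots, U_q\}$, where $\{B_i\}$ refines $\alpha$ and covers some neighborhood of $\overline V$. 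The key observation is that any simple wandering set $U$ satisfies $U \cap f^{-m}(U) = \emptyset$ for every nonzero integer $m$, since $x \in U \cap f^{-m}(U)$ would force $f^m(x) \in f^m(U) \cap U = \emptyset$. Consequently, in any nonempty element $\bigcap_{i=0}^{n-1} f^{-i}(C_i)$ of the join $\bigvee_{i=0}^{n-1} f^{-i}(\beta)$ (with $C_i \in \beta$), each wandering set $U_j$ occupies at most one position, so the number of admissible $\beta$-words of length $n$ is bounded by a polynomial $P(n) \le (n+1)^q q^q$ times the number of admissible pure-$B$ words of length $n$. Since $\log P(n)/n \to 0$, the wandering sets contribute nothing to $h(f,\beta)$.

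The final step is to bound the count of nonempty pure-$B$ intersections in $X$ by the analogous count on $\Omega$ under $f|\Omega$ and the trace cover $\{B_i \cap \Omega\}$. A nonempty intersection $\bigcap_{i=0}^{n-1} f^{-i}(B_{s_i})$ in $X$ witnesses an $n$-step orbit lying in $\bigcup_i B_i$, hence close to $\Omega$ when $V$ is small; using uniform continuity of $f$ together with compactness of $\Omega$, and choosing $V$ small enough relative to a Lebesgue number of $\alpha$, one arranges that every such $X$-orbit pattern is already realized by an $\Omega$-orbit visiting the corresponding $B_i \cap \Omega$'s. This yields $h(f,\alpha) \le h(f,\beta) \le h(f|\Omega, \{B_i \cap \Omega\}) \le h(f|\Omega)$, and taking the supremum over $\alpha$ completes the proof. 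The main obstacle is executing this ``shadowing'' step rigorously within the open-cover framework: the neighborhood $V$ must be chosen fine enough, and possibly an intermediate refinement introduced, to ensure that no admissible pure-$B$ word appears in $X$ that is not also admissible in $\Omega$.
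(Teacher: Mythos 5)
First, a remark on the comparison you asked for: the paper does not prove this statement at all. It is quoted from Bowen \cite{Bo2}, and the authors themselves say later that ``the proof of Bowen's theorem is nontrivial, quite delicate,'' which is precisely why they compute the entropy of the Triangle Example directly instead of invoking it. So your proposal has to stand on its own, and it does not yet.

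The easy half and the first part of your reverse inequality are fine: $h(f|\Omega)\le h(f)$ needs only $f(\Omega)\subset\Omega$, and the observation that a simple wandering set $U$ satisfies $U\cap f^{-m}(U)=\emptyset$ for $m>0$, so that each $U_j$ occupies at most one slot of a nonempty join element, is exactly the right opening move; with routine submultiplicativity bookkeeping it shows the wandering letters contribute only subexponentially. The genuine gap is the final step, and it is not merely a matter of ``executing the shadowing step rigorously'': the claim that every admissible pure-$B$ word in $X$ is realized by an orbit in $\Omega$ is false, and no choice of the neighborhood $V$, Lebesgue number, or modulus of continuity repairs it. Take $X=\{p,q\}\cup\{x_n:n\in\mathbb{Z}\}$ with $p,q$ fixed, $f(x_n)=x_{n+1}$, $x_n\to p$ as $n\to-\infty$ and $x_n\to q$ as $n\to+\infty$. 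Then $\Omega=\{p,q\}$, and for the open sets $B_p=\{p\}\cup\{x_n:n<0\}$ and $B_q=\{q\}\cup\{x_n:n\ge 0\}$, which cover $X$, the word $B_p^{\,k}B_q^{\,n-k}$ is realized by $x_{-k}$ for every $0<k<n$, while no point of $\Omega$ realizes any mixed word. Heteroclinic segments like this lie in arbitrarily small neighborhoods of $\Omega$ except for a bounded number of iterates, so they survive every refinement you propose. What actually has to be proved is the weaker quantitative statement that the number of admissible pure-$B$ words exceeds the corresponding count over $\Omega$ by at most a subexponential factor; that is exactly the delicate core of Bowen's argument, and your outline supplies no mechanism for it. If you want a complete proof by a different route, the standard modern one goes through the variational principle in \cite{Walters book}: Poincar\'e recurrence forces every $f$-invariant Borel probability measure to be supported on $\Omega$, whence $h(f)=\sup_\mu h_\mu(f)=\sup_\mu h_\mu(f|\Omega)=h(f|\Omega)$.
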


\subsection*{Mahavier Products}

The Mahavier product is a useful tool for studying subsets of a generalized inverse limit. (See \cite{GK2} and \cite{BK}.) They have nice topological and algebraic properties. A closed subset of a standard inverse limit (where the bonding maps are single valued functions - so actually a function from the space to itself) is a subinverse limit. \textit{ Unfortunately, a closed subset of a generalized inverse limit need not be a ``subgeneralized inverse limit''}. The Mahavier product allows one to consider closed subsets of the generalized inverse limit space, whether or not they are subgeneralized inverse limits. It also makes it easy to consider ``finite'' generalized inverse limits and their subsets. Suppose $n \ge 2$. Then $\{(x_{0},x_{1},\ldots,x_{n}): x_{i-1} \in f(x_{i}) \text{ for } n \ge i >0 \}$ is what we mean by a \textit{finite generalized inverse limit}. (For standard inverse limits over intervals, these sets are always arcs topologically, and therefore of limited interest. This is not the case for generalized inverse limits.)

%%%%%%%%%%%%end p 5

\vphantom{}

Define the \textit{Mahavier product} \cite{GK2} as follows:

Suppose $X$, $Y$, and $Z$ are sets. Suppose $A \subset X \times Y$ and $B \subset Y \times Z$. Define 
\begin{equation*}
   A \star B =\{(x,y,z ): (x,y) \in X \times Y, \text{ and } (y,z) \in Y \times Z \}
\end{equation*}

\noindent to be the \textit{Mahavier product} of $A$ and $B$. Thus, $A \star B \subset X \times Y \times Z$. If $A=\{(a,b) \}$ and $B=\{(b,c) \}$, then we write $(a,b) \star (b,c)$ to mean $A \star B=\{(a,b) \} \star \{(b,c) \} $. Note that $A \star B=(A \times Z) \cap (X \times B)$. In the definition, the factor space $Y$ is acting as the ``link up'' factor; that is, if $a \in A$, then there is a point $c \in A \star B$ such that the projection of $c$ to $X \times Y$  is $a$ only if some point $b \in B$ has its projection to $Y$ the same as the projection of $a$ to $Y$. In particular, if the projection of $A$ to $Y$ does not intersect the projection of $B$ to $Y$, then $A \star B = \emptyset$.

\vphantom{}

If, in addition, $W$ is a set and $C \subset Z \times W$, we can form the Mahavier products $(A \star B) \star C$ and $A \star (B \star C)$. One of the nice algebraic properties of Mahavier products is that they are associative. Hence, $(A \star B) \star C=A \star (B \star C)$, and we can just write $A \star B \star C$ without ambiguity. Also, if $A,B$ are open (closed), then $A\star B$ is open (closed).

\vphantom{}

Now suppose that for each $i \ge 0$, $X_{i}$ is a set, and $A_{i} \subset  X_{i-1} \times  X_{i}$. Then, using induction, we can form the Mahavier products 
$$A_{1} \star A_{2} \star \cdots \star A_{n} \subset X_{0} \times \cdots X_{n} $$ 
for each positive integer $n$. We often write $\star_{i=1}^{n}A_{i}$ for $A_{1} \star A_{2} \star \cdots \star A_{n}$. If $A_i=A,  1 \leq i \leq n $ then $$\overbrace{ A \star A \star A \star \cdot \cdot \cdot \star A}^{n}= \star_{i=1}^{n}A.$$ We also define $A_{1} \star A_{2} \star \cdots$ to be the set of points $x=(x_{0},x_{1}, \ldots) \in \Pi_{i=0}^{\infty}X_{i}$ such that $(x_{i-2},x_{i-1},x_{i}) \in A_{i-1} \star A_{i}$ for each $i \ge 2$,  and we write $\star_{i=1}^{\infty}A_{i}$ for  $A_{1} \star A_{2} \star \cdots$. Note that if $A_i$ is closed in $X_{i-1} \times X_{i}$, then $\star_{i=1}^{\infty}A_{i}$ is closed in $\Pi_{i=0}^{\infty}X_i$.

\vphantom{}

Note that if $f:I \to 2^{I}$ is a bonding function with graph $\Gamma(f)$, and $G:=(\Gamma(f))^{-1}= \{(y,x): (x,y) \in \Gamma(f) \}$, then 

\begin{equation*}
  M=\underset{\longleftarrow }{\lim } \text{ } f= G \star G \star G \star \cdots = \star^{\infty}_{i=1}G.
\end{equation*}

\vphantom{}

In the sequel, we consider Mahavier products of the form $G\star H$ when $G\subset I^N$ and $H\subset I^N.$ There is some ambiguity about what the set $G\star H$ is in this situation. In this paper we always mean the following: Write $I^N$ as $I^{N-1}\times I$ for $G$ and $I^N$ as $I\times I^{N-1}$ for $H.$ Then $G\subset I^{N-1}\times I$ and $H\subset I\times I^{N-1}$ and
$$
G\star H = \left\{  (x,y,z):  (x,y)\in G \text{ and } (y,z)\in H, \text{ where } x\in I^{N-1}, y\in I, z \in  I^{N-1}  \right\}.
$$

\vphantom{}

%%%%%%%%%%%%%%%end p 6

%%%%%%%%%%%%%%%%%%%%%%%%%%%%%%%%%%%%%%

\section{Preliminary Results}

While we are mostly interested in closed sets that are the graphs of upper semicontinuous functions from $I$ to $2^I$, we do not need for our closed set to be such a graph in order to define its topological entropy. Also, often, the entropy of such a graph is determined by a closed subset of the graph - sometimes a finite subset of the graph. Moreover, in order to discuss the entropy properties of closed subsets of $I^2$, we need to define the topological entropy of closed subsets of $I^n$ for $n$ a positive integer greater than $1$.

\vphantom{}

Before we define topological entropy for closed subsets of $[0,1]^n$, we need some background information on the closed sets and open covers we are using.

\vphantom{}

The following examples demonstrate that if $G$ is a closed subset of $[0,1]^{n}$, $n>1$ , then (1) it may be the case that $\mathbf{G}:= \star_{i=1}^{\infty}G= \emptyset$ (and that $G$ is of limited interest), and (2) even if $\mathbf{G}:= \star_{i=1}^{\infty}G \ne \emptyset$, it may be the case that $\sigma(\mathbf{G}) \ne \mathbf{G}$.

\vphantom{}

\begin{exam}
Suppose $G$ is the closed subset $[\frac{2}{3},1] \times [0,\frac{1}{3}]$ of $I_{0} \times I_{1}$. Then $\mathbf{G}:= \star_{i=1}^{\infty}G= \emptyset$. In fact, $G \star G= \emptyset$. (It is equally easy to construct empty examples in higher dimensions.)

\end{exam}

\begin{exam}
Let $L_{0}=I_{0} \times \{p\}$ and $L_{1}=I_{0} \times \{q\}$, where $0 \le p < q \le 1$.Suppose $G$ is the closed subset $L_{0} \cup L_{1}$ of $I_{0} \times I_{1}$. Then $\mathbf{G}:= \star_{i=1}^{\infty}G$ is a Cantor set of arcs, and $\sigma(\mathbf{G})$ is a Cantor set and is a proper subset of $\mathbf{G}$. 

\vphantom{}

\begin{proof}
Let $C=\{s=(s_{1},s_{2}, \ldots):s_{i} \in \{p,q\} \text{ for each } i>0 \}$. Then $C$ is a Cantor set contained in $\mathbf{G}$. Moreover, for each $s \in C$, $I_{0} \times \{s\}$ is an arc contained in $\mathbf{G}$, and $\mathbf{G}=\cup \{I_{0} \times \{s\}: s \in C \}$. Hence, $\mathbf{G}$ is a Cantor set of arcs. Since $\sigma(\mathbf{G})=C$, $C$ is a proper subset of $\mathbf{G}$.

\end{proof}

\end{exam}

\begin{exam} Suppose $0 \le p <q \le 1$, $L_p=I_0 \times I_1 \times \{p\}$, and $L_q=I_0 \times I_1 \times \{q\}$. Then if $s,t \in \{p,q\}$, $L_s \star L_t=I^2 \times \{s\} \times I \times \{t\}$. If $s=s_1,s_2, \ldots$ is a sequence each member of which is either $p$ or $q$, let $M_s=L_{s_1} \star L_{s_2} \star \cdots=I^2 \times \{s_1\} \times I \times \{s_2\} \times I \times \{s_3\} \cdots$. If $G = L_p \cup L_q$, then $G$ is a closed subset of $I^3$, and $\star_{i=1}^{\infty}G=\cup \mathcal{M}$, where $\mathcal{M} = \{M_s: s=s_1,s_2, \ldots \text{ is a sequence each member of which is either } p \text{ or } q \}$. Each $M_s$ is homeomorphic to the Hilbert cube, so if $M = \star_{i=1}^{\infty}G$, then $M$ is topologically a Cantor set of Hilbert cubes (with $M_s \cap M_t = \emptyset$ when $s,t$ are different sequences of $p$'s and $q$'s). Note that because $G \subset I_0 \times I_1 \times I_2$, $\sigma(M)$ is not a subset of $M$, but $\sigma^2(M) \subset M$. Let $M'=\cup\{s_1 \times I \times s_2 \times I \times s_3 \times \cdots : s_1,s_2,\ldots \text{ is a sequence each member of which is either  } p \text{ or } q\}$. Then $\sigma^2(M) =M' \subset M$, but $M' \ne M$. However, $\sigma^2(M')=M'$.

\end{exam}

\vphantom{}

The following propositions give some natural conditions under which $\star_{i=0}^{\infty} G \ne \emptyset$ for $G$ a closed subset of $[0,1]^n$ ($n>1$).

\begin{prop} \label{prop5.31} If $G$ is a nonempty closed subset of $I_{0} \times I_{1}$, then $\mathbf{G}=\star_{i=1}^{\infty}G \ne \emptyset$ if and only if for every integer $m \geq 2,$ $\star_{i=1}^{m}G \neq \emptyset$. If $G$ is a nonempty closed subset of $I_0\times I_1 \times \cdots \times I_n,$ then $\mathbf{G}=\star_{i=1}^{\infty}G \ne \emptyset$ if and only if for every integer $m \geq 2,$ $\star_{i=1}^{mn}G \neq \emptyset.$
\end{prop}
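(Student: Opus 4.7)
The plan is to handle both directions, with essentially all the content concentrated in the converse, which is a sequential compactness extraction in the Hilbert cube. For the forward direction, if $\mathbf{G} \neq \emptyset$, I would pick any $x=(x_0,x_1,\ldots) \in \mathbf{G}$; by the very definition of the infinite Mahavier product, the initial tuple $(x_0,\ldots,x_{mn})$ satisfies every local constraint required to lie in $\star_{i=1}^{mn} G$, so $\star_{i=1}^{mn} G \neq \emptyset$ for every $m \geq 2$. The $n=1$ case is the specialization.

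For the converse, assume $\star_{i=1}^{mn} G \neq \emptyset$ for every $m \geq 2$. Note this hypothesis is equivalent to $\star_{i=1}^{k} G \neq \emptyset$ for every $k \geq 2$, because $\{mn : m \geq 2\}$ is cofinal in $\mathbb{N}$ and $\star_{i=1}^{k+1} G \neq \emptyset$ implies $\star_{i=1}^{k} G \neq \emptyset$ by projecting off the last $n$ coordinates. For each $m \geq 2$ I would pick $p^{(m)} \in \star_{i=1}^{m} G \subset I^{mn+1}$, and extend it to a point $\tilde p^{(m)} \in I^{\infty}$ by appending $0$'s past coordinate $mn$. Since $I^{\infty}$ is compact in the metric $d$ defined in Section 2, some subsequence $\tilde p^{(m_k)}$ converges to a limit $p^* \in I^{\infty}$, and convergence in $d$ is equivalent to coordinatewise convergence.

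It remains to verify $p^* \in \mathbf{G}$. Fix $i \geq 1$. For all $k$ with $m_k \geq i$, the definition of $\star_{i=1}^{m_k} G$ gives $(\tilde p^{(m_k)}_{(i-1)n}, \tilde p^{(m_k)}_{(i-1)n+1}, \ldots, \tilde p^{(m_k)}_{in}) \in G$; letting $k \to \infty$, coordinatewise convergence together with the closedness of $G$ in $I^{n+1}$ force $(p^*_{(i-1)n}, \ldots, p^*_{in}) \in G$. Since $i$ was arbitrary, $p^* \in \star_{i=1}^{\infty} G = \mathbf{G}$. I do not anticipate any real obstacle; the only point that needs care is the indexing for the higher-dimensional Mahavier product, where each new factor of $G$ contributes $n$ fresh coordinates past the single ``link'' coordinate, so that $\star_{i=1}^{m} G$ naturally sits in $I^{mn+1}$ and the $i$-th local constraint reads exactly $(x_{(i-1)n}, \ldots, x_{in}) \in G$.
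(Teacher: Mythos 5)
Your proof is correct, but it takes a genuinely different route from the paper's. The paper argues the converse by a direct inductive construction: using the observation that the projection of a point of $\star_{i=1}^{m}G$ onto its first $m$ coordinates lies in $\star_{i=1}^{m-1}G$, it builds the coordinates $x_0,x_1,\ldots$ one at a time, extending a chosen point of $\star_{i=1}^{k}G$ to one of $\star_{i=1}^{k+1}G$ at each stage. You instead choose one witness $p^{(m)}\in\star_{i=1}^{m}G$ for each $m$, pad with zeros, extract a convergent subsequence in the compact metric space $I^{\infty}$, and verify each local constraint on the limit using coordinatewise convergence and the closedness of $G$. Your compactness argument is the more robust of the two: the greedy induction has the well-known pitfall that a particular point of $\star_{i=1}^{k}G$ need not extend to a point of $\star_{i=1}^{k+1}G$ (the nonemptiness of $\star_{i=1}^{k+1}G$ only guarantees that \emph{some} point of $\star_{i=1}^{k}G$ extends, not the one already chosen), so to make the induction airtight one must either track the set of prefixes extendable to every length or invoke exactly the kind of compactness you use. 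Your subsequence argument sidesteps this issue entirely, at the cost of being slightly less constructive. You also spell out the indexing for the $I^{n+1}$ case, where the $i$-th constraint reads $(x_{(i-1)n},\ldots,x_{in})\in G$, and the reduction of the hypothesis ``$\star_{i=1}^{mn}G\ne\emptyset$ for all $m\ge 2$'' to nonemptiness of all finite products via cofinality and projection; the paper omits both of these, stating only that the second case is similar.
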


%%%%%%%%%%%%%%%end p 7

\begin{proof}
	If $\mathbf{G}=\star_{i=1}^{\infty}G \ne \emptyset$ it follows from the definition that $\star_{i=1}^{m}G \neq \emptyset, \forall m\in \mathbb N.$
	Now, suppose $\star_{i=1}^{m}G \neq \emptyset$ for every integer $m \geq 2.$ We will inductively define a point in $\mathbf{G}=\star_{i=1}^{\infty}G.$
	First observe the following: 
	If $(x_0,\ldots, x_{m-1},x_m) \in \star_{i=1}^{m}G $ for some integer $m\geq 2$ then $(x_0,\ldots, x_{m-1}) \in \star_{i=1}^{m-1}G. \quad (*)$

	For $m=2$ we have $G \star G \neq \emptyset$  so from the above it follows that there is a point $(x,y) \in G$ and $z \in [0,1]$ such that $(x,y,z) \in G \star G.$
	
	Now, for given $m=k$ we have that $ \star_{i=1}^k G \neq \emptyset.$ So, there are points $(x_0,x_1,\ldots,x_k)$ $ \in \star_{i=1}^k G$ and $x_{k+1} \in [0,1]$ such that $(x_0,x_1,\ldots,x_k,x_{k+1}) \in \star_{i=1}^{k+1} G.$ This follows from $(*)$ and assumption $ \star_{i=1}^{k+1} G \neq \emptyset.$
	
	Therefore we have constructed a sequence $x_0,x_1,\ldots, x_k,\ldots,$ such that for each positive integer $m\geq 2, (x_0,x_1,\ldots,x_m) \in  \star_{i=1}^m G $ i.e. $(x_0,x_1,\ldots,x_k,\ldots) \in \star_{i=1}^{\infty}G.$
	Therefore we have $\mathbf{G}=\star_{i=1}^{\infty}G \ne \emptyset.$
	The proof of the second statement is similar so we omit it.
\end{proof}

 \begin{prop}
 	Let $G$ be a nonempty closed subset of $I_{0} \times I_{1}.$ If there is some point $(x,y)\in G$ such that $(y,x)$ is also in $G,$ then $\mathbf{G} =\star_{i=1}^{\infty}G \ne \emptyset.$ If $G$ is a nonempty closed subset of $\Pi_{i=0}^n I_i$ and if there is some point $(x_0,\ldots,x_n)\in G$ such that $(x_n,\ldots,x_0)$ is also in $G,$ then  $\mathbf{G} =\star_{i=1}^{\infty}G \ne \emptyset.$
 \end{prop}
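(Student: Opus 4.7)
The plan is to prove both statements at once by exhibiting an explicit periodic point of $\mathbf{G}$; the two-dimensional assertion is the $n=1$ special case of the higher-dimensional one. Recall that for $G\subset \Pi_{i=0}^{n}I_{i}$, a point $(y_{0},y_{1},\ldots)\in I^{\infty}$ lies in $\star_{i=1}^{\infty}G$ precisely when every block $(y_{kn},y_{kn+1},\ldots,y_{(k+1)n})$, $k\geq 0$, belongs to $G$, since successive copies of $G$ in the Mahavier product overlap in exactly one coordinate.

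Given $(x_{0},x_{1},\ldots,x_{n})\in G$ and $(x_{n},x_{n-1},\ldots,x_{0})\in G$, I would set $\mathbf{z}=(z_{0},z_{1},z_{2},\ldots)$ to be the sequence of period $2n$ whose values on one period are
$$x_{0},\,x_{1},\,\ldots,\,x_{n-1},\,x_{n},\,x_{n-1},\,\ldots,\,x_{1}.$$
In other words, the sequence \emph{bounces} between $x_{0}$ and $x_{n}$, running forward through the tuple, then backward, then forward, and so on. For $n=1$ this specializes to $(x,y,x,y,\ldots)$, and every consecutive pair is either $(x,y)$ or $(y,x)$, both of which lie in $G$.

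The verification in general is now mechanical: for each $k\geq 0$, the block $(z_{kn},z_{kn+1},\ldots,z_{(k+1)n})$ equals $(x_{0},x_{1},\ldots,x_{n})$ when $k$ is even and $(x_{n},x_{n-1},\ldots,x_{0})$ when $k$ is odd, and both tuples lie in $G$ by hypothesis. The gluing is automatic: at positions $kn$ with $k$ odd the value is $x_{n}$, which is simultaneously the end of the preceding ascending block and the start of the following descending block, and at positions $kn$ with $k$ even and positive the value is $x_{0}$, which plays the analogous role between descending and ascending blocks.

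There is really no obstacle here; the only care needed is in the indexing bookkeeping, to confirm that the two block patterns really do alternate starting at every multiple of $n$ and not at some off-period position. Once this is written out carefully, both statements of the proposition follow directly from the definition of the infinite Mahavier product.
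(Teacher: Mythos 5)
Your proof is correct and rests on the same key idea as the paper's: the alternating (``bouncing'') sequence built from $(x_0,\ldots,x_n)$ and its reversal, which for $n=1$ is just $(x,y,x,y,\ldots)$. The only difference is cosmetic --- the paper checks that each finite product $\star_{i=1}^{m}G$ is nonempty and then invokes the preceding proposition, whereas you exhibit the infinite periodic point directly; both are valid, and your version has the minor merit of writing out the general-$n$ block verification that the paper omits as ``similar.''
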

 \begin{proof}
 	Let $(x,y)\in G$ such that $(y,x)\in G.$ Then, for each integer $m\geq 2$ we have that $(x,y,x,y,\ldots,x) \in \star_{i=1}^{m}G$ if $m$ is even or $(x,y,x,\ldots,y) \in \star_{i=1}^{m}G$ if $m$ is odd. In both cases $\star_{i=1}^{m}G \ne \emptyset$ and therefore, from the previous proposition it follows that $\mathbf{G}=\star_{i=1}^{\infty}G \ne \emptyset.$ 
 	
 	The proof of the second statement is similar and we omit it.
 \end{proof}

 	\begin{corollary} If $G=G^{-1}\neq \emptyset,$ where $G^{-1}=\left \{ (y,x): (x,y) \in G \right  \}$ is a nonempty closed subset of $I_{0} \times I_{1}$, then $\mathbf{G}=\star_{i=1}^{\infty}G \ne \emptyset$. 
 		
 	\end{corollary}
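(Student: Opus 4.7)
The plan is to derive this as an immediate consequence of the preceding proposition. Since $G = G^{-1}$ and $G$ is nonempty, I can pick any point $(x,y) \in G$. The definition of $G^{-1}$ says that $(y,x) \in G^{-1}$, and since $G^{-1} = G$, we get $(y,x) \in G$ as well. So $G$ contains a point $(x,y)$ such that $(y,x)$ is also in $G$, which is precisely the hypothesis of the previous proposition. Applying that proposition immediately yields $\mathbf{G} = \star_{i=1}^{\infty} G \neq \emptyset$.

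The only subtle point (which is really a sanity check rather than an obstacle) is that we need the nonemptiness of $G$ to supply the seed point $(x,y)$ whose reversal also lies in $G$; this is why the hypothesis explicitly states $G \neq \emptyset$. Beyond that, there is no real work to do: the statement is essentially a reformulation of the hypothesis of the prior proposition in terms of the symmetry condition $G = G^{-1}$. I do not anticipate any obstacle.

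For completeness, one could also give a direct argument without citing the proposition: given $(x,y) \in G = G^{-1}$, one observes that the alternating sequence $(x,y,x,y,x,y,\ldots) \in \Pi_{i=0}^{\infty} I_i$ satisfies $(x_{i-1},x_i) \in G$ for every $i \geq 1$, since each consecutive pair is either $(x,y) \in G$ or $(y,x) \in G$. Hence this sequence lies in $\star_{i=1}^{\infty} G$, so $\mathbf{G} \neq \emptyset$. But invoking the previous proposition is cleaner and is the natural way to present this as a corollary.
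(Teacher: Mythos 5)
Your proposal is correct and follows exactly the paper's own argument: take $(x,y)\in G$, use $G=G^{-1}$ to conclude $(y,x)\in G$, and apply the preceding proposition. The supplementary direct argument you sketch (the alternating point $(x,y,x,y,\ldots)$) is also how that proposition itself is proved, so nothing is missing.
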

 	
 	\begin{proof} Since $G \ne \emptyset$, there is some point $(x,y) \in G$. Since $G=G^{-1}$, the point $(y,x) \in G$. From previous proposition it follows that $\mathbf{G} \ne \emptyset$.
 	\end{proof}
 	
 	\begin{prop}
 	If $G$ is a nonempty closed subset of $\Pi_{i=0}^n I_i$ and $\pi_n(G) \subset \pi_0(G)$, then $\mathbf{G}=\star_{i=1}^{\infty}G \ne \emptyset$.
 	\end{prop}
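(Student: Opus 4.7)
The plan is to reduce to Proposition \ref{prop5.31} by showing that every finite Mahavier power $\star_{i=1}^m G$ is nonempty, and to produce the required points by repeatedly ``linking'' tuples from $G$ using the hypothesis $\pi_n(G)\subset\pi_0(G)$. Since $G$ is nonempty, pick any initial point $p^{(1)}=(x_0^{(1)},x_1^{(1)},\ldots,x_n^{(1)})\in G$. The last coordinate $x_n^{(1)}$ lies in $\pi_n(G)\subset\pi_0(G)$, so by definition of $\pi_0(G)$ there exists a point $p^{(2)}=(x_0^{(2)},x_1^{(2)},\ldots,x_n^{(2)})\in G$ with $x_0^{(2)}=x_n^{(1)}$. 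Under the Mahavier-product convention recalled in the Background section (writing $I^{n+1}$ as $I^n\times I$ for the left factor and as $I\times I^n$ for the right factor), the concatenation
\[
\bigl(x_0^{(1)},x_1^{(1)},\ldots,x_{n-1}^{(1)},x_n^{(1)}=x_0^{(2)},x_1^{(2)},\ldots,x_n^{(2)}\bigr)
\]
lies in $G\star G$.

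Next, I would iterate this linking procedure. Suppose points $p^{(1)},\ldots,p^{(k)}\in G$ have been chosen so that the last coordinate of $p^{(j)}$ equals the first coordinate of $p^{(j+1)}$ for each $j<k$. Then $x_n^{(k)}\in\pi_n(G)\subset\pi_0(G)$, so we can choose $p^{(k+1)}\in G$ whose initial coordinate is $x_n^{(k)}$. By induction on $k$ and associativity of $\star$, the concatenated tuple of length $kn+1$ belongs to $\star_{i=1}^{k}G$, so $\star_{i=1}^{k}G\neq\emptyset$ for every positive integer $k$.

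Finally, apply the second part of Proposition \ref{prop5.31}: since $\star_{i=1}^{mn}G\neq\emptyset$ for every $m\geq 2$ (indeed for every positive integer), one concludes $\mathbf{G}=\star_{i=1}^{\infty}G\neq\emptyset$. Alternatively, the infinite concatenation of the points $p^{(k)}$ constructed above already gives, by definition, a point of $\star_{i=1}^{\infty}G$, bypassing Proposition \ref{prop5.31} entirely; both routes are short.

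There is no real obstacle in this argument; the only thing to be careful about is the bookkeeping of the Mahavier-product convention for higher-dimensional factors, namely that the ``link-up'' coordinate is the last coordinate of one tuple and the first coordinate of the next, which is exactly where the hypothesis $\pi_n(G)\subset\pi_0(G)$ enters.
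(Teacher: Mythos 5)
Your proposal is correct and follows essentially the same route as the paper: starting from an arbitrary point of $G$, repeatedly using $\pi_n(G)\subset\pi_0(G)$ to link the last coordinate of each tuple to the first coordinate of the next, and concatenating to obtain a point of $\star_{i=1}^{\infty}G$ (the paper builds the infinite point directly rather than passing through Proposition \ref{prop5.31}, but you note this shortcut yourself). No gaps.
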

 	
 \begin{proof}
 	The set $G \ne \emptyset$, so there is some $(x_0,\ldots,x_n) \in G$. Since $\pi_n(G) \subset \pi_0(G)$, $x_n \in \pi_0(G)$, there are points $x_{n+1}, x_{n+2},\ldots,x_{2n}$ in $I$ such that $(x_n,x_{n+1},\ldots,x_{2n}) \in G$. Thus $(x_0,\ldots,x_n,\ldots,x_{2n})$ $\in G \star G$. Then since $x_{2n} \in \pi_0(G)$, there is $(x_{2n},\ldots,x_{3n}) \in G$ such that $(x_0,\ldots,x_n,\ldots,$ $x_{2n},\ldots,x_{3n}) \in G \star G \star G$, and we can continue this process indefinitely, obtaining a point in $\mathbf{G}$.
 \end{proof}

\begin{prop} \label{Gnonempty} Suppose $n$ is a positive integer. If $G$ is a closed subset of $I_{0} \times I_{1}$ that contains a finite set of  points $\{(x_{0},x_{1}),(x_{1},x_{2}), \ldots , (x_{n-1},x_{n}),(x_{n},x_{0} )\}$, then $\mathbf{G}=\star_{i=1}^{\infty}G \ne \emptyset$. Furthermore, $\mathbf{G}$ contains a point of period $n$ under the action of $\sigma$.

\end{prop}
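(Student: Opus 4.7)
The plan is to construct an explicit periodic point of $\mathbf{G}$ by unrolling the given cyclic chain. First I would observe that the hypothesis supplies a sequence $x_0, x_1, \ldots, x_n$ in $I$ such that each of the consecutive pairs $(x_0,x_1), (x_1,x_2), \ldots, (x_{n-1}, x_n)$, together with the closing pair $(x_n, x_0)$, lies in $G$. Define a point $p = (p_0, p_1, p_2, \ldots) \in I^{\infty}$ coordinatewise by setting $p_k = x_{k \bmod (n+1)}$ for every $k \geq 0$. This makes $p$ periodic by construction, with $p_{k+(n+1)} = p_k$ for all $k$.

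Next I would verify that $p \in \mathbf{G}$. For each $k \geq 0$, write $k = q(n+1) + j$ with $0 \leq j \leq n$. Then the consecutive pair $(p_k, p_{k+1})$ equals $(x_j, x_{j+1})$ when $0 \leq j \leq n-1$, and equals $(x_n, x_0)$ when $j = n$. In either case this pair is one of the given points of $G$, so every consecutive pair of coordinates of $p$ belongs to $G$, which means $p \in \star_{i=1}^{\infty} G = \mathbf{G}$. In particular, $\mathbf{G} \neq \emptyset$.

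For the periodicity claim, since $p_{k+(n+1)} = p_k$ for all $k$, the shift satisfies $\sigma^{n+1}(p) = p$, so $p$ is periodic under $\sigma$ with period dividing $n+1$ (the exact period equals $n+1$ if the $x_i$ are distinct and appropriately chosen, and is a proper divisor otherwise). I note that the statement ``period $n$'' in the proposition naturally corresponds to the cycle-length determined by the $n+1$ given edges of the chain, and the construction above produces a shift-periodic point whose period is this cycle-length.

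There is no real obstacle here: the construction is direct, and the only detail requiring attention is the modular bookkeeping to confirm that each consecutive pair $(p_k, p_{k+1})$ lands on one of the $n+1$ edges provided in the hypothesis. Alternatively, one could deduce nonemptiness via Proposition \ref{prop5.31} by showing $\star_{i=1}^{m}G \neq \emptyset$ for all $m \geq 2$ using truncations of the cyclic sequence, but this route does not yield the periodic point directly and so the explicit construction above is preferable.
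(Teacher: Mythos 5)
Your proposal is correct and is essentially the paper's own proof: the paper takes the same periodic point $(x_0,x_1,\ldots,x_n,x_0,\ldots,x_n,\ldots)$ and the same verification that consecutive coordinate pairs lie in $G$. Your observation that the shift-period is $n+1$ (or a divisor thereof), not $n$, is accurate and in fact exposes an off-by-one slip in the paper, whose own chain $\sigma(z_i)=z_{i+1}$ for $0\le i<n$ together with $\sigma(z_n)=z_0$ yields $\sigma^{n+1}(z_0)=z_0$ rather than the claimed $\sigma^{n}(z_0)=z_0$.
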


\begin{proof} The point $(x_{0},x_{1}, \ldots, x_{n},x_{0}, \ldots, x_{n}, \ldots) \in \mathbf{G}$, so $\mathbf{G} \ne \emptyset$. Let $y_{0}=(x_{0}, \dots, x_{n})$ and $y_{n}=(x_{n},x_{0}, \ldots x_{n-1})$. For each $0 < i <n$, let $y_{i}=(x_{i}, \ldots, x_{n},x_{0}, \ldots x_{i-1})$. For $0 \le i \le n$, let $z_{i}=y_{i} \oplus y_{i} \oplus y_{i} \oplus \ldots $ . Then each $z_{i} \in \mathbf{G}$, and $\sigma(z_{i})=z_{i+1}$ for $0 \le i <n$, and $\sigma(z_{n})=z_{0}$. Hence, $\sigma^{n}(z_{0})=z_{0}$.

\end{proof}

\begin{prop} 
 If $G$ is a nonempty closed subset of $I_{0} \times I_{1}$ and $\mathbf{G}=\star_{i=1}^{\infty}G \ne \emptyset$, then $\sigma(\mathbf{G}) \subset \mathbf{G}$.  If $G$ is a nonempty closed subset of $I_{0} \times I_{1} \times \cdots \times I_n$ and $\mathbf{G}=\star_{i=1}^{\infty}G \ne \emptyset$, then $\sigma^n(\mathbf{G}) \subset \mathbf{G}$. 

\end{prop}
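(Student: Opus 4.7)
The plan is to unpack the definition of $\star_{i=1}^{\infty} G$ into an explicit coordinate-by-coordinate description and then verify, by reindexing, that the defining constraints of $\mathbf{G}$ are preserved under the appropriate shift. The key observation is that in both cases the defining condition on a point $x$ consists of a family of membership requirements on consecutive ``blocks'' of coordinates of $x$, and applying $\sigma$ (respectively $\sigma^n$) simply discards the first block, leaving the tail of the family, which is automatically still satisfied.

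For the two-dimensional case $G \subset I_0 \times I_1$, the definition given in the paper says $x = (x_0, x_1, x_2, \ldots) \in \mathbf{G}$ precisely when $(x_{i-1}, x_i) \in G$ for every $i \geq 1$. Writing $y = \sigma(x) = (x_1, x_2, \ldots)$, the condition $y \in \mathbf{G}$ becomes $(y_{i-1}, y_i) = (x_i, x_{i+1}) \in G$ for every $i \geq 1$, which is the subfamily of the original constraints indexed by $i \geq 2$. Thus $\sigma(\mathbf{G}) \subset \mathbf{G}$.

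For $G \subset I_0 \times \cdots \times I_n$, I would first verify by induction on the number of factors, using associativity of $\star$ together with the paper's explicit rule that the product of two subsets of $I^{n+1}$ overlaps in a single coordinate, that $x = (x_0, x_1, \ldots) \in \mathbf{G}$ if and only if $(x_{kn}, x_{kn+1}, \ldots, x_{kn+n}) \in G$ for every $k \geq 0$. Setting $y = \sigma^n(x)$ so that $y_j = x_{j+n}$, the condition $y \in \mathbf{G}$ becomes $(y_{kn}, \ldots, y_{kn+n}) = (x_{(k+1)n}, x_{(k+1)n+1}, \ldots, x_{(k+1)n+n}) \in G$ for every $k \geq 0$, which is the original family of constraints with $k$ replaced by $k+1$, and so is automatically satisfied. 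Hence $\sigma^n(\mathbf{G}) \subset \mathbf{G}$.

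The only place that requires real care is establishing the block description of $\mathbf{G}$ in the higher-dimensional setting, since the paper's explicit coordinate formulation (via $(x_{i-2}, x_{i-1}, x_i) \in A_{i-1} \star A_i$) is written for two-dimensional factors. The indexing of the shift $\sigma^n$, rather than $\sigma$, is then forced by the fact that consecutive blocks overlap in exactly one coordinate, so advancing by one whole block requires moving the starting index by $n$; this is also consistent with Example 3, where the authors observe that $\sigma(M) \not\subset M$ but $\sigma^2(M) \subset M$ for the $3$-dimensional $G = L_p \cup L_q$.
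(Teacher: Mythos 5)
Your proposal is correct and follows essentially the same route as the paper: both arguments simply observe that a point of $\mathbf{G}$ is defined by a family of block conditions $(x_{kn},\ldots,x_{(k+1)n})\in G$ and that applying $\sigma$ (resp. $\sigma^n$) reindexes this family by dropping the first condition. The paper gives exactly this reindexing for the case $n=1$ and declares the general case ``similar,'' so your extra care with the block description of $\star_{i=1}^{\infty}G$ for $G\subset I^{n+1}$ is a welcome but not divergent elaboration.
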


%%%%%%%%%%%%%%%end p 8

\begin{proof}
We prove the first statement. The proof for the second statement is similar. Suppose $x=(x_{0},x_{1}, \dots) \in \sigma(\mathbf{G})$. Then there is $y=(y_{0},y_{1}, \ldots) \in \mathbf{G}$ such that $\sigma(y)=x$. Now $\sigma(y) = (y_{1},y_{2}, \ldots)=x$, so $x_{i-1}=y_{i}$ for each $i>0$. Since $y \in \mathbf{G}$, for each $i>0$, $(y_{i-1},y_{i}) \in G$. Then for each $i>1$, $(y_{i-1},y_{i})=(x_{i-2},x_{i-1}) \in G$. Then $x \in \mathbf{G}$.
\end{proof}

Proposition \ref{Gnonempty} can be generalized to $G\subset I^{n+1}, n>1$, too. First we give an example, then state the more general proposition.

\begin{exam}
	Suppose $G$ is a nonempty and closed subset of $ I^{2+1}$ such that \\
	$\left\{ (x_0,x_1,x_2), (x_2,x_3,x_4), (x_4,x_5,x_6) \right\} \subset G$ and $x_6=x_0.$ Then $(x_0,x_1,x_2,x_3,x_4,x_5,x_6)=(x_0,x_1,x_2,x_3,x_4,x_5,x_0) \in G\star G\star G$ and the point $y_0=(x_0,x_1,x_2,x_3,x_4,x_5) \oplus (x_0,x_1,x_2,x_3,x_4,x_5) \oplus \ldots \in \star_{i=1}^{\infty}G =\mathbf{G} $ and $\mathbf{G} \ne \emptyset.$
	Now $\sigma^2 (\mathbf{G}) \subset \mathbf{G}$ and 
	\begin{align*}
	&\sigma^2(y_0)=(x_2,x_3,x_4,x_5,x_0,x_1,\ldots,x_5,x_0,\ldots):=y_1\\
	&\sigma^2(y_1)=(x_4,x_5,x_0,x_1,\ldots,x_5,x_0,\ldots):=y_2\\
	&\sigma^2(y_2)=(x_0,x_1,\ldots,x_5,x_0,\ldots)=y_0
	\end{align*}
Therefore, $y_0$ is a point in $\mathbf{G}$ and $\left( \sigma^2 \right)^3(y_0)=y_0.$ So $y_0$ is a point in $\mathbf{G}$ of period $3$ under the action of $\sigma^2.$
\end{exam}
\begin{prop}
	Suppose $G$ is a closed subset of $I^{n+1}, n\in \mathbb N,$ and $p\in \mathbb N,p>1.$ If $y_i=(x_{in},\ldots,x_{(i+1)n})\in G$ for $0\leq i\leq p-1,$ and $x_{pn}=x_0,$ then $ z_0= y_0 \star y_1 \star \ldots \star y_{p-1} \star y_0 \star y_1 \star \ldots \star y_{p-1} \star \ldots \in \mathbf{G}=\star_{i=1}^{\infty}G \ne \emptyset$, and $z_0$ is a period $p$ point under the action of $\sigma^n$ in $\mathbf{G}.$
\end{prop}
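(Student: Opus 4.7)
The statement is essentially a higher–dimensional generalization of Proposition \ref{Gnonempty}, and the proof should follow the same template: exhibit the point, check that every consecutive $(n+1)$-block lies in $G$, and then verify periodicity by inspection.

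\textbf{Setup and plan.} The plan is to first unpack what $z_0$ actually is as an element of $\Pi_{i=0}^\infty I_i$. Concatenating $y_0,\dots,y_{p-1}$ via the Mahavier product is well-defined because consecutive $y_i$'s already share their link coordinate by construction (namely $x_{(i+1)n}$ is both the last coordinate of $y_i$ and the first coordinate of $y_{i+1}$). The final link $y_{p-1}\star y_0$ is legal precisely because of the hypothesis $x_{pn}=x_0$. Hence $z_0$ is literally the infinite sequence
\[
z_0=(x_0,x_1,\dots,x_{pn-1},x_0,x_1,\dots,x_{pn-1},x_0,\dots),
\]
i.e.\ the sequence of length $pn$ given by $(x_0,x_1,\dots,x_{pn-1})$ repeated forever. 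In particular, writing $z_0=(w_0,w_1,\dots)$, one has $w_j=x_{j\bmod pn}$ for every $j\geq 0$.

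\textbf{Step 1: membership in $\mathbf{G}$.} By the definition of $\star_{i=1}^\infty G$ for $G\subset I^{n+1}$, it suffices to verify that for every integer $k\geq 0$ the block
\[
(w_{kn},w_{kn+1},\dots,w_{(k+1)n})
\]
lies in $G$. Write $k=qp+r$ with $0\leq r<p$; then $kn\equiv rn\pmod{pn}$, and since the block has length $n+1\leq pn$ (using $p>1$), reduction modulo $pn$ gives
\[
(w_{kn},\dots,w_{(k+1)n})=(x_{rn},x_{rn+1},\dots,x_{(r+1)n})=y_r\in G.
\]
Thus every $(n+1)$-block beginning at a multiple of $n$ belongs to $G$, so $z_0\in\mathbf{G}$, and in particular $\mathbf{G}\neq\emptyset$.

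\textbf{Step 2: periodicity under $\sigma^n$.} Since $\sigma^{n}$ shifts coordinates by $n$, iterating $p$ times shifts by $pn$. Using $w_j=x_{j\bmod pn}$,
\[
(\sigma^n)^p(z_0)=\sigma^{pn}(z_0)=(w_{pn},w_{pn+1},\dots)=(w_0,w_1,\dots)=z_0,
\]
which is the required period-$p$ relation for $\sigma^n$.

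\textbf{Where the care is needed.} There is no substantive obstacle; the proof is entirely bookkeeping. The only subtlety is to be scrupulous about the overlap convention in the Mahavier product on $I^{n+1}$ (consecutive factors share one coordinate, not zero), so that the concatenated sequence has length $pn$ and not $p(n+1)$, and to use the hypothesis $x_{pn}=x_0$ at exactly the junction between $y_{p-1}$ and the next $y_0$. Once indexing is set up correctly, both membership in $\mathbf{G}$ and the period-$p$ property of $\sigma^n$ reduce to a single modular arithmetic identity, in exact parallel with the $n=1$ case handled in Proposition \ref{Gnonempty}.
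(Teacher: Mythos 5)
Your proof is correct and is exactly the ``straightforward'' argument the paper has in mind: the paper omits its proof, but the immediately preceding example ($n=2$, $p=3$) carries out precisely your computation, and your Steps 1--2 are the $I^{n+1}$ analogue of the paper's proof of Proposition \ref{Gnonempty}. Your care about the overlap convention (so the repeating block has length $pn$, with $x_{pn}=x_0$ used only at the $y_{p-1}\star y_0$ junction) is the one indexing point worth writing down, and you handle it correctly.
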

\begin{proof}
	The proof is straightforward and we omit it.
\end{proof}

%%%%%%%%%%%%end p 8

\begin{prop} \mbox{}

\begin{enumerate}

\item  Let $G$ be a nonempty closed subset of $I_0 \times I_1$ such that $\mathbf{G} \ne \emptyset$. Let $\cap_{i=0}^{\infty} \sigma^{i}(\mathbf{G})=\mathbf{G}^{*}$. Then $\mathbf{G}^{*} \ne \emptyset$ and $\mathbf{G}^{*} \subset \mathbf{G}$. Furthermore, $\sigma(\mathbf{G}^{*}) = \mathbf{G}^{*}$.

\item Let $G$ be a nonempty closed subset of $I_0 \times I_1 \times \cdots \times I_n$ such that $\mathbf{G} \ne \emptyset$. Let $\cap_{i=0}^{\infty} \sigma^{in}(\mathbf{G})=\mathbf{G}^{*}$. Then $\mathbf{G}^{*} \ne \emptyset$ and $\mathbf{G}^{*} \subset \mathbf{G}$. Furthermore, $\sigma^n(\mathbf{G}^{*}) = \mathbf{G}^{*}$.

\end{enumerate}

\end{prop}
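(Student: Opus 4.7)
The plan is to exploit three facts together: $\mathbf{G}$ is a closed (hence compact) subset of the Hilbert cube, the shift $\sigma$ is continuous, and by the previous proposition $\sigma(\mathbf{G}) \subset \mathbf{G}$. The last fact says the sequence $\mathbf{G} \supset \sigma(\mathbf{G}) \supset \sigma^{2}(\mathbf{G}) \supset \cdots$ is decreasing, and each $\sigma^{i}(\mathbf{G})$ is the continuous image of a nonempty compact set, hence nonempty and compact. Therefore the finite intersection property gives $\mathbf{G}^{*} = \bigcap_{i \ge 0} \sigma^{i}(\mathbf{G}) \ne \emptyset$. The inclusion $\mathbf{G}^{*} \subset \mathbf{G}$ is immediate from taking $i = 0$ in the intersection.

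The forward inclusion $\sigma(\mathbf{G}^{*}) \subset \mathbf{G}^{*}$ is a direct index shift: if $x \in \mathbf{G}^{*}$, then for every $i \ge 0$ we have $x \in \sigma^{i}(\mathbf{G})$, so $\sigma(x) \in \sigma^{i+1}(\mathbf{G}) \subset \sigma^{i}(\mathbf{G})$, and in particular $\sigma(x) \in \sigma^{0}(\mathbf{G}) = \mathbf{G}$; hence $\sigma(x) \in \bigcap_{i \ge 0} \sigma^{i}(\mathbf{G}) = \mathbf{G}^{*}$.

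The main obstacle is the reverse inclusion $\mathbf{G}^{*} \subset \sigma(\mathbf{G}^{*})$, because $\sigma$ need not be injective, so we cannot simply name a preimage. I would argue as follows: fix $y \in \mathbf{G}^{*}$. For each $i \ge 0$ the set $F_{i} := \sigma^{-1}(\{y\}) \cap \sigma^{i}(\mathbf{G})$ is closed in the compact space $I^{\infty}$, and it is nonempty because $y \in \sigma^{i+1}(\mathbf{G})$ provides some $x_{i} \in \sigma^{i}(\mathbf{G})$ with $\sigma(x_{i}) = y$. The sets $F_{i}$ are nested downward since $\sigma^{i+1}(\mathbf{G}) \subset \sigma^{i}(\mathbf{G})$, so by compactness $\bigcap_{i \ge 0} F_{i} = \sigma^{-1}(\{y\}) \cap \mathbf{G}^{*}$ is nonempty. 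Any $x$ in this intersection lies in $\mathbf{G}^{*}$ and satisfies $\sigma(x) = y$, finishing part (1).

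For part (2), the argument is essentially the same, with $\sigma^{n}$ replacing $\sigma$ throughout: the previous proposition provides $\sigma^{n}(\mathbf{G}) \subset \mathbf{G}$, the sequence $\{\sigma^{in}(\mathbf{G})\}_{i \ge 0}$ is decreasing and consists of nonempty compact sets, so $\mathbf{G}^{*}$ is nonempty and contained in $\mathbf{G}$; the forward inclusion $\sigma^{n}(\mathbf{G}^{*}) \subset \mathbf{G}^{*}$ is again a direct shift, and the reverse inclusion uses the same nested intersection trick applied to the preimage sets $(\sigma^{n})^{-1}(\{y\}) \cap \sigma^{in}(\mathbf{G})$. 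I expect no new difficulty in part (2) beyond re-indexing, so I would state part (2) and indicate that the proof is a routine modification.
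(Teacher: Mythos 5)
Your proof is correct and follows the same route as the paper: the sets $\sigma^{i}(\mathbf{G})$ form a decreasing sequence of nonempty compact sets, so their intersection is nonempty and contained in $\mathbf{G}$. The paper dismisses the equality $\sigma(\mathbf{G}^{*})=\mathbf{G}^{*}$ as ``easy to see''; you correctly identify that the reverse inclusion is the only delicate point (since $\sigma$ is not injective) and your nested-fiber compactness argument with the sets $F_{i}=\sigma^{-1}(\{y\})\cap\sigma^{i}(\mathbf{G})$ supplies exactly the detail the paper omits.
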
  

\begin{proof} We give a proof of (1). The proof of (2) is similar.
Since $\sigma^{n}(\mathbf{G}) \subset \sigma^{n-1}(\mathbf{G})$ for $n>0$, $\mathbf{G}^{*} \ne \emptyset$ and $\mathbf{G}^{*} \subset \mathbf{G}$. It is also easy to see that  $\sigma(\mathbf{G}^{*}) = \mathbf{G}^{*}$. 
 
\end{proof}

For  $G$ is a nonempty closed subset of $I_{0} \times I_{1} \times \cdots \times I_n$ and $\mathbf{G}=\star_{i=1}^{\infty}G \ne \emptyset$, we will call the set  $\mathbf{G}^{*} = \cap_{i=0}^{\infty} \sigma^{in}(\mathbf{G})$ the \textit{kernel} of $\mathbf{G}$.

\subsection*{Grid covers} 
Suppose $K$ is a closed subset of $I^{\infty}$. Let $\tau=\{\tau_{1}, \dots, \tau_{n}\}$ be a minimal open cover of $[0,1]$ by open intervals. Let $N$ be a positive integer. The \textit{grid generated by} $\tau$ \textit{for} $N$ is the collection $T$ of basic open sets in $I^{\infty}$ $$T=\{ \tau_{i_{0}} \times \tau_{i_{1}} \times \ldots \times \tau_{i_{N}} \times I^{\infty}: i_{j} \in \langle 1,n \rangle \}.$$ Since $T$ is an open cover of $I^{\infty}$ by basic open sets, it is therefore also a cover of $K$ by basic open sets. We will say that $T$ is a \textit{grid cover} of $K$. Likewise,  $$S=\{ \tau_{i_{0}} \times \tau_{i_{1}} \times \ldots \times \tau_{i_{N}}: i_{j} \in \langle 1,n \rangle \}.$$ is a \textit{grid cover} of $I^{N+1}$ by basic open sets, and is also therefore a cover of any closed subset $L$ of $I^{N+1}$.

%%%%%%%%%%%%%%%end p 9

Surely the following propositions are known, but we include them just to make sure our grid covers ``do the job'' that \textit{any} open cover of a compact subset of $I^n$ or $I^{\infty}$ would do.

\begin{prop} \label{grid cover}
Suppose $K$ is a closed subset of $I^{\infty}$. If $\mathcal{U}$ is an open cover of $K$ by open sets in $I^{\infty}$, then there is a grid cover $T$ of $I^{\infty}$ such that $T'=\{o \in T: o \cap K \ne \emptyset \}$ refines $\mathcal{U}$ and covers $K$. If $\mathcal{V}$ is an open cover of $K$ by open sets in the subspace $K$, then there is a grid cover $T$ of $I^{\infty}$ such that $T^{*}=\{o \cap K : o \in T \}$ refines $\mathcal{V}$ and covers $K$.

\end{prop}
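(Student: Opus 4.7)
The core idea is a Lebesgue-number argument adapted to the Hilbert-cube metric $d(x,y)=\sum_{i\ge 0}2^{-i}|x_i-y_i|$. For the first statement, I will first extract a finite subcover $U_1,\ldots,U_m\in\mathcal{U}$ of the compact set $K$. I then need an ``absolute Lebesgue number'' for this subcover, namely a $\delta>0$ such that any set $S\subset I^{\infty}$ of diameter less than $\delta$ with $S\cap K\ne\emptyset$ is contained in some $U_j$. To produce $\delta$, define $f:K\to\mathbb{R}$ by $f(x)=\max_{1\le j\le m}d(x,I^{\infty}\setminus U_j)$. Each distance function is continuous, so $f$ is continuous; since the $U_j$ cover $K$, $f(x)>0$ for every $x\in K$; by compactness of $K$, $f$ attains a positive minimum $\delta_0>0$. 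Then setting $\delta=\delta_0$ works: if $x\in S\cap K$, some $U_j$ has $d(x,I^{\infty}\setminus U_j)\ge\delta>\mathrm{diam}(S)$, forcing $S\subset U_j$.

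Next I shrink the grid parameters so every basic grid set has diameter less than $\delta$. Choose $N$ so that $\sum_{i>N}2^{-i}=2^{-N}<\delta/2$, and choose a minimal cover $\tau=\{\tau_1,\ldots,\tau_n\}$ of $[0,1]$ by open intervals each of diameter less than $\delta/4$. For the corresponding grid cover
\[
T=\bigl\{\tau_{i_0}\times\tau_{i_1}\times\cdots\times\tau_{i_N}\times I^{\infty}:i_j\in\langle 1,n\rangle\bigr\},
\]
the diameter of any $o\in T$ is bounded by $\sum_{i=0}^{N}2^{-i}\mathrm{diam}(\tau_{i_j})+\sum_{i>N}2^{-i}<2\cdot(\delta/4)+\delta/2=\delta$. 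Then every $o\in T$ with $o\cap K\ne\emptyset$ has diameter less than $\delta$ and meets $K$, so by the absolute Lebesgue property $o$ lies in some $U_j\in\mathcal{U}$. Thus $T'=\{o\in T:o\cap K\ne\emptyset\}$ refines $\mathcal{U}$; it covers $K$ because $T$ covers all of $I^{\infty}$.

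For the second assertion, each $V\in\mathcal{V}$ is open in the subspace $K$, hence has the form $V=\widetilde{V}\cap K$ for some open set $\widetilde{V}$ in $I^{\infty}$; the family $\mathcal{U}=\{\widetilde{V}:V\in\mathcal{V}\}$ is then an open cover of $K$ in $I^{\infty}$. Apply part one to $\mathcal{U}$ to obtain a grid cover $T$ such that $T'$ refines $\mathcal{U}$. For $o\in T$ with $o\cap K\ne\emptyset$, choose $\widetilde{V}\in\mathcal{U}$ with $o\subset\widetilde{V}$; then $o\cap K\subset\widetilde{V}\cap K=V$, so $T^{*}=\{o\cap K:o\in T\}$ refines $\mathcal{V}$ and still covers $K$.

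The routine obstacles are only bookkeeping: verifying that basic boxes of product form truly have diameters controlled by the geometric-series estimate, and handling the ``infinite tail'' of coordinates after index $N$ (which contributes only $2^{-N}$ to any diameter). The genuine conceptual step is the absolute Lebesgue number argument above, since the classical Lebesgue lemma bounds diameters of subsets of $K$ while we need to bound diameters of ambient sets that merely touch $K$; the continuous function $f$ built from distances to the complements of the finitely many $U_j$ is what converts the classical lemma to the form required here.
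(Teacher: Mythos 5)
Your proof is correct, but it takes a genuinely different route from the paper's. You run a metric argument: extract a finite subcover, manufacture an ``absolute'' Lebesgue number $\delta$ via the continuous function $f(x)=\max_j d(x,I^{\infty}\setminus U_j)$ (which correctly strengthens the classical lemma so that it applies to ambient sets merely \emph{touching} $K$, exactly what is needed since grid boxes need not lie in $K$), and then shrink the grid parameters $n$ and $N$ until every box has diameter below $\delta$; the tail estimate $\sum_{i>N}2^{-i}=2^{-N}$ and the geometric bound on the head are both right. The paper instead argues combinatorially: it refines $\mathcal{U}$ by basic open sets, passes to a finite subcover, shrinks each element to one with closure inside it, lists the endpoints of all the finitely many projection intervals to obtain a partition $0=t_0<\cdots<t_\gamma=1$ of $[0,1]$, and builds the grid from $\epsilon$-enlargements of the partition intervals, chosen so that each enlarged shrunken set still sits inside the original cover element. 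Your approach is shorter, more standard, and makes the quantitative dependence on $\mathcal{U}$ transparent through the single number $\delta$; the paper's construction produces a grid adapted to the specific cover (its boxes are unions of partition cells) without invoking any metric estimates, which fits the ``box-counting'' machinery developed afterwards. Your reduction of the second statement to the first by lifting each relatively open $V$ to an ambient open $\widetilde{V}$ is the same step the paper leaves to the reader. The only cosmetic point worth adding is a remark that a \emph{minimal} open cover of $[0,1]$ by intervals of length less than $\delta/4$ exists, since the grid construction in the paper requires $\tau$ to be minimal; this is immediate but is part of the definition you are invoking.
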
 

\begin{proof} Suppose $\mathcal{U}$ is an open cover of $K$ by open sets in $I^{\infty}$. Then there is a collection $\mathcal{V}$ of basic open sets in $I^{\infty}$ that refines $\mathcal{U}$ and covers $K$. We also assume that for each $v \in \mathcal{V}$, each projection $\pi_{k}(v)$ is an open interval (relative to $[0,1]$). Since $K$ is compact, there is a finite subcover $\mathcal{V}'$ of $\mathcal{V}$. Let $\mathcal{V}'=\{v_{1}, \ldots, v_{m} \}$.  There is a collection $\mathcal{W}$ of basic open sets such that $\mathcal{W}=\{ w_{1}, \ldots, w_{m} \}$, $\overline{w_{i}} \subset v_{i}$ for $1 \le i \le m$, and $\mathcal{W}$ covers $K$. Again, we can choose the collection $\mathcal{W}$ so that for each $w \in \mathcal{W}$, each projection $\pi_{k}(w)$ is an open interval relative to $[0,1]$. Since each $w_{i}$ is a basic open set, there is some positive integer $N$ such that $\pi_{j}(w_{i})=I$ for each $j > N$.

\vphantom{}

Let $\pi_{k}(\overline{w_{i}})=[a_{i,k},b_{i,k}]$ for each $0 \le k \le N$, $1\le i \le m$, and let 

$$ \mathcal{E}=\{x: x \in \{a_{i,k},b_{i,k}\}, 0 \le k \le N, 1 \le i \le n \}\cup \{0,1\}.$$ Since $\mathcal{E}$ is a finite subset of $I$, we can list the members of $\mathcal{E}$ in increasing order as $\mathcal{E}=\{0=t_{0},t_{1}, \dots, t_{\gamma}=1 \}$. Then each $\pi_{k}(\overline{w_{j}})$ is a unique union of consecutive intervals of the form $[t_{i-1},t_{i}]$. If $x=(x_{0},x_{1}, \ldots ) \in K$, there is some $w_{i}$ such that $x \in w_{i}$, which implies that for each $k \le N$, $x_{k} \in \pi_{k}(\overline{w_{i}})$, and there is some $t_{j_{k}}$ such that $x_{k} \in [t_{j_{k}},t_{j_{k}+1}]$. Thus, $x \in \Pi_{k=0}^{N}[t_{j_{k}},t_{j_{k}+1}] \times I^{\infty}$.

\vphantom{}

Suppose $\epsilon>0$. Let $\pi_{k}(\overline{w_{i}})^{+}=(a_{i,k}-\epsilon,b_{i,k}+\epsilon) \cap [0,1]$ for each $0 \le k \le N$, $1\le i \le m$.  Let $w_{i}^{+}=\Pi_{k=0}^{N}\pi_{k}(\overline{w_{i}})^{+} \times I^{\infty}$. We can choose $\epsilon >0$ so small that (1) $\epsilon < \min_{i=0}^{\gamma -1} \frac{ \{|t_{i+1}-t_{i}|\}}{16}$, and (2) $\overline{w_{i}} \subset w_{i}^{+} \subset v_{i}$. Then each $w_{i}^{+}$ is a union of members of $\Gamma = \{ \Pi_{i=0}^{N}((t_{j_{i}}-\epsilon,t_{j_{i}+1}+ \epsilon)\cap [0,1]) \times I^{\infty}: j_{i} \in \langle 0, \gamma \rangle \}$. Hence, if $\Gamma^{*}= \{ g \in \Gamma: g \subset \overline{w_i}^+ \text{ for some } i \text{ and } g \cap K \ne \emptyset \}$, then $\Gamma^{*} > \mathcal{W}> \mathcal{V}> \mathcal{U}$ and $\Gamma^{*}$ covers $K$.

\vphantom{}

The proof of the last statement now follows easily, so we omit it.

\end{proof}

%%%%%%%%%%%end p 7

\begin{prop} 
Suppose $M$ is a positive integer and $K$ is a closed subset of $I^{M+1}$. If $\mathcal{U}$ is an open cover of $K$ by open sets in $I^{M+1}$, then there is a grid cover $T$ of $I^{M+1}$ such that $T'=\{o \in T: o \cap K \ne \emptyset \}$ refines $\mathcal{U}$ and covers $K$. If $\mathcal{V}$ is an open cover of $K$ by open sets in the subspace $K$, then there is a grid cover $T$ of $I^{M+1}$ such that $T^{*}=\{o \cap K : o \in T \}$ refines $\mathcal{V}$ and covers $K$.

\end{prop}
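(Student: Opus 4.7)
The plan is to mirror the proof of Proposition \ref{grid cover}, exploiting the fact that everything is already finite-dimensional so no ``tail'' $I^{\infty}$ arguments are needed. I would begin with the open cover $\mathcal{U}$ of $K$ and refine it to a cover of $K$ by basic open sets of the form $\prod_{k=0}^{M}\pi_k(v)$, where each projection $\pi_k(v)$ is an open subinterval of $[0,1]$. Compactness of $K$ yields a finite subcover $\mathcal{V}'=\{v_1,\ldots,v_m\}$, and by standard normality we can shrink it to a cover $\mathcal{W}=\{w_1,\ldots,w_m\}$ of $K$ with $\overline{w_i}\subset v_i$, again choosing each $\pi_k(w_i)$ to be an open interval.

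Next, following the earlier proof almost verbatim, I would write $\pi_k(\overline{w_i})=[a_{i,k},b_{i,k}]$ and form the finite set
\begin{equation*}
\mathcal{E}=\{x:x\in\{a_{i,k},b_{i,k}\},\ 0\le k\le M,\ 1\le i\le m\}\cup\{0,1\},
\end{equation*}
listing it in increasing order as $\mathcal{E}=\{0=t_0<t_1<\cdots<t_\gamma=1\}$. Then each $\pi_k(\overline{w_i})$ is a union of consecutive closed intervals $[t_{j-1},t_j]$. I would then choose $\epsilon>0$ so small that (1) $\epsilon<\min_{j=0}^{\gamma-1}|t_{j+1}-t_j|/16$, and (2) the slightly enlarged basic sets $w_i^+:=\prod_{k=0}^{M}((a_{i,k}-\epsilon,b_{i,k}+\epsilon)\cap[0,1])$ still satisfy $\overline{w_i}\subset w_i^+\subset v_i$. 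This second condition is achievable because $\overline{w_i}\subset v_i$, both are compact (resp.\ open) in $I^{M+1}$, so a uniform $\epsilon$ exists.

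With these choices in hand, let $\tau=\{(t_{j}-\epsilon,t_{j+1}+\epsilon)\cap[0,1]:0\le j\le\gamma-1\}$, which is a minimal open cover of $[0,1]$ by open intervals, and let $T$ be the grid cover of $I^{M+1}$ generated by $\tau$ for $M$. Any box $g\in T$ is of the form $\prod_{k=0}^M(t_{j_k}-\epsilon,t_{j_k+1}+\epsilon)\cap[0,1]$. If $g$ meets $K$, pick any point $x\in g\cap K$ and some $w_i\ni x$; then by construction each interval $(t_{j_k}-\epsilon,t_{j_k+1}+\epsilon)$ must lie inside $(a_{i,k}-\epsilon,b_{i,k}+\epsilon)$, so $g\subset w_i^+\subset v_i$ and hence $g$ lies inside some element of $\mathcal{U}$. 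Thus $T'=\{o\in T:o\cap K\neq\emptyset\}$ refines $\mathcal{U}$ and still covers $K$.

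The only step requiring genuine attention is the choice of $\epsilon$ guaranteeing $w_i^+\subset v_i$, but this is routine from compactness of $\overline{w_i}$ and openness of $v_i$; everything else is bookkeeping. The second statement — for covers by open sets in the subspace topology on $K$ — follows immediately by extending each $V\in\mathcal{V}$ to an open set $\widetilde V$ in $I^{M+1}$ with $\widetilde V\cap K=V$, applying the first statement to $\widetilde{\mathcal{V}}=\{\widetilde V:V\in\mathcal{V}\}$, and then intersecting the resulting grid cover with $K$.
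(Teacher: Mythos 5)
Your overall strategy is the right one and matches the proof of Proposition \ref{grid cover} that the paper intends you to imitate, but the key refinement step contains a genuine gap. You claim that if a box $g=\prod_{k=0}^{M}\left((t_{j_k}-\epsilon,t_{j_k+1}+\epsilon)\cap[0,1]\right)$ meets $K$ at a point $x$ with $x\in w_i$, then each factor $(t_{j_k}-\epsilon,t_{j_k+1}+\epsilon)$ must lie inside $(a_{i,k}-\epsilon,b_{i,k}+\epsilon)$. This fails when some coordinate $x_k$ lies within $\epsilon$ of an endpoint of $\pi_k(w_i)$: if $a_{i,k}=t_p$ and $t_p<x_k<t_p+\epsilon$, then $x_k$ also belongs to the \emph{adjacent} grid interval $(t_{p-1}-\epsilon,t_p+\epsilon)$, which stretches across the entire gap $[t_{p-1},t_p]$ and need not be contained in $(a_{i,k}-\epsilon,b_{i,k}+\epsilon)$, nor in any $v_j$. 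Concretely, take $M=1$, $K=\{(0.301,0.301)\}$, $\mathcal{U}=\{(0.29,0.51)\times(0.29,0.51)\}$, and suppose the construction produces $w_1=(0.3,0.5)\times(0.3,0.5)$; then $\mathcal{E}=\{0,0.3,0.5,1\}$, and $\epsilon=0.005$ satisfies both of your conditions (1) and (2), yet the box $[0,0.3+\epsilon)\times[0,0.3+\epsilon)$ contains the point of $K$ and also contains $(0,0)$, so it meets $K$ but is not contained in the unique member of $\mathcal{U}$. Hence $T'$, the collection of \emph{all} boxes meeting $K$, need not refine $\mathcal{U}$ for the grid you built: a point of $K$ lying near a grid line belongs to more than one box, and your argument only controls the ``canonical'' one.

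There are two ways to repair this. The paper's own proof of Proposition \ref{grid cover} quietly sidesteps the issue by not proving the claim for all boxes meeting $K$: it passes to the subcollection $\Gamma^{*}$ of boxes that are both contained in some $w_i^{+}$ \emph{and} meet $K$, observes that each $w_i^{+}$ is a union of such boxes (so $\Gamma^{*}$ still covers $K$), and concludes that $\Gamma^{*}$ refines $\mathcal{U}$; you could do the same. If you want the statement literally as written, with $T'$ consisting of every box that meets $K$, you should instead control the mesh: by compactness of $K$ and openness of the members of $\mathcal{U}$ in $I^{M+1}$ there is $\delta>0$ such that for every $x\in K$ the ball $B(x,\delta)$ lies in some member of $\mathcal{U}$, so it suffices to take $\tau$ to consist of intervals short enough that every grid box has diameter less than $\delta$; then any box meeting $K$ is contained in such a ball and hence in a member of $\mathcal{U}$. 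Your reduction of the second statement to the first is fine once the first is repaired.
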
 
 
\begin{proof} The proof is similar to the proof of Proposition \ref{grid cover} and we omit it.

\end{proof}

\vphantom{}

For a grid cover $T$ of $I^{M+1}$ or $I^{\infty}$, we refer to the members of $T$ as \textit{boxes}. Setting up the machinery for a definition of topological entropy of $G$, a closed subset of $[0,1]^{2}$ (and later for $G$ a closed subset of $I^{M+1}$), takes some doing, but once in place, we will be able to compute topological entropy by ``counting'' the boxes our relevant sets intersect.

%%%%%%%%%%%%%%%%%%%%%%%%%%%%%%%%%%%%%%LINE 553
%%%%%%%%%%%%%%%%topological entropy in square%%%%%

\section{Topological entropy of closed subsets of $[0,1]^{2}$}

We index our intervals for bookkeeping purposes. For convenience, we also write $I^{\infty}$ for $\Pi_{i=m}^{\infty}I_{i}$ (for $m$ a positive integer). Suppose $G$ is a closed subset of $I_{0} \times I_{1}$. We can define the \textit{topological entropy} of $G$ as follows:

\begin{enumerate}

\item First, let $\alpha=\{\alpha_{1}, \dots, \alpha_{n}\}$ be a minimal open cover of $I_{0}$ by open intervals.   Then $N^{*}(\alpha)=n$. For each positive integer $m>1$, let $$\alpha^{m}=\{ \Pi_{j=0}^{m-1} \alpha_{k_{j}} :  k_{j} \in \langle 1,n \rangle, 0 \le j \le m-1 \}. $$ 

\item If $K$ is a closed subset of $\Pi_{i=0}^{m-1}I_{i}$ ($m>1$ a positive integer or $m=\infty$), and $U$ is a collection of open sets in $\Pi_{i=0}^{m-1}I_{i}$ that covers $K$, let $N(K,U)$ denote the least cardinality of a subcover of $K$ in $U$. 

\item Then $\alpha^{2}=\{ \alpha_{i} \times \alpha_{j}:  1 \le i,j \le n \}$ is a cover of $G$ by open subsets of $I_{0} \times I_{1}$, and $N(G,\alpha^{2}) \le n^{2}$.   
\item   Now
$$ \alpha^{3}=\{ \alpha_{i_{0}} \times \alpha_{i_{1}} \times \alpha_{i_{2}}:  i_{k} \in \langle 1,n \rangle, 0 \le k \le 2\}, $$ is a cover of $G \star G$ by open sets in $\Pi_{j=0}^{2}I_{j}$ and $N({G\star G},\alpha^{3}) \le n^{3}$.

\item  Note that $\alpha^{2} \star \alpha^{2}$ contains more sets than does $\alpha^{3}$ since it contains sets of the form $ (\alpha_{i} \times \alpha_{j}) \star ( \alpha_{k} \times \alpha_{l})$ for $i,j,k,l \le n$, and $ (\alpha_{i} \times \alpha_{j}) \star ( \alpha_{k} \times \alpha_{l})=\alpha_{i} \times (\alpha_{j} \cap \alpha_{k} )\times \alpha_{l})$, which is nonempty as long as $ \alpha_{j} \cap  \alpha_{k} \ne \emptyset$ . However, a minimal subcover of $G \star G$ in  $\alpha^{2} \star \alpha^{2}$ has the same number of elements as a minimal subcover of  $\alpha^{3}$, since each set $\alpha_{i} \times (\alpha_{j} \cap \alpha_{k} )\times \alpha_{l})$ is contained at least one member of $\alpha^{3}$.

\item We can continue this process for each $m \in \mathbb{N}$:
$$ \alpha^{m+1}=\{ \Pi_{j=0}^{m} \alpha_{k_{j}} :  k_{j} \in \langle 1,n \rangle, 0 \le j \le m \} $$ is an open cover of $\star_{i=1}^{m}G$ and $N(\star_{i=1}^{m}G,\alpha^{m+1}) \le n^{m+1}$. Again, a minimal subcover of $ \star_{i=1}^{m}G$ by elements of $ \star_{i=1}^{m} \alpha^{2}$ has the same number of elements as a minimal subcover of $ \star_{i=1}^{m}G$ by elements of $ \alpha^{m+1}$. Since using the cover $ \star_{i=1}^{m} \alpha^{2}$ is sometimes more convenient, we continue to use both covers. Without loss of generality, we assume that a minimal subcover (in both $\alpha^{m+1}$ and $ \star_{i=1}^{m} \alpha^{2}$) consists of sets of the form $ \Pi_{j=0}^{m} \alpha_{k_{j}}$.

\item   Suppose $G$ is a closed subset of $I_{0} \times I_{1}$. Let $\mathbf{G}=\star_{i=1}^{\infty}G$.

    \begin{itemize}
     \item For each positive integer $m$, $0 \le N(\star_{i=1}^{m}G,\alpha^{m+1}) \le n^{m+1}$. If      $\mathbf{G} \ne \emptyset$,  $0 < N(\star_{i=1}^{m}G,\alpha^{m+1})$.

     \item If      $\mathbf{G} \ne \emptyset$,  $1= N(\star_{i=1}^{m}G,\alpha^{m+1})$ if and only if there is a sequence $\alpha_{j_{0}}, \alpha_{j_{1}}, \ldots \alpha_{j_{m}}$ (with each $1 \le j_{i}\le n$) such that $\mathbf{G} \subset (\alpha_{j_{0}} \times \ldots \times \alpha_{j_{m}}) \times I^{\infty}$.

   \item If $\alpha, \beta$ are both minimal open covers of $I_{0}$ by open intervals and $\alpha < \beta$, then for each $m>0$, $N(\star_{i=1}^{m}G,\alpha^{m+1}) \le N(\star_{i=1}^{m}G,\beta^{m+1})$.

%%%%%%%%%%%end p 11

\begin{proof} Let $k= N(\star_{i=1}^{m}G,\beta^{m+1})$.  Let $\{B_{1},B_{2}, \dots, B_{k} \}$ be a subcover of $\star_{i=1}^{m}G$ in $\beta^{m+1}$ of minimal cardinality. For each $1 \le i \le k$, there is some $A_{i} \in \alpha^{m+1}$ such that $B_{i} \subset A_{i}$. Then $\{A_{1},A_{2}, \dots, A_{k} \}$ is a subcover of $\star_{i=1}^{m}G$ in $\alpha^{m+1}$ of cardinality $k$. Hence,  
$N(\star_{i=1}^{m}G,\alpha^{m+1}) \le N(\star_{i=1}^{m}G,\beta^{m+1})$.

\end{proof}

   \item  If $\alpha$ is a minimal open cover of $I_{0}$ by open intervals, $k,l$ are positive integers, and $K \subset  \star_{i=1}^{k} G$, $L \subset \star_{i=1}^{l} G$, $K,L$ are closed, then $\alpha^{k+1} \star \alpha^{l}$ is a cover of $\Pi_{i=0}^{k+l} I_{i}$ and of $K \star L$ by open sets in $\Pi_{i=0}^{k+l} I_{i}$, as is $(\star_{i=1}^{k} \alpha^{2})\star (\star_{i=1}^{l} \alpha^{2})=\star_{i=1}^{k+l} \alpha^{2}$. Furthermore,  $N(\star_{i=1}^{k+l}G,\alpha^{k+l+1}) = N((\star_{i=1}^{k+l}G,\star_{i=1}^{k+l}\alpha^{2}) \le N(\star_{i=1}^{k}G,\alpha^{k+1}) N(\star_{i=1}^{l}G,\alpha^{l+1})= \newline N(\star_{i=1}^{k}G,\star_{i=1}^{k}\alpha^{2}) N(\star_{i=1}^{l}G,\star_{i=1}^{l}\alpha^{2})$.

\begin{proof} 

Showing that $\alpha^{k+1} \star \alpha^{l+1}$ and $(\star_{i=1}^{k} \alpha^{2})\star (\star_{i=1}^{l} \alpha^{2})=\star_{i=1}^{k+l} \alpha^{2}$ are open  covers of $\Pi_{i=0}^{k+l} I_{i}$ and of $K \star L$ by open sets in $\Pi_{i=0}^{k+l} I_{i}$ is straightforward and we omit it.  Let  $\{A_{1},A_{2}, \dots, A_{p} \}$ be a subcover of $\star_{i=1}^{k}G$ in $\alpha^{k+1}$ of minimal cardinality and let $\{B_{1},B_{2}, \dots, B_{q} \}$ be a subcover of $\star_{i=1}^{l}G$ in $\alpha^{l+1}$ of minimal cardinality. Then $\{A_{i} \star B_{j}: 1 \le i \le p, 1 \le j \le q \}$ is a subcover of  $\star_{i=1}^{k+l}G$ in $\alpha^{k+1} \star \alpha^{l+1}$, and

\begin{align*}
	N(\star_{i=1}^{k+l}G,\alpha^{k+1} \star \alpha^{l+1})= &N(\star_{i=1}^{k+l}G, \alpha^{k+l+1}) = N(\star_{i=1}^{k+l}G, \star_{i=1}^{k+l}\alpha^{2}) \\ 
	\le & N( \star_{i=1}^{k}G,\alpha^{k+1}) N(\star_{i=1}^{l}G,\alpha^{l+1}) \\
	= &N(\star_{i=1}^{k}G, \star_{i=1}^{k}\alpha^{2}) N(\star_{i=1}^{l}G, \star_{i=1}^{l}\alpha^{2}).
\end{align*}

\end{proof}

 \item  If $\alpha, \beta$ are both minimal open covers of $I_{0}$ by open intervals, then for each $m>0$, $\alpha^{m+1} \vee \beta^{m+1} = (\alpha \vee \beta)^{m+1}$, and $N(\star_{i=1}^{m}G,\alpha^{m+1} \vee \beta^{m+1}) \le N(\star_{i=1}^{m}G,\alpha^{m+1}) N(\star_{i=1}^{m}G,\beta^{m+1})$.

\begin{proof} 

Showing that $(\alpha \vee \beta)^{m+1} = \alpha^{m+1} \vee \beta^{m+1}$ is straightforward and we omit it.  Let  $\{A_{1},A_{2}, \dots, A_{k} \}$ be a subcover of $\star_{i=1}^{m}G$ in $\alpha^{m+1}$ of minimal cardinality and let $\{B_{1},B_{2}, \dots, B_{l} \}$ be a subcover of $\star_{i=1}^{m}G$ in $\beta^{m+1}$ of minimal cardinality. Then $\{A_{i} \cap B_{j}: 1 \le i \le k, 1 \le j \le l \}$ is a subcover of  $\star_{i=1}^{m}G$ in $\alpha^{m+1} \vee \beta^{m+1}$, and $N(\star_{i=1}^{m}G, \alpha^{m+1} \vee \beta^{m+1}) \le N( \star_{i=1}^{m}G,\alpha^{m+1}) N(\star_{i=1}^{m}G,\beta^{m+1})$.

\end{proof}

\item If $K$ is a closed subset of $G \subset I_{0} \times I_{1}$, $m$ is a positive integer, and $\alpha =\{\alpha_{1}, \dots, \alpha_{n}\}$ is a minimal open cover of $I_{0}$ by open intervals, then  $N(\star_{i=1}^{m}K,\alpha^{m+1}) \le N(\star_{i=1}^{m}G,\alpha^{m+1})$.

\begin{proof} Suppose $\{A_{1},A_{2}, \ldots, A_{k} \}$ is an open subcover of minimum cardinality of $\star_{i=1}^{m}G$ in $\alpha^{m+1}$. Since $\star_{i=1}^{m}K \subset \star_{i=1}^{m}G$, $\{A_{1},A_{2}, \ldots, A_{k} \}$ is also an open subcover of $K$. Hence, $N(\star_{i=1}^{m}K,\alpha^{m+1}) \le N(\star_{i=1}^{m}G,\alpha^{m+1})$.

\end{proof}

\item Suppose $l$ and $m$ are positive integers. Then $\alpha^{l+1}$ is a grid cover of $\star_{i=1}^{l}G$, $\alpha^{l+m+1}$ is a grid cover of $\star_{i=1}^{l+m}G$, and $\alpha^{l+1} \times \Pi_{i=l+1}^{m+1}I_{i}$ is an open cover of $\star_{i=1}^{l+m}G$.  Then $N(\star_{i=1}^{l}G,\alpha^{l+1}) \le N(\star_{i=1}^{l+m}G,\alpha^{l+1} \times \Pi_{i=l+1}^{l+m} I_i) \le N(\star_{i=1}^{l+m}G,\alpha^{l+m+1})$.

%%%%%%%%%%%%%%%end p 12

\begin{proof} 
Suppose $\{B_{j} \times \Pi_{i=l+1}^{l+m} I_i \}_{j=1}^{k} $ is a subcover of  $\star_{i=1}^{l+m}G$ in $\alpha^{l+1} \times \Pi_{i=l+1}^{l+m} I_i$ of least cardinality. Then $\{B_{j} \}_{j=1}^{k}$ is a subcover of  $\star_{i=1}^{l}G$ in $\alpha^{l+1}$ of least cardinality. Hence, $N(\star_{i=1}^{l}G,\alpha^{l+1} ) \le N(\star_{i=1}^{l+m}G,\alpha^{l+1}\times \Pi_{i=l+1}^{l+m} I_i )$. Since $\alpha^{l+m+1}$ refines $\alpha^{l+1}\times \Pi_{i=l+1}^{l+m} I_i$,   $N(\star_{i=1}^{l+m}G,\alpha^{l+1}\times \Pi_{i=l+1}^{l+m} I_i) \le  N(\star_{i=1}^{l+m}G,\alpha^{l+m+1}) $. The result follows.

\end{proof}
 \end{itemize}

\item 
If $\alpha=\{\alpha_{1}, \dots, \alpha_{n}\}$ is a minimum open cover of $I_{0}$ by intervals, $G$ is a closed subset of $I_{0} \times I_{1}$ and $\mathbf{G} \ne \emptyset$, then $ \underset{m \to \infty}{\lim} \frac{\log N(\star_{i=1}^{m}G,\alpha^{m+1})}{m} = \underset{m \to \infty}{\lim} \frac{\log N(\star_{i=1}^{m}G,\star_{i=1}^{m}\alpha^{2})}{m} $ exists.

\begin{proof}

Let $a_{m}= \log N(\star_{i=1}^{m}G,\alpha^{m+1})=\log N(\star_{i=1}^{m}G,\star_{i=1}^{m}\alpha^{2})$ for each $m \in \mathbb{N}$. Then $1 \le   N(\star_{i=1}^{m}G,\alpha^{m+1}) \le n^{m+1}$, so $$ 0 \le a_{m} =  \log N(\star_{i=1}^{m}G,\alpha^{m+1}) \le (m+1) \log n.$$ By Lemma \ref{Walters}, it suffices to show that $a_{m+k} \le a_{m} +a_{k}$.  We have $$\alpha^{m+k+1} \subset  (\star_{i=1}^{m} \alpha^{2}) \star (\star_{i=m+1}^{m+k} \alpha^{2}), $$ and $$N(\star_{i=1}^{m+k}G,\alpha^{m+k+1}) = N( \star_{i=1}^{m+k}G,(\star_{i=1}^{m} \alpha^{2}) \star (\star_{i=m+1}^{m+k} \alpha^{2})).$$

Since $N(\star_{i=1}^{m}G,\alpha^{m+1})$ is the cardinality of a minimal subcover of $\star_{i=1}^{m} G$  in $\star_{i=1}^{m} \alpha^{2}$, and $N(\star_{i=1}^{k}G,\alpha^{k+1})$ is the cardinality of a minimal subcover of $\star_{i=1}^{k} G=\star_{i=m+1}^{m+k} G$  in $\star_{i=m+1}^{m+k} \alpha^{2}$, $(\star_{i=1}^{m} \alpha^{2}) \star (\star_{i=m+1}^{m+k} \alpha^{2})$ is a cover of $\star_{i=1}^{m+k}G$  in $\prod_{i=0}^{m+k}I_{i}$. Thus, 

\begin{align*}
&N(\star_{i=1}^{m+k}G,\alpha^{m+k+1}) =  N(\star_{i=1}^{m+k}G, (\star_{i=1}^{m} \alpha^{2}) \star (\star_{i=m+1}^{m+k} \alpha^{2}) )  = \\
&  N( \star_{i=1}^{m+k}G,(\star_{i=1}^{m} \alpha^{2}) \star (\star_{i=1}^{k} \alpha^{2}) ) \le  N(\star_{i=1}^{m}G,\star_{i=1}^{m} \alpha^{2})  N( \star_{i=1}^{k}G,\star_{i=1}^{k} \alpha^{2})  ,
\end{align*}

and we have 

\begin{align*}
a_{m+k} &= \log(N(\star_{i=1}^{m+k}G,\alpha^{m+k+1}) ) \le \log(  N(\star_{i=1}^{m}G,\star_{i=1}^{m} \alpha^{2})  N(\star_{i=1}^{k}G, \star_{i=1}^{k} \alpha^{2}) ) \\
&=\log(  N(\star_{i=1}^{m}G,\star_{i=1}^{m} \alpha^{2})+  \log (N( \star_{i=1}^{k}G,\star_{i=1}^{k} \alpha^{2}) )=a_{m}+a_{k} .
\end{align*}

\end{proof}

\item If $\mathbf{G} \ne \emptyset$, define $\ent(G,\alpha)$ to be 

  \[ 
\ent(G,\alpha) = \lim_{m \to \infty} \frac{\log N(\star_{i=1}^{m}G,\alpha^{m+1})}{m}. 
\]

If $\mathbf{G} = \emptyset$, define $\ent(G,\alpha)=0$.

%%%%%%%%%%%%%%%%%%MISSING St\TUFF line 501
%%%%%%%%%%%%%%%%%%%%%%%%%%%%%%%%%
 
\item \begin{prop} \label{ent subset}
If $\alpha=\{\alpha_{1}, \dots, \alpha_{n}\}$ is a minimum open cover of $I_{0}$ by intervals, $G$ is a closed subset of $I_{0} \times I_{1},$ then we have the following:
\begin{enumerate}
	
	\item $\ent(G,\alpha) \ge 0$.
	
	\item If $\alpha < \beta$, $\alpha, \beta$ both minimal covers of $I_{0}$ by open intervals, then $\ent(G,\alpha) \le \ent(G,\beta)$.
	
	\item If $K$ is a closed subset of $G \subset I_{0} \times I_{1}$, $m$ is a positive integer, and $\alpha =\{\alpha_{1}, \dots, \alpha_{n}\}$ is a minimal open cover of $I_{0}$ by open intervals, then $\ent(K,\alpha) \le \ent(G,\alpha)$.
\end{enumerate}
\end{prop}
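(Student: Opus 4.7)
The plan is to observe that all three parts of the proposition are essentially immediate consequences of inequalities that were already established as bullet items in the preceding list, combined with the fact that $\log$ is monotone and the limit defining $\ent$ preserves non-strict inequalities. The only real care required is in handling the degenerate cases where one or both of the infinite Mahavier products is empty, since the definition of $\ent$ bifurcates there.

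For part (1), I would split into two cases. If $\mathbf{G} = \emptyset$, then $\ent(G,\alpha) = 0$ by definition. If $\mathbf{G} \ne \emptyset$, then by Proposition \ref{prop5.31} every finite Mahavier product $\star_{i=1}^{m}G$ is nonempty, so any open cover of $\star_{i=1}^{m}G$ must have at least one element in a subcover, giving $N(\star_{i=1}^{m}G, \alpha^{m+1}) \ge 1$ and hence $\log N(\star_{i=1}^{m}G, \alpha^{m+1}) \ge 0$ for each $m$. Dividing by $m$ and passing to the limit yields $\ent(G,\alpha) \ge 0$.

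For part (2), if $\mathbf{G} = \emptyset$, both entropies are $0$ and the inequality is trivial. Otherwise, the bullet item above showing that $\alpha < \beta$ (for minimal open interval covers) implies $N(\star_{i=1}^{m}G, \alpha^{m+1}) \le N(\star_{i=1}^{m}G, \beta^{m+1})$ for every $m$ does all the work. Taking logarithms, dividing by $m$, and letting $m \to \infty$ preserves the inequality, giving $\ent(G,\alpha) \le \ent(G,\beta)$. For part (3), first note that $K \subset G$ forces $\star_{i=1}^{m}K \subset \star_{i=1}^{m}G$ for each $m$, and hence $\mathbf{K} \subset \mathbf{G}$. If $\mathbf{K} = \emptyset$, then $\ent(K,\alpha) = 0 \le \ent(G,\alpha)$ by part (1). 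If $\mathbf{K} \ne \emptyset$, then $\mathbf{G} \ne \emptyset$ as well, and the bullet item providing $N(\star_{i=1}^{m}K, \alpha^{m+1}) \le N(\star_{i=1}^{m}G, \alpha^{m+1})$ lets us conclude in exactly the same way as in (2), by taking logarithms, dividing by $m$, and taking limits.

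The main (very minor) obstacle is just keeping the case analysis straight for the empty-Mahavier-product situations, so that the definition $\ent(G,\alpha) = 0$ when $\mathbf{G} = \emptyset$ is invoked where needed. Everything else is bookkeeping on top of already-stated lemmas, so I would not expect this proof to exceed a short paragraph per item once the case split is made explicit.
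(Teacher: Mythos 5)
Your proposal is correct and follows essentially the same route as the paper: parts (2) and (3) are obtained by citing the previously established inequalities $N(\star_{i=1}^{m}G,\alpha^{m+1}) \le N(\star_{i=1}^{m}G,\beta^{m+1})$ and $N(\star_{i=1}^{m}K,\alpha^{m+1}) \le N(\star_{i=1}^{m}G,\alpha^{m+1})$ and passing to the limit, while part (1) is immediate from the definition. Your only addition is making the $\mathbf{G}=\emptyset$ and $\mathbf{K}=\emptyset$ case analysis explicit, which the paper leaves tacit.
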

\begin{proof}
	(a) follows directly from the definition of $\ent(G,\alpha).$
	
	Let us prove (b):
	For each positive integer $m$, $\alpha^{m} < \beta^{m}$. If $\{B_{1}, \ldots, B_{k} \}$ is a minimal subcover of $\star_{i=1}^{m}G$ in $\beta^{m+1}$, then for each $1 \le i \le k$, there is some $A_{i} \in \alpha^{m+1}$ such that $B_{i} \subset A_{i}$. Thus, $\{A_{1}, \ldots, A_{k} \}$ is a subcover of $\star_{i=1}^{m}G$ in $\alpha^{m+1}$, and $\ent(G,\alpha) \le \ent(G, \beta)$.
	
	(c) follows directly from the fact that  $N(\star_{i=1}^{m}K,\alpha^{m+1}) \le \newline N(\star_{i=1}^{m}G,\alpha^{m+1})$ for each positive integer $m$.
\end{proof}

\item Finally, we define $\ent(G)= \underset{\alpha} {\sup} \{\ent(G, \alpha)\}$, where $\alpha$ ranges over all minimal covers of $I_{0}$ by open intervals (in $I_{0})$. 

\end{enumerate}

\begin{theorem} \label{G inverse ent}
Let $G$   be a closed subset of $I_{0} \times I_{1}$ and $G^{-1}=\{(x,y): (y,x) \in G \}$. Then $\ent(G)=\ent(G^{-1})$.

\end{theorem}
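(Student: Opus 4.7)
The plan is to exploit the obvious coordinate-reversal symmetry between $\star_{i=1}^{m}G$ and $\star_{i=1}^{m}G^{-1}$ and show that it preserves grid boxes, hence preserves the counting function $N(\cdot,\alpha^{m+1})$ on the nose.

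First I would define, for each $m\ge 1$, the reversal homeomorphism $\rho_{m}: \Pi_{i=0}^{m} I_i \to \Pi_{i=0}^{m} I_i$ by $\rho_m(x_0,x_1,\ldots,x_m)=(x_m,x_{m-1},\ldots,x_0)$. A direct check from the definition shows that $(x_0,\ldots,x_m)\in \star_{i=1}^{m}G$ iff $(x_{i-1},x_i)\in G$ for every $1\le i\le m$, iff $(x_i,x_{i-1})\in G^{-1}$ for every such $i$, iff $\rho_m(x_0,\ldots,x_m)\in \star_{i=1}^{m}G^{-1}$. Thus $\rho_m(\star_{i=1}^{m}G)=\star_{i=1}^{m}G^{-1}$. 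In particular, by Proposition \ref{prop5.31}, $\mathbf{G}=\emptyset$ iff $\mathbf{G^{-1}}=\emptyset$, so the $\ent=0$ clause of the definition handles both sides simultaneously in the empty case; assume from now on $\mathbf{G}\ne\emptyset$.

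Next I would observe that $\rho_m$ permutes the grid cover $\alpha^{m+1}$: the box $\alpha_{k_0}\times\alpha_{k_1}\times\cdots\times\alpha_{k_m}$ is sent to $\alpha_{k_m}\times\alpha_{k_{m-1}}\times\cdots\times\alpha_{k_0}$, which is again a member of $\alpha^{m+1}$ since the indices range over the same collection $\alpha$. Consequently, if $\{B_1,\ldots,B_k\}\subset\alpha^{m+1}$ is a minimum cover of $\star_{i=1}^{m}G$, then $\{\rho_m(B_1),\ldots,\rho_m(B_k)\}\subset\alpha^{m+1}$ is a cover of $\rho_m(\star_{i=1}^{m}G)=\star_{i=1}^{m}G^{-1}$ of the same cardinality. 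Therefore
\[
N(\star_{i=1}^{m}G^{-1},\alpha^{m+1})\ \le\ N(\star_{i=1}^{m}G,\alpha^{m+1}).
\]
Applying the same argument with $G$ replaced by $G^{-1}$ (and using $(G^{-1})^{-1}=G$) gives the reverse inequality, so $N(\star_{i=1}^{m}G^{-1},\alpha^{m+1})=N(\star_{i=1}^{m}G,\alpha^{m+1})$ for every $m$.

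Dividing the logarithms by $m$ and passing to the limit yields $\ent(G,\alpha)=\ent(G^{-1},\alpha)$ for every minimal open interval cover $\alpha$ of $I_0$. Since the set of such $\alpha$ used in the supremum is the same for $G$ and $G^{-1}$, taking $\sup_\alpha$ on both sides produces $\ent(G)=\ent(G^{-1})$. There is really no obstacle here: the whole argument is a symmetry observation, and the only point that requires a sentence of care is verifying that the reversal map preserves the family $\alpha^{m+1}$ (which it does precisely because $\alpha^{m+1}$ is the full grid cover indexed by arbitrary $(m+1)$-tuples from $\alpha$, and is therefore closed under permutation of coordinates).
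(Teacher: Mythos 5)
Your proposal is correct and follows essentially the same route as the paper: both rest on the observation that coordinate reversal carries $\star_{i=1}^{m}G$ onto $\star_{i=1}^{m}G^{-1}$ and permutes the grid cover $\alpha^{m+1}$, forcing $N(\star_{i=1}^{m}G,\alpha^{m+1})=N(\star_{i=1}^{m}G^{-1},\alpha^{m+1})$ and hence equality of the entropies. Your version is slightly more explicit (naming the reversal homeomorphism, transporting minimal subcovers directly, and handling the $\mathbf{G}=\emptyset$ case), but the underlying argument is identical.
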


\begin{proof} 

For a positive integer $m$, note that a point $(x_{0},x_{1}, \ldots, x_{m}) \in \star_{i=1}^{m}G$ if and only if $(x_{m},x_{m-1}, \ldots, x_{0}) \in \star_{i=1}^{m}G^{-1}$.  Suppose $\alpha=\{\alpha_{1}, \dots, \alpha_{n} \}$ is a minimal cover of $I_{0}$ by open intervals. Suppose $m$ is a positive integer. Then $\alpha_{i_{0}} \times \alpha_{i_{1}} \times \cdots \times \alpha_{i_{m}} \in \alpha^{m+1}$ if and only if $\alpha_{i_{m}} \times \alpha_{i_{m-1}} \times \cdots \times \alpha_{i_{0}} \in \alpha^{m+1}$, and $(\alpha_{i_{0}} \times \alpha_{i_{1}} \times \cdots \times \alpha_{i_{m}}) \cap \star_{j=1}^{m}G \ne \emptyset$ if and only if $(\alpha_{i_{m}} \times \alpha_{i_{m-1}} \times \cdots \times \alpha_{i_{0}})\cap \star_{j=1}^{m}G^{-1} \ne \emptyset$. Then $N(\star_{i=1}^{m}G,\alpha^{m+1})=N(\star_{i=1}^{m}G^{-1},\alpha^{m+1})$ for each $m$. Hence, $\ent(G,\alpha)=\ent(G^{-1},\alpha)$ for each cover $\alpha$, and the result follows.

\end{proof}

%%%%%%%%%%%%%%%%%%%%%%%%%%%%

\noindent \textbf{Remark} Theorem \ref{G inverse ent} above overlaps with Corollary 3.6 of \cite{KT}.

\begin{prop} \label{inverse cover}

 If $\mathcal{V}$ is an open cover (in $I^{\infty}$) of $\star^{\infty}_{i=1}G$, then $\sigma^{-1}(\mathcal{V}):=\{\sigma^{-1}(v):v \in \mathcal{V}\}=\{I_{0} \times v:v \in \mathcal{V}\}$ is also an open cover (in $I^{\infty}$) of  $\star^{\infty}_{i=1}G$. 
 \end{prop}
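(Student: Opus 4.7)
The plan is to split the statement into three assertions and verify each: (i) the set-theoretic identity $\sigma^{-1}(v) = I_0 \times v$ for every $v \in \mathcal{V}$, (ii) openness of each such $\sigma^{-1}(v)$ in $I^\infty$, and (iii) the covering property for $\mathbf{G} := \star_{i=1}^\infty G$.

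For (i), I would unpack the definition of the shift directly: by the definition of $\sigma$, a point $x = (x_0, x_1, x_2, \ldots)$ lies in $\sigma^{-1}(v)$ precisely when $(x_1, x_2, \ldots) \in v$, which, after the natural reindexing identifying $\Pi_{i=1}^\infty I_i$ with $I^\infty$, is exactly the defining condition for membership in $I_0 \times v$. Part (ii) then follows immediately from continuity of $\sigma : I^\infty \to I^\infty$, noted in the background material; alternatively one can observe directly that $I_0 \times v$ is a product of open sets.

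For (iii), the key input is the earlier proposition that $\sigma(\mathbf{G}) \subset \mathbf{G}$. Given any $x \in \mathbf{G}$, that proposition supplies $\sigma(x) \in \mathbf{G}$; since $\mathcal{V}$ covers $\mathbf{G}$, there is some $v \in \mathcal{V}$ with $\sigma(x) \in v$, and hence $x \in \sigma^{-1}(v) = I_0 \times v$. Thus every point of $\mathbf{G}$ lies in a member of $\sigma^{-1}(\mathcal{V})$.

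There is no real obstacle here. The proposition is essentially a bookkeeping assembly of three ingredients: the product form of a shift preimage, the automatic openness coming from continuity of $\sigma$, and the one-sided invariance $\sigma(\mathbf{G}) \subset \mathbf{G}$ already proven. The only mild subtlety is the convention that lets one write $I_0 \times v$ when $v$ is already presented as a subset of $I^\infty$; this amounts to implicitly reindexing $\Pi_{i=1}^\infty I_i$ as $I^\infty$, and I would flag this identification briefly at the start of the proof so the reader is not confused by the abuse of notation.
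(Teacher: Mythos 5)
Your proposal is correct and follows essentially the same route as the paper's own proof: take $x \in \star_{i=1}^{\infty}G$, use $\sigma(x) \in \star_{i=1}^{\infty}G$ to find $v \in \mathcal{V}$ containing $\sigma(x)$, and pull back via $\sigma^{-1}(v) = I_0 \times v$. Your extra remarks on openness and on the reindexing convention are harmless elaborations of steps the paper leaves implicit.
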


\begin{proof}
Suppose $x=(x_{0},x_{1}, \ldots) \in   \star^{\infty}_{i=1}G$. Then $\sigma(x)=(x_{1},x_{2}, \ldots) \in \star^{\infty}_{i=1}G$, and  there is some $v \in \mathcal{V}$ such that $\sigma(x) \in v$. Since $I_{0} \times v = \sigma^{-1}(v)$, $x \in \sigma^{-1}(v) \in \sigma^{-1}(\mathcal{V})$. 

\end{proof}

\begin{comment}

Suppose $\alpha=\{\alpha_{1}, \dots, \alpha_{n}\}$ is a minimal open cover of $I_{0}$ by intervals. Suppose $G$ is a closed subset of $I_{0} \times I_{1}$, and $\mathbf{G} =\star_{i=1}^{\infty}G \ne \emptyset$. Let  $\Gamma(\alpha)= \cup \{ \alpha^{m+1} \times I^{\infty}: m \in \mathbb{N} \}$. Then $\Gamma(\alpha)$ is an open cover of $I^{\infty}$ by basic open sets such that each projection $\pi_{i}(u)$ is an open interval (in $I_{i}$) for each $u \in \Gamma(\alpha)$ and $i \ge 0$. Thus, $\Gamma(\alpha)$ is also an open cover of $\mathbf{G}$. Let $\Gamma^{*}(\alpha)= \{ B \cap \mathbf{G}: B \in \Gamma(\alpha) \}$. Then $\Gamma^{*}(\alpha)$ is an open cover in the space $\mathbf{G}$ of the space $\mathbf{G}$. 

\end{comment}

\vphantom{}

If $\mathcal{U}$ is an open cover of $I^{\infty}$, $G$ is a closed subset of $I_{0} \times I_{1}$, and $\mathbf{G}=\star_{i=1}^{\infty}G \ne \emptyset$, let $\mathcal{U}^{*}= \{ u \cap \mathbf{G}: u \in \mathcal{U} \}$ denote the corresponding open cover of $\mathbf{G}$ by open sets in $\mathbf{G}$.

\begin{theorem} \label{sigma versus G}
Suppose $G$ is a closed subset of $I_{0} \times I_{1}$, $\mathbf{G} =\star_{i=1}^{\infty}G\ne \emptyset$,  and $\sigma(\mathbf{G})=\mathbf{G}$.  If $\alpha=\{\alpha_{1}, \dots, \alpha_{n}\}$ is a minimal open cover of $I_{0}$ by open  intervals, then $\ent(G,\alpha)=h(\sigma,(\alpha^{M+1} \times I^{\infty})^{*})$ for each positive integer $M$.

\end{theorem}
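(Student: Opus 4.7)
The strategy is to convert both sides into limits of grid-cover counts on finite Mahavier products. Write $\beta=(\alpha^{M+1}\times I^{\infty})^*$. A direct computation gives
\[
\sigma^{-i}\left((\alpha_{j_0}\times\cdots\times\alpha_{j_M})\times I^{\infty}\right) = I^i\times\alpha_{j_0}\times\cdots\times\alpha_{j_M}\times I^{\infty},
\]
so the members of $\vee_{i=0}^n\sigma^{-i}(\alpha^{M+1}\times I^{\infty})$ are products $(B_0\times\cdots\times B_{n+M})\times I^{\infty}$ where each $B_j$ is an intersection of at most $M+1$ elements of $\alpha$. Since each such intersection is contained in each of the intersected $\alpha$-intervals, a refinement argument analogous to Proposition \ref{ent subset}(b) shows this join and the grid cover $\alpha^{n+M+1}\times I^{\infty}$ have the same minimum subcover cardinality for $\mathbf{G}$. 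Because a set of the form $B\times I^{\infty}$ covers $\mathbf{G}$ if and only if $B$ covers the projection $\pi_{\langle 0,n+M\rangle}(\mathbf{G})$, one obtains
\[
N(\mathbf{G},\vee_{i=0}^n\sigma^{-i}\beta)=N(\pi_{\langle 0,n+M\rangle}(\mathbf{G}),\alpha^{n+M+1}).
\]

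Dividing by $n$ and letting $n\to\infty$ (using $(n+M)/n\to 1$) realizes $h(\sigma,\beta)$ as $\lim_m\frac{1}{m}\log N(\pi_{\langle 0,m\rangle}(\mathbf{G}),\alpha^{m+1})$. The inclusion $\pi_{\langle 0,m\rangle}(\mathbf{G})\subseteq \star_{i=1}^m G$ then immediately yields the easy inequality $h(\sigma,\beta)\le\ent(G,\alpha)$.

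For the reverse inequality I would use the hypothesis $\sigma(\mathbf{G})=\mathbf{G}$, which forces $\pi_k(\mathbf{G})=A:=\pi_0(\mathbf{G})$ for every $k\ge 0$. By compactness, the closed sets $\pi_0(\star_{i=1}^k G)$ decrease to $A$ as $k\to\infty$, with the analogous statement for last-coordinate projections. The plan is to show that the ``boundary'' $\star_{i=1}^m G\setminus \pi_{\langle 0,m\rangle}(\mathbf{G})$ contributes only subexponentially to the grid-cover count, so that
\[
N(\star_{i=1}^m G,\alpha^{m+1})\le p(m)\cdot N(\pi_{\langle 0,m\rangle}(\mathbf{G}),\alpha^{m+1})
\]
for some $p(m)$ of at most polynomial growth; taking $\frac{1}{m}\log$ of both sides then absorbs $p(m)$ and gives $\ent(G,\alpha)\le h(\sigma,\beta)$. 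The main obstacle is quantifying this boundary correction: a naive separation argument is delicate when an $\alpha$-interval has boundary points in $A$, so one must carefully combine the full force of the surjectivity hypothesis with the compactness of $I^{\infty}$ to bound the number of extra grid boxes needed to cover the ``non-extendable'' chains of $\star_{i=1}^m G$ as $m$ grows.
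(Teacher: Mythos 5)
Your first half is sound and is essentially the paper's own argument carried out a bit more explicitly: you show that $\vee_{i=0}^{n}\sigma^{-i}(\alpha^{M+1}\times I^{\infty})$ and the grid cover $\alpha^{n+M+1}\times I^{\infty}$ mutually refine one another (the paper does this for the case $n=1$ via the sets $\mathcal{U}$ and $\mathcal{V}$ and then says ``we can continue''), and you correctly observe that a collection of sets of the form $B\times I^{\infty}$ covers $\mathbf{G}$ exactly when the $B$'s cover $\pi_{\langle 0,n+M\rangle}(\mathbf{G})$. This gives $N(\mathbf{G},(\vee_{i=0}^{n}\sigma^{-i}\beta))=N(\pi_{\langle 0,n+M\rangle}(\mathbf{G}),\alpha^{n+M+1})$ and hence the inequality $h(\sigma,\beta)\le \ent(G,\alpha)$, since $\pi_{\langle 0,m\rangle}(\mathbf{G})\subseteq \star_{i=1}^{m}G$. (The index shift by $M$ is harmless, by the same subadditivity argument the paper uses at the end of its proof.)

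The reverse inequality, however, is where your proposal stops being a proof. You need
$\lim_m \frac{1}{m}\log N(\star_{i=1}^{m}G,\alpha^{m+1}) \le \lim_m \frac{1}{m}\log N(\pi_{\langle 0,m\rangle}(\mathbf{G}),\alpha^{m+1})$,
i.e.\ that the finite chains of length $m$ that do \emph{not} extend to points of $\mathbf{G}$ (equivalently, the chains $(x_0,\ldots,x_m)\in\star_{i=1}^{m}G$ with $x_m\notin\pi_0(\mathbf{G})$) contribute only subexponentially to the covering count. You propose a polynomial correction factor $p(m)$ but give no construction of it, and you explicitly label its existence as ``the main obstacle''; as written there is no argument that the hypothesis $\sigma(\mathbf{G})=\mathbf{G}$ (which controls backward extendability of infinite chains, not forward extendability of finite ones) together with compactness actually yields such a bound. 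So the proposal proves only one of the two inequalities. It is worth noting that you have put your finger on a point the paper itself treats very lightly: the paper's proof simply asserts ``$N(\mathbf{G},\mathcal{U}^{*})=N(\star_{i=1}^{M+1}G,\alpha^{M+2})$'' — which silently identifies covering $\pi_{\langle 0,M+1\rangle}(\mathbf{G})$ with covering $\star_{i=1}^{M+1}G$ — and builds the rest of the argument on that identification. To complete your proof (or to justify the paper's step) you would need an actual lemma here, e.g.\ a compactness/K\"onig-type argument showing that grid boxes meeting $\star_{i=1}^{m}G$ but not $\pi_{\langle 0,m\rangle}(\mathbf{G})$ can be absorbed without changing the exponential growth rate. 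Until that is supplied, the proposal has a genuine gap.
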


\begin{proof}

   Let $\alpha=\{\alpha_{1}, \dots, \alpha_{n}\}$ be a minimal open cover of $I_{0}$ by intervals.

Fix the positive integer $M$. Let $$\mathcal{V}=\left \{ \prod_{j=0}^{M} \alpha_{i_{j}} \times I^{\infty}: \alpha_{i_{j}} \in \alpha \text{ and } \left(\prod_{j=0}^{M} \alpha_{i_{j}} \times I^{\infty}\right) \cap \mathbf{G} \ne \emptyset \right \},$$ 
and let 
\begin{align*}
\mathcal{U}= &\left \{ \alpha_{k_{0}} \times \alpha_{k_{1}} \times \ldots \times \alpha_{k_{M}} \times \alpha_{k_{M+1}} \times I^{\infty}:  \{k_{j}\}_{j=0}^{M+1} \text{ is a sequence of} \right.\\
& \left.  \text{ members of } \{1, \ldots, n \} \text{ of length } M+2 \right. \big \}.
\end{align*}

%%%%%%%%%%%%%%%end p 14

For $v=\prod_{j=0}^{M} \alpha_{i_{j}} \times I^{\infty} \in \mathcal{V}$, $\sigma^{-1}(v)=I_{0} \times \prod_{j=0}^{M} \alpha_{i_{j}} \times I^{\infty}$. Then
 \begin{align*}
 \sigma^{-1}(\mathcal{V}) \vee \mathcal{V}& =  \left \{ \sigma^{-1}(v) \cap w: v=\prod_{j=0}^{M} \alpha_{i_{j}} \times I^{\infty}, w=\prod_{j=0}^{M} \alpha_{k_{j}} \times I^{\infty} \in \mathcal{V}  \right \} \\
 &= \left \{ \left(I_{0} \times \prod_{j=0}^{M} \alpha_{i_{j}} \times I^{\infty}\right) \cap \left(\prod_{j=0}^{M} \alpha_{k_{j}} \times I^{\infty}\right): \{i_{j}\}_{j=0}^{M} \text{ and } \{k_{j} \}_{j=0}^{M} \text{ are } \right. \\
 &\left.  \text{ finite sequences of members of }  \{1,\ldots,n \} \newline \text{ of length } M+1  \rule{0cm}{0.7cm} \right \} .
\end{align*}
\vphantom{}
 
If $v=\prod_{j=0}^{M} \alpha_{i_{j}} \times I^{\infty} \in \mathcal{V}$ and $w=\prod_{j=0}^{M} \alpha_{k_{j}} \times I^{\infty} \in \mathcal{V}$, then 

\begin{align*}
 \sigma^{-1}(v) \cap w &=\left( I_{0} \times \prod_{j=0}^{M} \alpha_{i_{j}} \times I^{\infty}\right) \cap \left(\prod_{j=0}^{M} \alpha_{k_{j}} \times I^{\infty}\right)\\
 &=\alpha_{k_{0}} \times \left(\alpha_{k_{1}} \cap \alpha_{i_{0}}\right) \times \ldots \times \left(\alpha_{k_{M}} \cap \alpha_{i_{M-1}}\right) \times \alpha_{i_{M}} \times I^{\infty} \\
 & \subset \alpha_{k_{0}} \times \alpha_{k_{1}} \times \ldots \times \alpha_{k_{M}} \times \alpha_{i_{M}} \times I^{\infty}.
 \end{align*}

%%%%%%%%%%%

Hence, the collection $\sigma^{-1}(\mathcal{V}) \vee \mathcal{V}$ refines the collection $\mathcal U.$
 Then $(\sigma^{-1}(\mathcal{V}) \vee \mathcal{V})^{*}$ refines the collection $ \mathcal{U}^{*}$, so $\mathcal{U}^{*} < (\sigma^{-1}(\mathcal{V}) \vee \mathcal{V})^{*}$ , and $N(\mathbf{G},\mathcal{U}^{*}) \le N(\mathbf{G}, (\sigma^{-1}(\mathcal{V}) \vee \mathcal{V})^{*}). $

\vphantom{}

But $ \mathcal{U}$ also refines  $\sigma^{-1}(\mathcal{V}) \vee \mathcal{V}$, and so $ \mathcal{U}^{*}$  refines  $(\sigma^{-1}(\mathcal{V}) \vee \mathcal{V})^{*}$. Thus, $N(\mathbf{G},\mathcal{U}^{*}) \ge N(\mathbf{G}, (\sigma^{-1}(\mathcal{V}) \vee \mathcal{V})^{*})$. Then $N(\mathbf{G},\mathcal{U}^{*}) = N( \mathbf{G},(\sigma^{-1}(\mathcal{V}) \vee \mathcal{V})^{*})$.

\vphantom{}

Note that $N(\mathbf{G},\mathcal{U}^{*}) =N(\star_{i=1}^{M+1}G,\alpha^{M+2})$.

\vphantom{}

  We can continue: By similar arguments, for each positive integer $l$, \\ $N(\mathbf{G},(\vee_{i=0}^{l} \sigma^{-i}\mathcal{V})^{*})=N(\mathbf{G},\alpha^{M+l+1} \times I^{\infty})=N(\star_{i=1}^{M+l+1}G,\alpha^{M+l+1})$. Now $\mathcal{V} = \alpha^{M+1} \times I^{\infty}$, and for $l$ a positive integer, $N(\mathbf{G},(\vee_{i=0}^{l} \sigma^{-i}\mathcal{V})^{*})=N(\star_{i=1}^{M+l}G,\alpha^{M+l+1})$. Then  $\log(N(\mathbf{G},(\vee_{i=0}^{l} \sigma^{-i}\mathcal{V})^{*}))=\log(N(\star_{i=1}^{M+l}G,\alpha^{M+l+1})).$ It follows that 

\begin{align*}
h(\sigma, \alpha^{M+1} \times I^{\infty})&= \lim_{l \to \infty} \frac{\log(N(\mathbf{G},(\vee_{i=0}^{l} \sigma^{-i}\mathcal{V})^{*}))}{l}
 = \lim_{l \to \infty} \frac{\log(N(\star_{i=1}^{M+l}G,\alpha^{M+l+1}))}{l},
\end{align*}
while $$\ent(G,\alpha) = \lim_{l \to \infty}\frac{\log(N(\star_{i=1}^{l}G,\alpha^{l+1}))}{l}.$$ 

%%%%%%%%%%%%%%%end p 15

\vphantom{}

For each positive integer $k$, let $\log(N(\star_{i=1}^{k}G,\alpha^{k+1})) = a_{k}$.\\ Then $\log(N(\star_{i=1}^{l}G,\alpha^{l+1})) = a_{l}$ and $\log(N(\star_{i=1}^{M+l}G,\alpha^{M+l+1})) = a_{M+l}$. Furthermore, $a_{l} \le a_{M+l} \le a_{M}+a_{l}$. (This is because  $N(\star_{i=1}^{M+l}G,\alpha^{M+l+1}) \le \newline  N(\star_{i=1}^{M}G,\alpha^{M+1})N(\star_{i=1}^{l}G,\alpha^{l+1})  $.) Then $\frac{a_{l}}{l} \le \frac{a_{l+M}}{l} \le \frac{a_{l}}{l}+ \frac{a_{M}}{l}$. By Lemma \ref{ent lim}, $\lim_{l \to \infty} \frac{a_{l}}{l}$ exists, and 
$$
\lim_{l \to \infty} \frac{a_{l}}{l} \le  \lim_{l \to \infty}\frac{a_{l+M}}{l} \le \lim_{l \to \infty} (\frac{a_{l}}{l}+\frac{a_{M}}{l})= \lim_{l \to \infty} \frac{a_{l}}{l}+ 
\lim_{l \to {\infty}}\frac{a_{M}}{l} = \lim_{l \to {\infty}}\frac{a_{l}}{l}.
$$
It follows that  $$\lim_{l \to \infty}\frac{\log(N(\star_{i=1}^{l}G,\alpha^{l+1}))}{l}= \lim_{l \to \infty}\frac{\log(N(\star_{i=1}^{M+l}G,\alpha^{M+l+1}))}{l},$$ and thus, $\ent(G,\alpha)=h(\sigma, (\alpha^{M+1}\times I^{\infty})^{*})$ for each positive integer $M$.

\end{proof}

\begin{theorem} \label{ent and sigma}
Suppose $G$ is a closed subset of $I_{0} \times I_{1}$, $\mathbf{G} =\star_{i=1}^{\infty}G$, and $\sigma(\mathbf{G})=\mathbf{G}$.  If $\alpha=\{\alpha_{1}, \dots, \alpha_{n}\}$ is a minimal open cover of $I_{0}$ by open intervals, then $\ent(G)=h(\sigma)$.

\end{theorem}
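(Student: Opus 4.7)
The plan is to prove the two inequalities $\ent(G) \le h(\sigma)$ and $h(\sigma) \le \ent(G)$ separately. Both reduce essentially immediately to Theorem \ref{sigma versus G}, once one observes (via Proposition \ref{grid cover}) that grid covers of the form $\alpha^{M+1}\times I^\infty$ are cofinal in the refinement order among open covers of $\mathbf{G}$.

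For $\ent(G)\le h(\sigma)$: Fix any minimal open cover $\alpha$ of $I_0$ by open intervals, and apply Theorem \ref{sigma versus G} with (say) $M=1$ to get
$$\ent(G,\alpha)=h(\sigma,(\alpha^{2}\times I^\infty)^{*}).$$
Since $(\alpha^{2}\times I^\infty)^{*}$ is an open cover of $\mathbf{G}$ in the subspace topology, the right-hand side is bounded above by $h(\sigma)=\sup_{\mathcal{U}} h(\sigma,\mathcal{U})$. Taking the supremum over all such $\alpha$ on the left gives $\ent(G)\le h(\sigma)$.

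For the reverse inequality, let $\mathcal{V}$ be an arbitrary open cover of $\mathbf{G}$ by sets open in $\mathbf{G}$. By Proposition \ref{grid cover}, there exists a minimal open cover $\tau$ of $[0,1]$ by open intervals and a positive integer $N$ such that the grid cover $T=\tau^{N+1}\times I^\infty$ has the property that $T^{*}=\{o\cap\mathbf{G}:o\in T\}$ refines $\mathcal{V}$; that is, $\mathcal{V}<T^{*}$. By the monotonicity of $h(\sigma,\cdot)$ in the refinement order (remark (2) following the definition of $h(T,\alpha)$), we obtain $h(\sigma,\mathcal{V})\le h(\sigma,T^{*})$, and Theorem \ref{sigma versus G} (applied with $\alpha:=\tau$ and $M:=N$) gives $h(\sigma,T^{*})=\ent(G,\tau)\le\ent(G)$. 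Taking the supremum over $\mathcal{V}$ yields $h(\sigma)\le\ent(G)$, completing the proof.

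The whole argument is essentially a bookkeeping corollary of Theorem \ref{sigma versus G}; the only conceptual step is recognizing that every open cover of $\mathbf{G}$ is refined by a grid cover already handled by that theorem, which is precisely what Proposition \ref{grid cover} supplies. I do not anticipate any genuine obstacle, but care is needed to keep straight the direction of refinement (i.e.\ that $\mathcal{V}<T^{*}$, not the reverse) so that the monotonicity remark applies with the correct sign.
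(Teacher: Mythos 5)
Your argument is correct and is essentially the paper's own proof: the paper disposes of the theorem in one sentence by noting that every open cover of $\mathbf{G}$ is refined by some grid cover $\alpha^{M+1}\times I^{\infty}$ and then invoking Theorem \ref{sigma versus G}, which is exactly the cofinality-plus-monotonicity argument you spell out as two inequalities. Your version merely makes explicit the bookkeeping (the direction of refinement and the supremum on each side) that the paper leaves implicit.
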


\begin{proof}
 Since each open cover of $\mathbf{G}$ is refined by the grid cover $\alpha^{M+1} \times I^{\infty}$ for some $M$ and minimal open cover $\alpha$ by intervals of $I_{0}$, the result follows.

\end{proof}

\noindent \textbf{Remark} Theorem \ref{ent and sigma} overlaps with Theorem 3.1 of \cite{KT}.

\begin{theorem} \label{f continuous} 

If $f:I \to I$ is a continuous function and $G$ is the graph of $f^{-1}$, then $h(f)=\ent(G)$.

\end{theorem}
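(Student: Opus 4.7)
The plan is to identify the Mahavier product $\mathbf{G}=\star_{i=1}^{\infty}G$ with the classical inverse limit $\underset{\longleftarrow}{\lim}(I,f)$, and then to combine Theorem \ref{ent and sigma} with the Bowen theorem cited earlier in the paper to pass from $\ent(G)$ to $h(\sigma)$ to $h(f)$.

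First I would unpack the definition of $G$. By definition $G=\Gamma(f^{-1})=\{(y,x)\in I\times I:y=f(x)\}$, so a sequence $(x_0,x_1,\ldots)$ lies in $\mathbf{G}$ if and only if $(x_{i-1},x_i)\in G$ for every $i\ge 1$, equivalently $x_{i-1}=f(x_i)$ for every $i\ge 1$. That is precisely the defining condition of the classical inverse limit $\underset{\longleftarrow}{\lim}(I,f)$, so $\mathbf{G}=\underset{\longleftarrow}{\lim}(I,f)$. By the Brouwer fixed-point theorem $f$ has a fixed point $p$, so $(p,p,\ldots)\in\mathbf{G}$ and in particular $\mathbf{G}\ne\emptyset$.

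Second, because $f$ is single-valued the shift $\sigma\colon\mathbf{G}\to\mathbf{G}$ is the usual classical shift homeomorphism, whose continuous inverse sends $(x_0,x_1,\ldots)$ to $(f(x_0),x_0,x_1,\ldots)$. In particular $\sigma(\mathbf{G})=\mathbf{G}$, so the hypotheses of Theorem \ref{ent and sigma} are satisfied and I obtain $\ent(G)=h(\sigma)$.

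Third, I would apply the Bowen theorem (Proposition 5.2 of \cite{Bo2}, quoted earlier in the paper) to conclude $h(\sigma)=h(f)$. If $f$ is surjective this is immediate. If not, I would replace $I$ by the maximal invariant set $Y=\bigcap_{n\ge 0}f^n(I)$, on which $f|Y$ is a continuous surjection, observe that $\underset{\longleftarrow}{\lim}(I,f)=\underset{\longleftarrow}{\lim}(Y,f|Y)$ so the two shifts coincide, apply Bowen's theorem to $f|Y$ to get $h(\sigma)=h(f|Y)$, and finish with the standard identity $h(f)=h(f|Y)$ (the dynamics on $I\setminus Y$ is wandering, cf.\ Theorem \ref{Bowen1}). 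The main obstacle, if any, is this small piece of bookkeeping around non-surjective $f$; the rest is a direct translation between the Mahavier-product picture and the classical inverse-limit picture, and the two nontrivial ingredients are already available as Theorem \ref{ent and sigma} and Bowen's theorem.
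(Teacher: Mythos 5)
Your proof is correct and follows essentially the same route as the paper's, which simply cites Theorem \ref{sigma versus G} (hence Theorem \ref{ent and sigma}) to get $\ent(G)=h(\sigma)$ and Bowen's result to get $h(\sigma)=h(f)$. In fact you are more careful than the paper: the quoted Bowen proposition assumes $f$ is surjective, and your reduction to the maximal invariant set $Y=\bigcap_{n\ge 0}f^{n}(I)$ supplies exactly the bookkeeping that the paper's one-line proof omits.
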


\begin{proof} This follows from Theorem \ref{sigma versus G}, and Bowen's result that $h(f)=h(\sigma)$ in \cite{Bo2}.

\end{proof}

%%%%%%%%%%%%%%%%entropy in I^n

\section{Topological entropy of closed subsets of $[0,1]^{N+1}$}

If $X$ is a compact metric space and $f:X\to X$ is continuous, then for each positive integer $k$, $h(f^k)=kh(f)$ (See \cite[Theorem 7.10]{Walters book}).  This well-known result for continuous mappings does \underline{not} hold for an upper semicontinuous mappings $F:X\to 2^X$.

Suppose $F:X\to 2^X$ is upper semicontinuous. We can define $F^2:X\to 2^X$ by $F^2(x)=\bigcup_{y\in F(x)} F(y).$ Then, inductively, for $n>2$, $F^n(x)=\bigcup_{y\in F^{n-1}(x)} F(y).$
In \cite{KT}, using Bowen's ideas of $(n,\epsilon)$-separated and $(n,\epsilon)$-spanning, they define the topological entropy $h(F).$ They show that, $h(\sigma)=h(F)$ (where $\sigma$ denotes the shift $\sigma: \varprojlim (X,f) \to \varprojlim (X,f)$).
Hence if $X=[0,1],$ $G$ denotes the graph of $F$ (and thus $G$ is a closed subset of $[0,1]^2$), $h(F)=h(\sigma)=\ent(G^{-1})=\ent(G).$ (See Theorem \ref{G inverse ent} and Theorem \ref{ent and sigma} of the previous section). 
In \cite{KT}, the folllowing theorem (re-phrased a bit for our setting) is proved:

\begin{theorem}\cite[Theorem 5.4]{KT}
	Suppose $X$ is a compact metric space, $F:X\to 2^X$ is upper semicontinuous, and $k\in \mathbb N.$ Then 
	$$
	h(F)\le h(F^k)\le k h(F).
	$$
\end{theorem}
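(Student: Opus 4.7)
The plan is to work at the level of the shift maps on the generalized inverse limits, using the identifications $h(F)=h(\sigma_F)$ on $\varprojlim F$ and $h(F^k)=h(\sigma_{F^k})$ on $\varprojlim F^k$ that are in place in the paragraph immediately before the theorem, together with the continuous-map identity $h(\sigma_F^k)=k\,h(\sigma_F)$ (the Walters Thm.\ 7.10 quoted earlier in the paper). Two different maps between the inverse limit spaces then furnish the two inequalities.

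For the upper bound $h(F^k)\le k\,h(F)$, define the ``every $k$th coordinate'' map
$$
\phi\colon\varprojlim F \to \varprojlim F^k,\qquad \phi(x_0,x_1,x_2,\ldots)=(x_0,x_k,x_{2k},\ldots).
$$
Composing $k$ consecutive relations $x_{i-1}\in F(x_i)$ yields $x_{(j-1)k}\in F^k(x_{jk})$, so $\phi$ is well defined and continuous; surjectivity follows because each relation $y_{j-1}\in F^k(y_j)$ supplies the intermediate $F$-links needed to splice any sequence in $\varprojlim F^k$ back into a preimage in $\varprojlim F$. A direct check gives $\phi\circ\sigma_F^k=\sigma_{F^k}\circ\phi$, so the factor theorem for continuous surjections (stated in the paper just after the definition of $h(T)$) yields $h(\sigma_{F^k})\le h(\sigma_F^k)=k\,h(\sigma_F)$.

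For the lower bound $h(F)\le h(F^k)$, I would use the full family of phase-shifted projections $\phi_j(x_0,x_1,\ldots)=(x_j,x_{j+k},x_{j+2k},\ldots)\in\varprojlim F^k$ for $0\le j\le k-1$, and bundle them into
$$
\Phi\colon\varprojlim F\to(\varprojlim F^k)^k,\qquad \Phi(x)=\bigl(\phi_0(x),\phi_1(x),\ldots,\phi_{k-1}(x)\bigr).
$$
Since $x_n$ can be read off as the $\lfloor n/k\rfloor$th entry of the $(n\bmod k)$th block, $\Phi$ is a continuous injection, hence a homeomorphism onto its closed image $Y\subset(\varprojlim F^k)^k$. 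Advancing $\sigma_F$ by $k$ steps advances each $\phi_j$ by one step of $\sigma_{F^k}$, so
$$
\Phi\circ\sigma_F^k \;=\; (\sigma_{F^k})^{\times k}\circ\Phi,
$$
which says $\sigma_F^k$ is topologically conjugate (via $\Phi$) to the restriction of the product map $(\sigma_{F^k})^{\times k}$ to $Y$. Combining this conjugacy with the restriction inequality $h(T|_Y)\le h(T)$ and the product formula $h\bigl((\sigma_{F^k})^{\times k}\bigr)=k\,h(\sigma_{F^k})$ yields
$$
k\,h(\sigma_F)=h(\sigma_F^k)=h\bigl((\sigma_{F^k})^{\times k}\bigm|_Y\bigr)\le h\bigl((\sigma_{F^k})^{\times k}\bigr)=k\,h(\sigma_{F^k}),
$$
and dividing by $k$ gives $h(F)\le h(F^k)$.

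The most delicate step I anticipate is the invocation of the restriction inequality: Remark (4) as formulated in the paper requires $T(Y)=Y$, whereas here only the forward containment $(\sigma_{F^k})^{\times k}(Y)\subset Y$ is automatic unless $F$ (equivalently $\sigma_F$) is surjective. For general $F$ the fix is either to use that $h(T|_Y)\le h(T)$ holds for every closed forward-invariant $Y$ (the form established in Walters' book), or to first pass to the kernel $\mathbf{G}^{*}=\bigcap_{i\ge 0}\sigma^{i}(\mathbf{G})$ on which the shift is genuinely surjective, and then appeal to Theorem \ref{Bowen1} to see the entropy is unchanged by this restriction.
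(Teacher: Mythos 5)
The paper does not actually prove this theorem; it imports it verbatim from \cite{KT} (Theorem 5.4), where the inequalities are obtained directly from Bowen's $(n,\epsilon)$-spanning and $(n,\epsilon)$-separated sets for the iterates $F^k$. Your argument is therefore a genuinely different route, and it is correct: you transfer everything to the shifts via the identifications $h(F)=h(\sigma_F)$ and $h(F^k)=h(\sigma_{F^k})$ recorded in the paragraph preceding the theorem, get the upper bound from the continuous surjection $(x_0,x_1,\ldots)\mapsto(x_0,x_k,x_{2k},\ldots)$ intertwining $\sigma_F^k$ with $\sigma_{F^k}$ together with $h(\sigma_F^k)=k\,h(\sigma_F)$, and get the lower bound by embedding $\varprojlim F$ into $(\varprojlim F^k)^k$ so that $\sigma_F^k$ becomes the restriction of $(\sigma_{F^k})^{\times k}$. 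The well-definedness and surjectivity of $\phi$ are exactly the statement that a relation $y_{j-1}\in F^k(y_j)$ is the same as a chain of $k$ consecutive $F$-links, which is how $F^k$ is defined, so those checks go through; what the approach buys is a proof using only functorial properties of entropy for single-valued continuous maps, with no separate $(n,\epsilon)$ bookkeeping. Two caveats. First, the lower bound leans on the product formula $h(T_1\times T_2)=h(T_1)+h(T_2)$ and on the restriction inequality for a closed set that is merely forward invariant; neither is stated in this paper (Remark (4) assumes $T(Y)=Y$), but both are in Walters, so the argument is complete once they are cited explicitly --- and you correctly identified the forward-invariance issue as the delicate point. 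Second, your fallback repair of ``passing to the kernel and applying Theorem \ref{Bowen1}'' conflates the eventual image $\bigcap_{i\ge 0}\sigma^{i}(\mathbf{G})$ with the nonwandering set: Theorem \ref{Bowen1} concerns the latter. It can be patched (the nonwandering set sits inside the eventual image, and the shift maps the eventual image onto itself, so Remark (4) applies there), but the forward-invariant form of the restriction inequality is the cleaner fix and makes the detour through Bowen's theorem unnecessary.
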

It is not the case $h(F^k)= k h(F)$ always as the following example (from \cite{KT}) shows.

\begin{exam} \cite[Example 5.7]{KT} \label{ExamKT}
	Define $F:I\to 2^I$ by 
	$$
	F(x)=\left\{ 
	\begin{array}{ll}
	\{ x+\frac{1}{2}, \frac{1}{2}-x\}, & x\le \frac{1}{2} \\
	\{ x-\frac{1}{2}, \frac{3}{2}-x\}, & x\ge \frac{1}{2}
	\end{array}
	\right.
	$$
	Then $F^2\neq F,$ but $h(F^2)=h(F)=\log 2.$
\end{exam}

	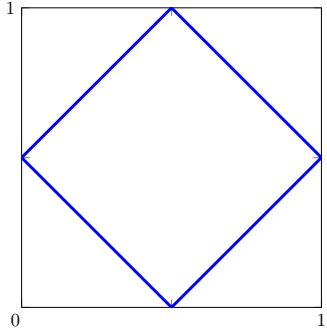
\begin{figure}[h]
	\centering
	\begin{tikzpicture}[scale=0.7]
	\begin{axis}[ 
	unit vector ratio*=1 1 1, 
	xtick={0,0.5,1}, xticklabels={$0 \ \ $,,1},
	ytick={0,0.5,1}, yticklabels={,,1},
	xmin=0,     
	xmax=1.,   
	ymin=0,     
	ymax=1,
	] 
	\addplot [blue, ultra thick] coordinates {(0,0.5) (0.5,1) (1,0.5) (0.5,0) (0,0.5)};
	\end{axis}
	\end{tikzpicture}
	\caption{The graph of the function $F$ from Example \ref{ExamKT}}
\end{figure}

However, an analogous result to $$h(f^k)= k h(f)$$ holds for Mahavier products. Note that if $G=\Gamma(F^{-1})$, where $F:X\to 2^X$ is upper semicontinuous, then $\Gamma(F^k)=\pi_{\{k,0\}} (\star_{i=1}^k G)$. Thus, only the beginning input and final output are kept, while the ``intermediate'' values are forgotten, when iterating a function in the usual way. This does not happen with the Mahavier product: \textit{all} possibities are retained.

\vphantom{}

The purpose of this section is to show that if $G$ is a closed subset of $I^2,$ and $k$ is a positive integer, then
$$
\ent(\star_{i=1}^k G)=k \ent(G).
$$

 Before we can show this, we must define and explore $\ent(H)$ for $H$ a closed subset of $I^{N+1},$ for $N$ a positive integer.

\begin{enumerate}

\item Let $\alpha=\{\alpha_{1}, \dots, \alpha_{n}\}$ be a minimal open cover of $I_{0}$ by intervals.    Let 
$$\beta =\{ \Pi_{j=0}^{N} \alpha_{k_{j}} :  k_{j} \in \langle 1,n \rangle, 0 \le j \le N \}.$$

Hence, $\beta$ is the grid cover of $\Pi_{i=0}^{N} I_{i}$ determined by $\alpha$, and $\beta$ therefore covers $H$. Since $N^{*}(\beta)=n(N+1):=n_{\beta}$, we can list the members of $\beta=\{\beta_{1}, \beta_{2}, \ldots, \beta_{n_{\beta}} \}$. 
For each positive integer $m>1$, let $$\beta^{m}=\{ \Pi_{j=0}^{m-1} \beta_{k_{j}} :  k_{j} \in \langle 1,n_{\beta} \rangle, 0 \le j \le m-1 \}. $$ 

\begin{comment}

\item If $K$ is a closed subset of $\Pi_{i=0}^{m-1}I_{i}$ ($m>1$ a positive integer or $m=\infty$), and $U$ is a collection of open sets in $\Pi_{i=0}^{m-1}I_{i}$ that covers $K$, recall that $N(K,U)$ denotes the least cardinality of a subcover of $K$ in $U$. 

\end{comment} 

\item Then $\beta \star \beta=\{ \beta_{i} \star \beta_{j}:  1 \le i,j \le n_{\beta} \}$ is a cover of $H \star H$ by open subsets of $\Pi_{i=0}^{2N} I_{i}$, and $N(H \star H,\beta \star \beta) \le n_{\beta}^{2}$. Note that $\beta \star \beta$ refines $\alpha^{2N+1}$ and $\beta \star \beta$ is refined by $\alpha^{2N+1}$, so $N(H \star H, \beta \star \beta)=N(H \star H, \alpha^{2N+1})$.

\begin{comment}

\begin{proof}

 Since $\alpha^{2N+1}<\beta \star\beta$, and $\alpha^{2N+1}\cap ( H \star H)<(\beta \star\beta) \cap (H \star H)$, $N(H \star H,\alpha^{2N+1})\le N(H \star H,\beta \star\beta)$. Suppose $A=\{A_{1},A_{2}, \dots, A_{k} \}$ is a cover of  $H \star H$ in $\alpha^{2N+1}$ of minimal cardinality (so $k=N(H \star H,\alpha^{2N+1})$). Since each $A_{i} \in \beta \star \beta$, $A$ is also a cover of $H \star H$ in $\beta \star \beta$. Then $N(H \star H,\alpha^{2N+1})\ge N(H \star H,\beta \star\beta)$, too, so 
$N(H \star H,\alpha^{2N+1})= N(H \star H,\beta \star\beta)$.

\end{proof}

\end{comment}

\item   We can continue this process for each $m \in \mathbb{N}$:
$$ \star_{i=1}^{m}\beta=\{ \star_{j=1}^{m} \beta_{k_{j}} :  k_{j} \in \langle 1,n_{\beta} \rangle, 1 \le j \le m \} $$ is an open cover of $\star_{i=1}^{m}H$ and $N(\star_{i=1}^{m}H,\star_{i=1}^{m}\beta) \le n_{\beta}^{m}$. Again, a minimal subcover of $ \star_{i=1}^{m}H$ by elements of $ \star_{i=1}^{m} \beta$ has the same number of elements as a minimal subcover of $ \star_{i=1}^{m}H$ by elements of $ \alpha^{mN+1}$. Since using the cover $ \star_{i=1}^{m} \beta$ is sometimes more convenient, we continue to use both covers. Without loss of generality, we may assume that a minimal subcover (in both $\alpha^{mN+1}$ and $ \star_{i=1}^{m} \beta$) consists of sets of the form $ \Pi_{j=0}^{mN} \alpha_{k_{j}}$, where each $k_{j} \in \langle 1,n \rangle$.
  
 %%%%%%%%%%%%%%end p 16
  
\item     Suppose $H$ is a closed subset of $\Pi_{i=0}^{N}I_{i}$. Let $\mathbf{H}=\star_{i=1}^{\infty}H$.

    \begin{itemize}
     \item For each positive integer $m$, $0 \le N(\star_{i=1}^{m}H,\alpha^{mN+1})= \newline N(\star_{i=1}^{m}H,\star_{i=1}^{m}\beta)  \le n^{mN+1}$, and $0 \le N(\star_{i=1}^{m}H,\alpha^{mN+1})= \newline N(\star_{i=1}^{m}H,\star_{i=1}^{m}\beta) 
 \le n_{\beta}^m$. If      $\mathbf{H} \ne \emptyset$,  $0 < N(\star_{i=1}^{m}H, \star_{i=1}^{m}\beta)$.

     \item If      $\mathbf{H} \ne \emptyset$,  $1= N(\star_{i=1}^{m}H,\star_{i=1}^{m}\beta)$ if and only if there is a sequence $\alpha_{j_{0}}, \alpha_{j_{1}}, \ldots \alpha_{j_{mN}}$ (with each $1 \le j_{i}\le n$) such that $\mathbf{H} \subset (\alpha_{j_{0}} \times \ldots \times \alpha_{j_{mN}}) \times I^{\infty}$.

\item As before, if $\alpha, \gamma$ are both minimal open covers of $I_{0}$ by open intervals and $\alpha < \gamma$, then for each $m>0$, $N(\star_{i=1}^{m}H,\alpha^{mN+1}) \le N(\star_{i=1}^{m}H,\gamma^{mN+1})$. 

 \item  As before, if $\alpha, \gamma$ are both minimal open covers of $I_{0}$ by open intervals, then for each $m>0$, $\alpha^{mN+1} \vee \gamma^{mN+1} = (\alpha \vee \gamma)^{mN+1}$, and $N(\star_{i=1}^{m}H,\alpha^{m+1} \vee \gamma^{m+1}) \le N(\star_{i=1}^{m}H,\alpha^{mN+1}) N(\star_{i=1}^{m}H,\gamma^{mN+1})$.

\item If $K$ is a closed subset of $H \subset \Pi_{i=0}^{N}I_{i}$, $m$ is a positive integer, and $\alpha =\{\alpha_{1}, \dots, \alpha_{n}\}$ is a minimal open cover of $I_{0}$ by open intervals, then  $N(\star_{i=1}^{m}K,\alpha^{mN+1}) \le N(\star_{i=1}^{m}H,\alpha^{mN+1})$.

\item Suppose $l$ and $m$ are positive integers. Then $\alpha^{lN+1}$ is a grid cover of $\star_{i=1}^{l}H$, $\alpha^{lN+mN+1}$ is a grid cover of $\star_{i=1}^{l+m}H$, and $\alpha^{lN+1} \times \Pi_{i=lN+1}^{mN+1}I_{i}$ is an open cover of $\star_{i=1}^{l+m}H$.  Then $N(\star_{i=1}^{l}H,\alpha^{lN+1}) \le N(\star_{i=1}^{l+m}H,\alpha^{lN+1} \times \Pi_{i=lN+1}^{lN+mN}I_{i}) \le N(\star_{i=1}^{l+m}H,\alpha^{lN+mN+1})$.

 \end{itemize}

\item 
If $\alpha=\{\alpha_{1}, \dots, \alpha_{n}\}$ is a minimal open cover of $I_{0}$ by intervals, $H$ is a closed subset of $\Pi_{i=0}^{N}I_{i}$ and $\mathbf{H} \ne \emptyset$, then $ \underset{m \to \infty}{\lim} \frac{\log N(\star_{i=1}^{m}H,\alpha^{mN+1})}{m} = \underset{m \to \infty}{\lim} \frac{\log N(\star_{i=1}^{m}H,\star_{i=1}^{m}\beta)}{m} $ exists.

\begin{proof}

Let $a_{m}= \log N(\star_{i=1}^{m}H,\alpha^{mN+1})=\log N(\star_{i=1}^{m}H,\star_{i=1}^{m}\beta)$ for each $m \in \mathbb{N}$. Then $1 \le   N(\star_{i=1}^{m}H,\alpha^{mN+1}) \le n^{mN+1}$, so $$ 0 \le a_{m} =  \log N(\star_{i=1}^{m}H,\alpha^{mN+1}) \le (mN+1) \log n.$$ By Lemma \ref{Walters}, it suffices to show that $a_{m+k} \le a_{m} +a_{k}$.  We have $$\alpha^{m+k+1} \subset  (\star_{i=1}^{m} \beta) \star (\star_{i=1}^{k} \beta)= \star_{i=1}^{m+k} \beta, $$ so $\alpha^{m+k+1}$  refines  $(\star_{i=1}^{m} \beta) \star (\star_{i=1}^{k} \beta).$ 

Then $$N(\star_{i=1}^{m+k}H,\alpha^{mN+kN+1}) = N( \star_{i=1}^{m+k}H,(\star_{i=1}^{m} \beta) \star (\star_{i=1}^{k} \beta)).$$

Since $N(\star_{i=1}^{m}H,\alpha^{mN+1})$ is the cardinality of a minimal subcover of $\star_{i=1}^{m} H$  in $\star_{i=1}^{m} \beta$, and $N(\star_{i=1}^{k}H,\alpha^{kN+1})$ is the cardinality of a minimal subcover of $\star_{i=1}^{k} H$   in $\star_{i=1}^{k} \beta$, $(\star_{i=1}^{m} \beta) \star (\star_{i=m+1}^{m+k} \beta)$ is a cover of $\star_{i=1}^{m+k}H$  in $\Pi_{i=0}^{mN+kN}I_{i}$. \\

Thus, 
\begin{align*}
&N(\star_{i=1}^{m+k}H,\alpha^{mN+kN+1}) =  N(\star_{i=1}^{m+k}H, (\star_{i=1}^{m} \beta) \star (\star_{i=m+1}^{m+k} \beta) )  =  \\
& N( \star_{i=1}^{m+k}H,(\star_{i=1}^{m} \beta) \star (\star_{i=1}^{k} \beta) ) \le  N(\star_{i=1}^{m}H,\star_{i=1}^{m} \beta)  N( \star_{i=1}^{k}H,\star_{i=1}^{k} \beta),
\end{align*}

and we have 
\begin{align*}
a_{m+k} &= \log(N(\star_{i=1}^{m+k}H,\alpha^{mN+kN+1}) ) \le \log(  N(\star_{i=1}^{m}H,\star_{i=1}^{m} \beta)  N(\star_{i=1}^{k}H, \star_{i=1}^{k} \beta) )\\
&=\log(  N(\star_{i=1}^{m}H,\star_{i=1}^{m} \beta)+  \log (N( \star_{i=1}^{k}H,\star_{i=1}^{k} \beta) )=a_{m}+a_{k} .
\end{align*}
\end{proof}

%%%%%%%%%%%%%%end p 17

\item If $\mathbf{H} \ne \emptyset$, define $\ent(H,\alpha)$ to be 

  \[ 
\ent(H,\alpha) = \lim_{m \to \infty} \frac{\log N(\star_{i=1}^{m}H,\alpha^{mN+1})}{m}. 
\]

If $\mathbf{H} = \emptyset$, define $\ent(H,\alpha)=0$.

\item We have the following:

\begin{enumerate}

\item $\ent(H,\alpha) \ge 0$.

\item If $\alpha < \beta$, $\alpha, \beta$ both minimal covers of $I_{0}$ by open intervals, then $\ent(H,\alpha) \le \ent(H,\beta)$.

\item If $K$ is a closed subset of $H \subset \Pi_{i=0}^{N}I_{i}$, $m$ is a positive integer, and $\alpha =\{\alpha_{1}, \dots, \alpha_{n}\}$ is a minimal open cover of $I_{0}$ by open intervals, then $\ent(K,\alpha) \le \ent(H,\alpha)$. 

\end{enumerate}

\item Finally, we define $\ent(H)= \underset{\alpha} {\sup} \{\ent(H, \alpha)\}$, where $\alpha$ ranges over all minimal covers of $I_{0}$ by open intervals (in $I_{0})$.

\end{enumerate}

\begin{theorem}
Let $H$   be a closed subset of $\Pi_{i=0}^{N}I_{i}$ and $H^{-1}=\{(x_{N},x_{N-1}, \dots, x_{1},x_{0}): (x_{0},x_{1}, \dots, x_{N-1},x_{N}) \in H \}$. Then $\ent(H)=\ent(H^{-1})$.

\end{theorem}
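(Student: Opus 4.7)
The plan is to generalize the proof of Theorem~\ref{G inverse ent} in a completely parallel way. The two main ingredients are: (i) a bijection between $\star_{i=1}^{m} H$ and $\star_{i=1}^{m} H^{-1}$ given by reversing the coordinate sequence, and (ii) the fact that reversal maps the grid cover $\alpha^{mN+1}$ bijectively to itself.

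First I would verify the bijection. A point $(x_{0},x_{1},\ldots,x_{mN}) \in \Pi_{i=0}^{mN} I_{i}$ lies in $\star_{i=1}^{m}H$ precisely when $(x_{jN},x_{jN+1},\ldots,x_{(j+1)N}) \in H$ for each $0 \le j \le m-1$. Setting $y_{k}=x_{mN-k}$, one checks that the block $(y_{jN},\ldots,y_{(j+1)N})$ is exactly $(x_{(m-j)N},x_{(m-j)N-1},\ldots,x_{(m-j-1)N})$, which is the reverse of a block belonging to $H$, and therefore belongs to $H^{-1}$. Consequently the map $(x_{0},\ldots,x_{mN}) \mapsto (x_{mN},\ldots,x_{0})$ is a bijection from $\star_{i=1}^{m}H$ onto $\star_{i=1}^{m}H^{-1}$.

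Next, fix a minimal open cover $\alpha=\{\alpha_{1},\ldots,\alpha_{n}\}$ of $I_{0}$ by open intervals. A box $\alpha_{i_{0}} \times \alpha_{i_{1}} \times \cdots \times \alpha_{i_{mN}} \in \alpha^{mN+1}$ has reverse $\alpha_{i_{mN}} \times \alpha_{i_{mN-1}} \times \cdots \times \alpha_{i_{0}}$, which is again a member of $\alpha^{mN+1}$; and a point lies in a box iff its reverse lies in the reversed box. Therefore, if $\{B_{1},\ldots,B_{k}\}$ is a minimum-cardinality subcover of $\star_{i=1}^{m}H$ in $\alpha^{mN+1}$, the reversed collection is a subcover of $\star_{i=1}^{m}H^{-1}$ in $\alpha^{mN+1}$ of the same cardinality, and symmetrically. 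This yields
\[
N(\star_{i=1}^{m}H,\alpha^{mN+1}) = N(\star_{i=1}^{m}H^{-1},\alpha^{mN+1})
\]
for every positive integer $m$.

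Taking logarithms, dividing by $m$, and passing to the limit gives $\ent(H,\alpha) = \ent(H^{-1},\alpha)$ for every minimal open cover $\alpha$ of $I_{0}$ by open intervals. Taking the supremum over all such $\alpha$ yields $\ent(H) = \ent(H^{-1})$. I do not anticipate any real obstacle here; the only bookkeeping item requiring care is making sure that the indices match up correctly under the reversal, so that the $H$-constraint on consecutive $(N+1)$-blocks translates exactly into the $H^{-1}$-constraint on the reversed blocks. This is an entirely analogous (and no more difficult) argument than the one in the proof of Theorem~\ref{G inverse ent} for the case $N=1$.
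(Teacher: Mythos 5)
Your proposal is correct and follows exactly the route the paper intends: the paper omits this proof, stating only that it is similar to that of Theorem~\ref{G inverse ent}, and your argument is precisely that generalization, with the coordinate-reversal bijection between $\star_{i=1}^{m}H$ and $\star_{i=1}^{m}H^{-1}$ and the reversal-invariance of the grid cover $\alpha^{mN+1}$ giving $N(\star_{i=1}^{m}H,\alpha^{mN+1})=N(\star_{i=1}^{m}H^{-1},\alpha^{mN+1})$ for all $m$. The block-index bookkeeping you carry out is the only point needing care, and you have done it correctly.
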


\begin{proof} The proof is similar to that of Theorem \ref{G inverse ent}, so we omit it.

\end{proof}

\begin{prop}

 If $\mathcal{V}$ is an open cover (in $I^{\infty}$) of $\star^{\infty}_{i=1}H$, with $H$ a closed subset of $ \Pi_{i=0}^{N} I_i$, then $\sigma^{-N}(\mathcal{V}):=\{\sigma^{-N}(v):v \in \mathcal{V}\}=\{\Pi_{i=0}^{N-1}I_{i} \times v:v \in \mathcal{V}\}$ is also an open cover (in $I^{\infty}$) of  $\star^{\infty}_{i=1}H$. 
 \end{prop}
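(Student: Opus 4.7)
The plan is to mirror the proof of Proposition \ref{inverse cover}, simply replacing one application of the shift with $N$ applications. The only new ingredient I will need is the previously established fact that $\sigma^N(\mathbf{H}) \subset \mathbf{H}$ when $H \subset \Pi_{i=0}^N I_i$, which is exactly the second statement of the proposition that says ``If $G$ is a nonempty closed subset of $I_{0} \times \cdots \times I_n$ and $\mathbf{G}\ne \emptyset$, then $\sigma^n(\mathbf{G}) \subset \mathbf{G}$.''

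First I would fix an arbitrary point $x=(x_0,x_1,x_2,\ldots) \in \mathbf{H}=\star_{i=1}^{\infty} H$. By the previously cited proposition, $\sigma^N(x) \in \mathbf{H}$, so since $\mathcal{V}$ covers $\mathbf{H}$ there exists some $v \in \mathcal{V}$ with $\sigma^N(x) = (x_N,x_{N+1},\ldots) \in v$.

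Next I would verify the set-theoretic identity $\sigma^{-N}(v) = \Pi_{i=0}^{N-1}I_i \times v$: a point $y=(y_0,y_1,\ldots) \in I^{\infty}$ lies in $\sigma^{-N}(v)$ iff $\sigma^N(y)=(y_N,y_{N+1},\ldots) \in v$, which by the definition of the product is the same as $y \in \Pi_{i=0}^{N-1}I_i \times v$. Consequently $x \in \sigma^{-N}(v) \in \sigma^{-N}(\mathcal{V})$, which shows that $\sigma^{-N}(\mathcal{V})$ covers $\mathbf{H}$. Openness of each $\sigma^{-N}(v)=\Pi_{i=0}^{N-1}I_i \times v$ in $I^{\infty}$ is immediate since $v$ is open and the first $N$ factors are the whole interval.

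There is no real obstacle here; the argument is essentially formal, and the only conceptual point is remembering to use $\sigma^N$ rather than $\sigma$ so that the shift maps $\mathbf{H}$ into itself (for $N>1$ the single shift $\sigma$ need not do so, as illustrated by Example 3 in the excerpt).
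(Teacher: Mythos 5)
Your argument is correct and is exactly the route the paper intends: the paper omits this proof as ``similar to that of Proposition \ref{inverse cover},'' and your write-up is precisely that adaptation, using $\sigma^{N}(\mathbf{H})\subset\mathbf{H}$ in place of $\sigma(\mathbf{G})\subset\mathbf{G}$ and the identity $\sigma^{-N}(v)=\Pi_{i=0}^{N-1}I_{i}\times v$. Nothing is missing.
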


\begin{proof} 
The proof is similar to that of Proposition \ref{inverse cover}, so we omit it.

\end{proof}

Suppose $H$ is a closed subset of $ \Pi_{i=0}^{N}I_{i}$ and $\mathbf{H}=\star_{i=1}^{\infty}H$. If $\mathcal{U}$ is an open cover of $I^{\infty}$, let $\mathcal{U}^{*}= \{ u \cap \mathbf{H}: u \in \mathcal{U} \}$ denote the corresponding open cover of $\mathbf{H}$ by open sets in $\mathbf{H}$.

\begin{theorem}
Suppose $H$ is a closed subset of  $ \Pi_{i=0}^{N}I_{i}$, $\mathbf{H} =\star_{i=1}^{\infty}H\ne \emptyset$,  and $\sigma^{N}(\mathbf{H})=\mathbf{H}$.  Suppose $M$ is a positive integer. If $\alpha=\{\alpha_{1}, \dots, \alpha_{n}\}$ is a minimal open cover of $I_{0}$ by open  intervals, then $\ent(H,\alpha)=h(\sigma^{N},(\alpha^{MN+1} \times I^{\infty})^{*})$.

\end{theorem}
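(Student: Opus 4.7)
\smallskip

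\noindent\textbf{Proof proposal.} The plan is to mirror the proof of Theorem \ref{sigma versus G}, with $\sigma$ replaced by $\sigma^N$ and with every ``grid jump'' carrying $N$ coordinates at a time rather than one. Set $\mathcal{V}=\alpha^{MN+1}\times I^{\infty}$, viewed via $\mathcal{V}^{*}$ as an open cover of $\mathbf{H}$ in the subspace $\mathbf{H}$. First I would describe $\sigma^{-N}(\mathcal{V})$: since $\sigma^{-N}(v)=\Pi_{i=0}^{N-1}I_i\times v$ for $v\in\mathcal{V}$, the cover $\sigma^{-N}(\mathcal{V})$ simply pre-pends $N$ free coordinates to every box. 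Consequently a typical element of $\sigma^{-N}(\mathcal{V})\vee\mathcal{V}$ has the form
$$
\bigl(\Pi_{j=0}^{N-1}I_j\times\Pi_{j=0}^{MN}\alpha_{i_j}\times I^{\infty}\bigr)\cap\bigl(\Pi_{j=0}^{MN}\alpha_{k_j}\times I^{\infty}\bigr),
$$
which on the first $N$ coordinates is $\Pi_{j=0}^{N-1}\alpha_{k_j}$ and on the next $MN+1$ coordinates is an intersection of two $\alpha$-boxes.

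Next I would argue that the two collections $(\sigma^{-N}(\mathcal{V})\vee\mathcal{V})^{*}$ and $(\alpha^{(M+1)N+1}\times I^{\infty})^{*}$ refine one another on $\mathbf{H}$: each member of the join is contained in a unique member of $\alpha^{(M+1)N+1}\times I^{\infty}$ by forgetting the intersections on shared coordinates, and conversely each nonempty $\alpha^{(M+1)N+1}$-box that meets $\mathbf{H}$ is a member of the join because $\mathbf{H}$ lies in $\star_{i=1}^{\infty}H$. Hence
$$
N\bigl(\mathbf{H},(\sigma^{-N}(\mathcal{V})\vee\mathcal{V})^{*}\bigr)=N\bigl(\mathbf{H},(\alpha^{(M+1)N+1}\times I^{\infty})^{*}\bigr)=N\bigl(\star_{i=1}^{M+1}H,\alpha^{(M+1)N+1}\bigr).
$$
Iterating the same bookkeeping, for each positive integer $l$,
$$
N\bigl(\mathbf{H},(\vee_{i=0}^{l}\sigma^{-iN}(\mathcal{V}))^{*}\bigr)=N\bigl(\star_{i=1}^{M+l}H,\alpha^{(M+l)N+1}\bigr).
$$

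From here the argument is exactly the Walters--Bowen reduction already carried out in Theorem \ref{sigma versus G}. Taking logarithms and dividing by $l$ gives
$$
h\bigl(\sigma^{N},(\alpha^{MN+1}\times I^{\infty})^{*}\bigr)=\lim_{l\to\infty}\frac{\log N(\star_{i=1}^{M+l}H,\alpha^{(M+l)N+1})}{l},
$$
while $\ent(H,\alpha)=\lim_{m\to\infty}\frac{\log N(\star_{i=1}^{m}H,\alpha^{mN+1})}{m}$. Setting $a_k=\log N(\star_{i=1}^{k}H,\alpha^{kN+1})$, the subadditivity $a_{m+k}\le a_m+a_k$ already established in the construction of $\ent(H,\alpha)$ gives $a_l\le a_{M+l}\le a_M+a_l$, so $\lim_l a_{M+l}/l=\lim_l a_l/l$, and both limits coincide.

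The main obstacle I anticipate is the careful bookkeeping at the join step: one must verify that the refinement in both directions survives restriction to $\mathbf{H}$ (so that the two $N$-values agree), which requires using $\sigma^{N}(\mathbf{H})=\mathbf{H}$ to populate the ``new'' $N$ coordinates introduced by $\sigma^{-N}$. Once this is in place, the passage from one $M$ to all $M$ is handled uniformly by the subadditivity argument above.
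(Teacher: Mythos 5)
Your proposal is correct and follows essentially the same route as the paper's own proof: the same cover $\mathcal{V}=\alpha^{MN+1}\times I^{\infty}$, the same computation of $\sigma^{-N}(\mathcal{V})$ and mutual refinement of the join with the finer grid cover, the same identification $N(\mathbf{H},(\vee_{i=0}^{l}\sigma^{-iN}\mathcal{V})^{*})=N(\star_{i=1}^{M+l}H,\alpha^{(M+l)N+1})$, and the same concluding subadditivity argument $a_{l}\le a_{M+l}\le a_{M}+a_{l}$ via Lemma \ref{Walters}. Your index bookkeeping (writing $\star_{i=1}^{M+1}H$ inside $\Pi_{i=0}^{(M+1)N}I_i$) is in fact cleaner than the paper's, which slips between $\star_{i=1}^{(M+l)N}H$ and $\star_{i=1}^{M+l}H$ at the analogous step.
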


\begin{proof} This proof is similar to the proof of Theorem \ref{sigma versus G}, but a little more difficult technically.

  Let $\alpha=\{\alpha_{1}, \dots, \alpha_{n}\}$ be a minimal open cover of $I_{0}$ by intervals. 

\vphantom{}

Fix the positive integer $M$. Let 
$$\mathcal{V}=\left \{ \prod_{j=0}^{MN} \alpha_{i_{j}} \times I^{\infty}: \alpha_{i_{j}} \in \alpha \text{ and } \left (\prod_{j=0}^{MN} \alpha_{i_{j}} \times I^{\infty} \right) \cap \mathbf{H} \ne \emptyset \right \},$$ 
and let 
\begin{align*}
\mathcal{U}=& \left \{ \alpha_{k_{0}} \times \alpha_{k_{1}} \times \ldots \times \alpha_{k_{(M+1)N}} \times I^{\infty}:  \{k_{j}\}_{j=0}^{(M+1)N} \text{ is a sequence of} \right. \\
& \text{ members of }\{1, \ldots, n \} \text{ of length } (M+1)N+1  \Big \}.
\end{align*}

For $v=\prod_{j=0}^{MN} \alpha_{i_{j}} \times I^{\infty} \in \mathcal{V}$, $\sigma^{-N}(v)=\prod_{i=0}^{N-1}I_{i} \times \prod_{j=0}^{MN} \alpha_{i_{j}} \times I^{\infty}$. Then 
\begin{align*}
\sigma^{-N}(\mathcal{V}) \vee \mathcal{V} & =\left \{ \sigma^{-N}(v) \cap w: v  =\prod_{j=0}^{MN} \alpha_{i_{j}} \times I^{\infty}, w=\prod_{j=0}^{MN} \alpha_{k_{j}} \times I^{\infty} \in \mathcal{V} \right \} \\
& = \left \{ \left(\prod_{i=0}^{N-1}I_{i} \times \prod_{j=0}^{MN} \alpha_{i_{j}} \times I^{\infty}\right) \cap \left(\prod_{j=0}^{MN} \alpha_{k_{j}} \times I^{\infty}\right): \{i_{j}\}_{j=0}^{MN} \text{ and } \{k_{j} \}_{j=0}^{MN} \right. \\
& \left. \text{ are finite sequences of} \newline \text{ members of }  \{1,\ldots,n \}  \text{ of length } (M+1)N+1 \rule{0cm}{0.7cm} \right \}.
\end{align*}

\vphantom{}
 
If $v=\prod_{j=0}^{MN} \alpha_{i_{j}} \times I^{\infty} \in \mathcal{V}$ and $w=\prod_{j=0}^{MN} \alpha_{k_{j}} \times I^{\infty} \in \mathcal{V}$, then 

\begin{align*}
 &\sigma^{-N}(v) \cap w =\left(\prod_{i=0}^{N-1}I_{i} \times \prod_{j=0}^{MN} \alpha_{i_{j}} \times I^{\infty}\right) \cap \left(\prod_{j=0}^{MN} \alpha_{k_{j}} \times I^{\infty}\right) \\
 &= \prod_{i=0}^{N-1}\alpha_{k_{i}} \times (\alpha_{k_{N}} \cap \alpha_{i_{0}}) \times \ldots \times (\alpha_{k_{MN}} \cap \alpha_{i_{(M-1)N}}) \times \prod_{l=N(M-1)+1}^{MN}\alpha_{i_{l}} \times I^{\infty} \\ 
 &\subset  \alpha_{k_{0}} \times \alpha_{k_{1}} \times \ldots \times \alpha_{k_{MN}} \times \alpha_{i_{(M-1)N+1}} \times \ldots \times \alpha_{i_{MN}} \times I^{\infty}.
 \end{align*}

Hence, the collection $\sigma^{-1}(\mathcal{V}) \vee \mathcal{V}$ refines the collection $ \mathcal{U}.$ Then $(\sigma^{-N}(\mathcal{V}) \vee \mathcal{V})^{*}$ refines the collection $ \mathcal{U}^{*}$, so $\mathcal{U}^{*} < (\sigma^{-N}(\mathcal{V}) \vee \mathcal{V})^{*}$ , and $N(\mathbf{H},\mathcal{U}^{*}) \le N(\mathbf{H}, (\sigma^{-N}(\mathcal{V}) \vee \mathcal{V})^{*})$. 

\vphantom{}

But $ \mathcal{U}$ also refines  $\sigma^{-N}(\mathcal{V}) \vee \mathcal{V}$, and so $ \mathcal{U}^{*}$  refines  $(\sigma^{-N}(\mathcal{V}) \vee \mathcal{V})^{*}$. Thus, $N(\mathbf{H},\mathcal{U}^{*}) \ge N(\mathbf{H}, (\sigma^{-N}(\mathcal{V}) \vee \mathcal{V})^{*})$. Then $N(\mathbf{H},\mathcal{U}^{*}) = N( \mathbf{H},(\sigma^{-N}(\mathcal{V}) \vee \mathcal{V})^{*})$.

\vphantom{}

Note that $N(\mathbf{H},\mathcal{U}^{*}) =N(\star_{i=1}^{(M+1)N}H,\alpha^{MN+1})$.

\vphantom{}

  We can continue: By similar arguments, for each positive integer $l$, \newline $N(\mathbf{H},(\vee_{i=0}^{l} \sigma^{-iN}(\mathcal{V})^{*}))=N(\mathbf{H},\alpha^{(M+l)N+1} \times I^{\infty})=N(\star_{i=1}^{(M+l)N}H,\alpha^{(M+l)N+1})$. Now $\mathcal{V} = \alpha^{MN+1} \times I^{\infty}$, and for $l$ a positive integer, $N(\mathbf{H},(\vee_{i=0}^{l} \sigma^{-iN}(\mathcal{V})^{*}))=N(\star_{i=1}^{M+l}H,\alpha^{(M+l)N+1})$. Then  $\log(N(\mathbf{H},(\vee_{i=0}^{l} \sigma^{-iN}(\mathcal{V})^{*}))= \newline \log(N(\star_{i=1}^{(M+l)N}H,\alpha^{(M+l)N+1})).$ It follows that 
\begin{align*}
h(\sigma^{N}, \alpha^{MN+1} \times I^{\infty}) &= \lim_{l \to \infty} \frac{\log(N(\mathbf{H},(\vee_{i=0}^{l} \sigma^{-iN}(\mathcal{V})^{*}))}{l} \\
& = \lim_{l \to \infty} \frac{\log(N(\star_{i=1}^{(M+l)N}H,\alpha^{(M+l)N+1}))}{l},
\end{align*}
while
$$\ent(H,\alpha) = \lim_{l \to \infty}\frac{\log(N(\star_{i=1}^{l}H,\alpha^{lN+1}))}{l}.$$

\vphantom{}

For each positive integer $k$, let $\log(N(\star_{i=1}^{k}H,\alpha^{kN+1})) = a_{k}$. Then \newline $\log(N(\star_{i=1}^{l}H,\alpha^{lN+1})) = a_{l}$ and $\log(N(\star_{i=1}^{M+l}H,\alpha^{(M+l)N+1})) = a_{M+l}$. Furthermore, $a_{l} \le a_{M+l} \le a_{M}+a_{l}$. Then $\frac{a_{l}}{l} \le \frac{a_{l+M}}{l} \le \frac{a_{l}}{l}+ \frac{a_{M}}{l}$. By Lemma \ref{ent lim}, $\lim_{l \to \infty} \frac{a_{l}}{l}$ exists, and 
$$\lim_{l \to \infty} \frac{a_{l}}{l} \le  \lim_{l \to \infty}\frac{a_{l+M}}{l} \le \lim_{l \to \infty} (\frac{a_{l}}{l}+\frac{a_{M}}{l})= \lim_{l \to \infty} \frac{a_{l}}{l}+ 
\lim_{l \to {\infty}}\frac{a_{M}}{l} = \lim_{l \to {\infty}}\frac{a_{l}}{l}.
$$

It follows that  $$\lim_{l \to \infty}\frac{\log(N(\star_{i=1}^{l}H,\alpha^{lN+1}))}{l}= \lim_{l \to \infty}\frac{\log(N(\star_{i=1}^{M+l}H,\alpha^{(M+l)N+1}))}{l},$$ and thus, $\ent(G,\alpha)=h(\sigma^{N}, (\alpha^{MN+1}\times I^{\infty})^{*})$.

\end{proof}

\begin{theorem}
Suppose $H$ is a closed subset of $\Pi_{i=0}^{N}I_{i}$, $\mathbf{H} =\star_{i=1}^{\infty}H$, and $\sigma^{N}(\mathbf{H})=\mathbf{H}$.  If $\alpha=\{\alpha_{1}, \dots, \alpha_{n}\}$ is a minimal open cover of $I_{0}$ by open  intervals, then $\ent(H)=h(\sigma^{N})$.

\end{theorem}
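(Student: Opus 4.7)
The plan is to deduce this theorem from the preceding theorem (which established $\ent(H,\alpha)=h(\sigma^{N},(\alpha^{MN+1}\times I^{\infty})^{*})$ for every positive integer $M$ and every minimal open cover $\alpha$ of $I_{0}$ by intervals) combined with Proposition \ref{grid cover} to handle arbitrary open covers of $\mathbf{H}$. Since both $\ent(H)$ and $h(\sigma^{N})$ are defined as suprema, it suffices to prove two inequalities.

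For the easy inequality $\ent(H)\le h(\sigma^{N})$, I would fix any minimal open cover $\alpha$ of $I_{0}$ by open intervals and pick any positive integer $M$. By the previous theorem, $\ent(H,\alpha)=h(\sigma^{N},(\alpha^{MN+1}\times I^{\infty})^{*})$, and since $(\alpha^{MN+1}\times I^{\infty})^{*}$ is a particular open cover of $\mathbf{H}$, this is bounded above by $h(\sigma^{N})$. Taking the supremum over $\alpha$ gives $\ent(H)\le h(\sigma^{N})$.

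For the reverse inequality $h(\sigma^{N})\le \ent(H)$, let $\mathcal{V}$ be any open cover of $\mathbf{H}$. By Proposition \ref{grid cover} (applied in $I^{\infty}$), there exists a minimal open cover $\tau$ of $[0,1]$ by open intervals and a positive integer $K$ such that the restricted grid cover $(\tau^{K+1}\times I^{\infty})^{*}$ refines $\mathcal{V}$. Choose $M$ large enough that $MN+1\ge K+1$; then $(\tau^{MN+1}\times I^{\infty})^{*}$ further refines $(\tau^{K+1}\times I^{\infty})^{*}$ and hence refines $\mathcal{V}$. Using monotonicity of $h(\sigma^{N},\cdot)$ under refinement together with the previous theorem,
\[
h(\sigma^{N},\mathcal{V})\le h(\sigma^{N},(\tau^{MN+1}\times I^{\infty})^{*})=\ent(H,\tau)\le \ent(H).
\]
Taking the supremum over $\mathcal{V}$ yields $h(\sigma^{N})\le \ent(H)$, which completes the argument.

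The only nonroutine point is the padding step in the second paragraph: Proposition \ref{grid cover} produces a grid cover whose ``length'' $K+1$ need not be of the form $MN+1$, so one must observe that lengthening the grid (adding further $\tau$-factors) only refines it, which lets us match the index required by the previous theorem. Everything else is a direct reduction to the two already-proved ingredients.
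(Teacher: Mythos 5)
Your proposal is correct and follows essentially the same route as the paper: the paper's (one-line) proof likewise reduces the statement to the preceding theorem together with the fact that every open cover of $\mathbf{H}$ is refined by a grid cover of the form $\alpha^{MN+1}\times I^{\infty}$, using monotonicity of $h(\sigma^{N},\cdot)$ under refinement. Your write-up simply makes explicit the two inequalities and the padding step that the paper leaves implicit.
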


\begin{proof}
 Since each open cover of $\mathbf{H}$ is refined by the grid cover $\alpha^{MN+1} \times I^{\infty}$ for some $M$ and minimal open cover $\alpha$ by intervals of $I_{0}$, the result follows.

\end{proof}

\begin{theorem}
Suppose $G$ is a closed subset of $I_{0} \times I_{1}$, $\mathbf{G} =\star_{i=1}^{\infty}G \ne \emptyset$, and $\sigma(\mathbf{G})=\mathbf{G}$. Then $\ent(\star_{i=1}^{k}G)=k \thinspace \ent(G)$.

\end{theorem}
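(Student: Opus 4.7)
The plan is to use the associativity of the Mahavier product to convert a Mahavier product of $k$-fold Mahavier products back into a single long Mahavier product of copies of $G$, and then recognize the resulting expression as a subsequential limit of the expression defining $\ent(G,\alpha)$.

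First I would set $H=\star_{i=1}^{k}G$, which is a closed subset of $\Pi_{i=0}^{k}I_{i}$. The hypothesis $\mathbf{G}\ne\emptyset$ together with $\sigma(\mathbf{G})=\mathbf{G}$ immediately yields $\mathbf{H}=\star_{i=1}^{\infty}H\ne\emptyset$ and $\sigma^{k}(\mathbf{H})=\mathbf{H}$, so the quantity $\ent(H,\alpha)$ from the preceding section is defined. The central observation is that by associativity of the Mahavier product,
\[
\star_{i=1}^{m}H \;=\; \star_{i=1}^{m}\!\left(\star_{j=1}^{k}G\right) \;=\; \star_{i=1}^{mk}G,
\]
so these two sets are literally the same closed subset of $\Pi_{i=0}^{mk}I_{i}$.

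Next, fix a minimal open cover $\alpha$ of $I_{0}$ by open intervals. Since $H\subset\Pi_{i=0}^{k}I_{i}$, the definition from the previous section (with $N=k$) gives
\[
\ent(H,\alpha)=\lim_{m\to\infty}\frac{\log N(\star_{i=1}^{m}H,\alpha^{mk+1})}{m}=\lim_{m\to\infty}\frac{\log N(\star_{i=1}^{mk}G,\alpha^{mk+1})}{m}.
\]
Writing $b_{n}=\log N(\star_{i=1}^{n}G,\alpha^{n+1})$, the limit defining $\ent(G,\alpha)$ is $\lim_{n\to\infty}b_{n}/n$, and we have already established (in the section on $\ent$ for closed subsets of $I^{2}$) that this limit exists. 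Therefore the subsequential limit along the arithmetic progression $n=mk$ coincides with the full limit, and so
\[
\ent(H,\alpha)=\lim_{m\to\infty}\frac{b_{mk}}{m}=k\cdot\lim_{m\to\infty}\frac{b_{mk}}{mk}=k\cdot\ent(G,\alpha).
\]

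Finally, taking the supremum over all minimal open covers $\alpha$ of $I_{0}$ by open intervals, and using that $k$ is a positive constant that pulls out of the supremum, one obtains
\[
\ent(\star_{i=1}^{k}G)=\sup_{\alpha}\ent(H,\alpha)=k\,\sup_{\alpha}\ent(G,\alpha)=k\,\ent(G).
\]
There is no real obstacle here: the whole argument rests on associativity of $\star$ and the fact that the defining limit exists (so subsequential limits agree with it). The only point demanding care is bookkeeping: making sure the cover $\alpha^{mk+1}$ used in $\ent(H,\alpha)$ with $N=k$ is the same cover used in $\ent(G,\alpha)$ evaluated at the index $mk$, which is immediate from the definitions.
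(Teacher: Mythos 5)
Your proposal is correct and follows essentially the same route as the paper: both rest on the identity $\star_{i=1}^{m}(\star_{j=1}^{k}G)=\star_{i=1}^{mk}G$, the observation that $N(\star_{i=1}^{m}H,\alpha^{mk+1})=N(\star_{i=1}^{mk}G,\alpha^{mk+1})$, and the fact that the limit defining $\ent(G,\alpha)$ exists so that the subsequence along $n=mk$ gives the same value, yielding $\ent(H,\alpha)=k\,\ent(G,\alpha)$ before taking the supremum over $\alpha$. If anything, your write-up is slightly cleaner about why the subsequential limit agrees with the full limit.
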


\begin{proof} Supppose $k$ is a positive integer. Let $G$ be a closed subset of $I_{0} \times I_{1}$ such that $\mathbf{G} =\star_{i=1}^{\infty}G \ne \emptyset$, and $\sigma(\mathbf{G})=\mathbf{G}$, and let $H=\star_{i=1}^{k}G\subset \Pi_{i=0}^{k}I_{i}$.  Let $\alpha=\{\alpha_{1}, \dots, \alpha_{n}\}$ be a minimal open cover of $I_{0}$ by open  intervals. Then for each positive integer $m$, $\star_{i=1}^{m}H=\star_{i=1}^{m}(\star_{i=1}^{k}G)=\star_{i=1}^{mk}G$. Hence, $N(\star_{i=1}^{mk}G, \alpha^{mk+1})=N(\star_{i=1}^{m}H,\alpha^{mk+1})$.

\vphantom{}

Then 
\begin{align*}
\ent(G,\alpha)& =\lim_{m \to \infty}\frac{1}{m}\log N(\star_{i+1}^{m}G,\alpha^{m+1}) = 
\lim_{m \to \infty}\frac{1}{mk}\log N(\star_{i+1}^{mk}G,\alpha^{mk+1})\\
&  =\frac{1}{k}\lim_{m \to \infty}\frac{1}{m}\log N(\star_{i+1}^{mk}G,\alpha^{mk+1}) =\frac{1}{k}\ent(\star_{i=1}^{mk} G,\alpha^{mk+1}).
\end{align*}

Thus, for every minimal cover $\alpha$ by open intervals of $I_{0}$, $$k \thinspace \ent(G,\alpha)= \ent(\star_{i=1}^{mk}G,\alpha^{mk+1}).$$ The result follows.

\end{proof}

\section{Computation and Application of Topological Entropy}

In this section we compute the topological entropy for some closed subsets $G$ of $I^{2}$. While most authors prefer the Bowen approach (using $(n,\epsilon)$-spanning sets and $(n,\epsilon)$-separating sets), we find that "counting the boxes" (i.e., computing  $N(\star_{i=1}^m G,\alpha^{m+1})$) for $G$ a closed subset of $I^2$ and $\alpha$ a minimial open cover of $I$ by intervals) is often easy and natural. We also explore the relationship between $G$ and $\mathbf G=\star_{i=1}^\infty G,$ and investigate the interaction of the topology of $\mathbf G$ and the dynamics of the shift map $\sigma.$
\begin{exam} \label{exam square}
Suppose $G=I^2$. Then $\ent(G) = \infty$.

\end{exam}
\begin{proof} Suppose  $\alpha=\{\alpha_{1}, \dots, \alpha_{n}\}$ is a minimal open cover of $I_{0}$ by open  intervals. Then for each positive integer $m$, $N(\star_{i=1}^{m}G,\alpha^{m+1})=n^{m+1}$. Thus, 

$$\ent(G,\alpha^{2})=\lim_{m \to \infty}\frac{1}{m}\log N(\star_{i+1}^{m}G,\alpha^{m+1}) = 
\lim_{m \to \infty}\frac{1}{m}\log n^{m+1}$$ $$ =\lim_{m \to \infty}\frac{m+1}{m}\log n =\log n.$$

Then $$\sup_{\alpha}\ent(G,\alpha^2)=\sup_{\alpha} \log n = \infty.$$

\end{proof}

\noindent \textbf{Remark} While we computed the entropy of Example \ref{exam square} using our boxes, that $\ent(G)=\infty$ follows from \cite[Theorem 7.1]{KT}.

\begin{exam} \label{linetwopoints}
	Let $G$ denote the union of the diagonal from $(0,0)$ to $(1,1)$ and two points where $\left( x,y \right)$ is an arbitrary point in $I \times I$ such that $x \neq y$ and the second point is $\left( y,x \right)$. 
	Then, $\ent \left( G \right)=\log 2.$
\end{exam}
\begin{figure}
\centering
\begin{tikzpicture}[scale=0.7]
\begin{axis}[ 
unit vector ratio*=1 1 1, 
xlabel=$I_0$, xtick={0,0.5,1}, xticklabels={0,,1},
ylabel=\rotatebox{-90}{$I_1$}, 
ytick={0,0.5,1}, yticklabels={0,,1},
xmin=0,     
xmax=1.,   
ymin=0,     
ymax=1,
] 
\addplot [blue, ultra thick] coordinates {(0,0) (1,1)};
\addplot [blue, only marks, mark options={scale=1.2}] plot coordinates {(0.25,0.6) (0.6,0.25) (0.25,0.25) (0.6,0.6)};
\node at (axis cs:0.2,0.65)  {$(x,y)$};
\node at (axis cs:0.55,0.3)  {$(y,x)$};
\node at (axis cs:0.2,0.3)  {$(x,x)$};
\node at (axis cs:0.55,0.65)  {$(y,y)$};
\end{axis}
\end{tikzpicture}
\caption{Set $G$ from Example \ref{linetwopoints}}
\end{figure}
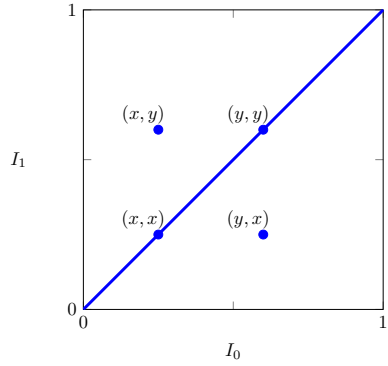

\begin{proof}
	The set $G$ is the union of the diagonal and two points, $G \star G$ is the union of the diagonal from $(0,0,0)$ to $(1,1,1)$ and six points, $\star_{i=1}^m G$ is the union of the diagonal and $2^{m+1}-2$ different points that don't lie on the diagonal.
	Namely, points in  $\star_{i=1}^m G$ that are not on the diagonal have $m+1$ coordinates and every coordinate is either $x$ or $y$, so there are $2^{m+1}-2$ different points which do not lie on diagonal.
	So, for intervals in $\alpha$ sufficiently small we have $$ N\left( G ,\alpha^2\right) = n+2,N \left( {G \star G},\alpha^3\right) = n+6, \ldots, N \left({{\star_{i=1}^m} G}, \alpha^{m+1}\right) = n+2^{m+1}-2.$$
	Hence, 
	$$
	\ent \left( G, \alpha \right)= \lim_{m \to \infty} \frac{1}{m} \log \left( n+2^{m+1}-2 \right)=\log 2.
	$$
	Therefore, $\ent \left( G \right) = \log 2.$
\end{proof}

In the example above, note that the topological entropy being $\log 2$ is completely determined by the four point subset $G'=\{ (x,y), (y,x), (x,x), (y,y) \}$, i.e., $\ent (G')=\log 2.$ In \cite{KT}, it was shown (Proposition 6.4) that it can never happen for a continuous function $f:X\to X$ on a compact metric space that the entropy $h(f)$ is determined by a finite set. They also showed (Proposition 6.1) that if $a\neq b $ in $ X $ compact metric, $F:X\to 2^X$ is upper semicontinuous, and $F(a)\supseteq \{a,b \}, F(b)\supseteq \{a,b \},$ then $h(F)\geq \log 2.$

\vphantom{}

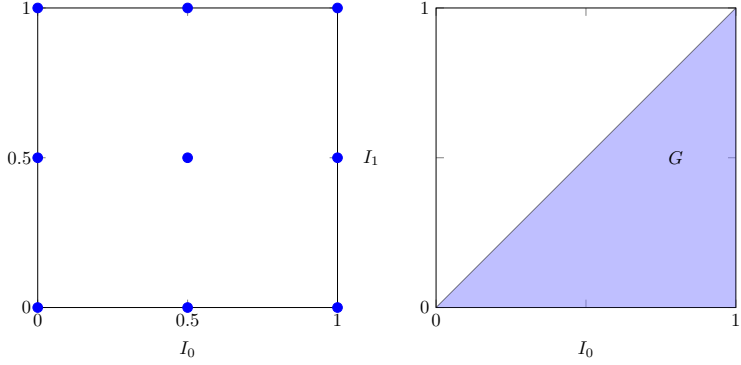
\begin{figure}[h!]
	\centering
	\begin{tikzpicture}[scale=0.7]
	\begin{axis}[ 
	unit vector ratio*=1 1 1,
	xlabel=$I_0$, xtick={0,0.5,1}, xticklabels={0,0.5,1},
	%ylabel=\rotatebox{-90}{$I_1$},
	 ytick={0,0.5,1}, yticklabels={0,0.5,1},
	xmin=0,     
	xmax=1.,   
	ymin=0,     
	ymax=1,
	] 
	\addplot [blue, only marks, mark options={scale=1.3}] plot coordinates {(0,0) (0.5,0) (1,0) (0,0.5) (0.5,0.5) (1,0.5) (0,1) (0.5,1) (1,1)};
	\end{axis}
	\label{aa}
	\end{tikzpicture}
	\begin{tikzpicture}[scale=0.7]
	\begin{axis}[
	unit vector ratio*=1 1 1,
	xlabel=$I_0$, xtick={0,0.5,1}, xticklabels={0,,1},
	ylabel=\rotatebox{-90}{$I_1$}, ytick={0,0.5,1}, yticklabels={0,,1},
	xmin=0,     
	xmax=1,   
	ymin=0,     
	ymax=1,
	] 
	
	\addplot [smooth, opacity=0.5, domain=0:1,samples=20, fill=blue!50!white]
	{x}  \closedcycle ;

	\node at (axis cs:0.8,0.5)  {$G$};
	
	\end{axis}
	\end{tikzpicture}
	\caption{ The set on the left is $G_3$ from Example \ref{exam logn} and the set on the right is from Example \ref{exam triangle}}
\end{figure}

In the next example we find, for each positive integer $n>1,$ finite sets $G_n$ such that $\ent G_n = \log n.$ Of course, it is well known that for any positive number $r,$ there are continuous functions $f$ on the interval such that $h(f)=r.$
\begin{exam} \label{exam logn}
	For each $n \in \mathbb N$ there exists set $G_n \subseteq I \times I$ such that $\ent \left( G_n \right) = \log n.$
\end{exam}
\begin{proof}
	Let $n \in \mathbb N$ be arbitrary.
	We define $G_n$ in following way:
	$$
	G_n= \left \{ \left( \frac{k}{n-1}, \frac{l}{n-1} \right) : k,l \in \left \{ 0,1, \ldots, n-1 \right \} \right \}. 
	$$
	Set $G_n$ is union of $n^2$ points, $G_n \star G_n$ is union of $n ^3$ points, $\star_{i=1} ^m G_n $ is union of $n ^{m+1}$ different points so for intervals in $\alpha$ sufficiently small we have:
	$$ N \left(G, \alpha^2\right) = n ^2  ,N \left({G \star G}, \alpha^3\right) = n ^3 , \ldots, N \left( {{\star_{i=1}^m} G},\alpha^{m+1}\right) = n^{m+1}.$$
	We have
	$$ 
	\ent \left( G, \alpha \right) = \lim_{m \to \infty} \frac{1}{m} \log \left( n^{m+1} \right)=\log n.
	$$
	Therefore, $\ent \left( G \right)= \log n$
\end{proof}

The following example is discussed in \cite{KT}. They used Bowen's result (Theorem \ref{Bowen1}) to show that the entropy is $0.$ Since the proof of Bowen's theorem is nontrivial, quite delicate, and uses notation that is not defined and probably out of date,  we calculate the entropy directly. We explore this example in more depth later, as it turns out to be quite interesting, and ``just barely'' has entropy $0$.

\begin{exam} (The Triangle Example) \label{exam triangle}
	Let $G$ to be the set $\{ (x,y)\in I\times I: x \geq y   \}$. Then $\ent G=0.$
\end{exam}
\begin{proof}
	Let $\alpha = \{ \alpha_1,\ldots, \alpha_n  \}$ be an open cover of $I$ where $n\in \mathbb N$ is arbitrary and let $m\in \mathbb N , m \geq 2$ be arbitrary. We have $\star_{i=1}^m G=\{  (x_0,\ldots,x_m): x_i \geq x_{i+1}, 0\le i\le m-1 \}.$
	The elements of cover of $\star_{i=1}^m G$ are of the form $\alpha_{i_1} \times \alpha_{i_2} \times \ldots \times \alpha_{i_{m+1}},$ where $i_j \in \{ 1,2,\ldots,n \}.$ From the definition of the set $\star_{i=1}^m G$ it follows that $i_j \geq i_{j'},$ when $j' \geq j$.  Therefore, calculating the number $N(\star_{i=1}^{m}G,\alpha^{m+1})$ is equivalent to calculating the number of $(m+1)$-tuples $(i_1,i_2,\ldots,i_{m+1})$ such that $i_j \geq i_{j'},$ whenever $j' \geq j,$ where $i_j \in \{ 1,2,\ldots,n \}.$ 
	That number is equal to the binomial coefficient $\binom{m+1+n-1}{m+1}= \binom{m+n}{m+1}=\binom{m+n}{n-1}.$
	Therefore, $N(\star_{i=1}^{m}G,\alpha^{m+1})=\binom{m+n}{n-1}$.
	Now we have:
	$$
	\ent(G,\alpha)=\lim\limits_{m} \frac{\log \left( \binom{m+n}{n-1} \right)}{m} \leq \lim\limits_{m} \frac{\log \left( \frac{(m+n)^{n-1}}{ (n-1)!} \right)}{m}=\lim\limits_{m} \frac{(n-1) \log(m+n)}{m}=0.
	$$
	Therefore, $\ent(G,\alpha)=0$ and hence $\ent(G)=0.$
\end{proof}

The two following examples are immediate consequences of the previous example. The next three examples were considered by Ingram in \cite{I6}.
\\
\begin{exam} \cite[Example 2.14]{I7}
	Let $G= (\left \{ ( x,x)  \mid x \in I \right \}) \cup (\left \{ 1 \right \} \times I).$ Then $\ent \left( G \right) = 0.$
\end{exam}

\begin{exam} \cite[Example 2.2]{I7}
	Let $G= (\left \{ 0 \right \} \times [ 0,1 ]) \cup  ([0,1]  \times \left \{ 1 \right \}) . $ Then, $\ent \left( G \right)=0.$ 
\end{exam}

\begin{exam} \cite[Example 2.3]{I7} \label{exam ingram 2.3}
	Let $G = (\left \{ 0 \right \} \times [0,1]) \cup ([0,1] \times \left \{ 0 \right \}).$ Then $\ent \left( G \right) = \infty.$
\end{exam}

\begin{proof}
	Let  $L_1= \left \{ 0 \right \} \times [0,1]$ and  $L_2=[0,1] \times \left \{ 0 \right \}.$
	For arbitrary $m \in \mathbb N, \star_{i=1} ^m G$ contains $L_1 \star L_2 \star \ldots L_1 \star L_2 = [0,1] \times \left \{0 \right \} \times \ldots \times \left \{0 \right \} \times [0,1]$ if $m$ is even and $ L_1 \star L_2 \star \ldots L_2 \star L_1 = [0,1] \times \left \{0 \right \} \times \ldots \times [0,1] \times \left \{0 \right \} $ if $m$ is odd. Either way, $N \left( {{\star_{i=1}^m} G},\alpha^{m+1}\right) > n ^\frac{m}{2}.$
	Therefore, 
	$$
	\ent \left( G, \alpha \right)  \geq \lim_{m \to \infty} \frac{1}{m} \log \left(   n ^\frac{m}{2}  \right)=\frac{1}{2}\log n.
	$$
	and it follows $\ent \left( G \right) =\infty.$
\end{proof}

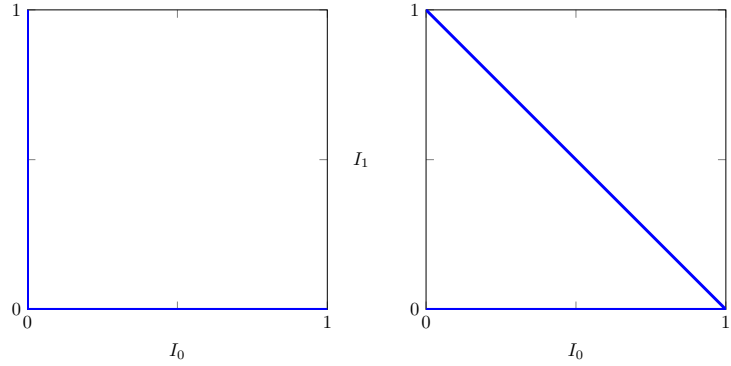
\begin{figure}
	\centering
\begin{tikzpicture}[scale=0.7]
\begin{axis}[ 
unit vector ratio*=1 1 1,
xlabel=$I_0$, xtick={0,0.5,1}, xticklabels={0,,1},
%ylabel=\rotatebox{-90}{$I_1$},
 ytick={0,0.5,1}, yticklabels={0,,1},
xmin=0,     
xmax=1.,   
ymin=0,     
ymax=1,
] 
\addplot [blue, ultra thick] coordinates {(0,1) (0,0) (1,0)};
\end{axis}
\end{tikzpicture}
\begin{tikzpicture}[scale=0.7]
\begin{axis}[ 
unit vector ratio*=1 1 1,
xlabel=$I_0$, xtick={0,0.5,1}, xticklabels={0,,1}, ylabel=\rotatebox{-90}{$I_1$}, 
ytick={0,0.5,1}, yticklabels={0,,1},
xmin=0,     
xmax=1.,   
ymin=0,     
ymax=1,
] 
\addplot [blue, ultra thick] coordinates {(0,0) (1,0) (0,1)};
\end{axis}
\end{tikzpicture}
\caption{The sets $G$ from Examples \ref{exam ingram 2.3} and \ref{exam maribor monster}}
\end{figure}

The Maribor Monster Example below has been studied by several researchers, as it has quite interesting topology. (See \cite{BCMM1}, \cite{I6}, \cite{KN}, for example.) The continuum that forms the inverse limit is a $\lambda$-dendroid that is both $\frac{1}{2}$-indecomposable and hereditarily decomposable.  The dynamical behavior of $\sigma$ on the inverse limit is a bit easier to understand. There is one invariant arc $K=\{(x,1-x,x,1-x,...): x \in I \}$ that repels all other points, and there is an attracting invariant Cantor set $C$ (a subshift of the full 2-shift) that actually eventually ``absorbs'' all points not on $K$. (See \cite{KN} for definitions and details.) It is the invariant Cantor set, which is generated by the finite set $C'=\{(0,0),(1,0),(0,1)\}$ in $I^2$, that determines the entropy here. 

\begin{exam} \label{exam maribor monster} (Maribor Monster Example) 
	Let $G=( I \times \left \{ 0 \right \}) \cup (\left \{ (x,1-x) \mid x \in I \right \}).$ Then $\ent (G) =\frac{1+\sqrt{5}}{2}$, i.e., the so-called "golden ratio".
\end{exam} 

\begin{proof}
	Let us denote with $L_1$ line from $(1,0)$ to $(0,0)$ and with $L_2$ line from $(1,0)$ to $(0,1).$ We have $G= L_1 \cup L_2.$
	In arbitrary product $\star_{j=1} ^m L_{i_j}, i_j \in\left \{ 1,2 \right \}, $ let $i_0$ be first index such that $L
_{i_0}=L_1.$ Next coordinate has to be $0$ and after that we have only zeros and ones such that we cannot have two neighboring ones.
	If $L_i=L_2, \forall i$ then the product is arc from $(0,1,0,\ldots)$ to $(1,0,1,\ldots).$
	So we have 
	\begin{align*}
	N \left( {{\star_{i=1}^{m-1}} G},\alpha^{m}\right) + N \left( {\star_{i=1}^{m-2} G},\alpha^{m-1}\right) \leq & N\left( {{\star_{i=1}^m} G} ,\alpha^{m+1}\right) \\
	\leq & N \left({{\star_{i=1}^{m-1}} G}, \alpha^{m}\right) + N \left( {{\star_{i=1}^{m-2}} G},\alpha^{m-1}\right) + n
	\end{align*}
	and hence $ nF_{m+2} \leq  N \left( {{\star_{i=1}^m} G},\alpha^{m+1}\right) \leq n(F_{m+3}-1),$ where $F_m$ is $m-th$ Fibonacci number. 
	Therefore, we have that
	$$
	\ent \left( G, \alpha \right) = \lim_{m \to \infty} \frac{1}{m} \log N\left( {{\star_{i=1}^m} G} ,\alpha^{m+1}\right)= \lim_{m \to \infty} \frac{1}{m} \log n(F_{m+3} - 1)=\frac{1+\sqrt{5}}{2}.
	$$
	and $\ent (G) = \log \frac{1+\sqrt{5}}{2}. $
\end{proof}
\begin{figure}[h!]
	\centering
	\begin{tikzpicture}[scale=0.7]
	\begin{axis}[ 
	unit vector ratio*=1 1 1,
	xlabel=$I_0$, xtick={0,0.25,0.5,0.75,1}, xticklabels={0,0.25,$\cdots$,0.75,1},
	ylabel=\rotatebox{-90}{$I_1$}, ytick={0,0.25,0.5,0.75,1}, yticklabels={0,0.25,$\cdots$,0.75,1},
	xmin=0,     
	xmax=1.,   
	ymin=0,     
	ymax=1,
	] 
	\addplot [blue, only marks, mark options={scale=1.3}] plot coordinates {(0,0) (0.25,0) (0.75,0) (1,0) (0,0.25) (0,0.75) (0,1)};
	\node at (axis cs:0.5,0.12)  {$a+1$ points};
	\node at (axis cs:0.5,0.05) {$\overbrace{\qquad \qquad \qquad \qquad \qquad \qquad \qquad \qquad}$};
	\end{axis}
	\end{tikzpicture}
	\caption{Set $G$ from Example  \ref{exam points} }
\end{figure}
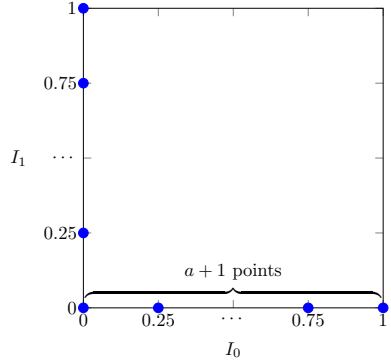
\begin{exam} \label{exam points}
	Let $a \in \mathbb N, a >1$ be arbitrary and let $G_a=\left \{ (\frac{k}{a-1},0) \mid k \in \left\{ 0,\ldots, a-1\right\}  \right \} \cup \left \{ (0, \frac{k}{a-1}) \mid k \in \left\{ 1,\ldots, a-1\right\}  \right \} \subseteq I^2 .$ Then $\ent(G_a)=\log \frac{1+\sqrt{1+4a}}{2}.$ 
\end{exam}
\begin{proof}
	%opisat ću broj točaka u umnošku G^m -kombinatorika
	Let us denote number of points in $\star_{i=1}^m G_a$ with $N_m.$ 
	\\We prove
	$N_m=N_{m-1}+a \ N_{m-2}.$ 
	\\Proof is combinatorial: let us observe arbitrary $(m+1)-\textrm{tuple}$ in $\star_{i=1}^m G_a$. If we have $0$ on the first coordinate, second can be any number from $\left \{ 0, \frac{1}{a}, \frac{2}{a},\ldots, 1 \right  \}$ so we can get any $m-\textrm{tuple}$. If we have non-zero as first coordinate, second coordinate has to be zero, third can be anything (as above) so we can get any $(m-1)-\textrm{tuple}$. 
	Therefore, we get reccurence relation $N_m=N_{m-1}+a \ N_{m-2}$ with initial values $N_1=2a+1$ and $N_2=(a+1)^2+a.$ Solving it using characteristical polynomial $x^2-x-a=0$ we get
	$$
	N_m= a_0 \left(\frac{1+\sqrt{1+4a}}{2} \right)^m + b_0 \left(\frac{1-\sqrt{1+4a}}{2} \right)^m
	$$
	where $a_0$ and $b_0$ are positive real numbers obtained from initial values.\\
	Therefore, for intervals in $\alpha$ sufficiently small we have
	$$
	N\left( {{\star_{i=1}^m} G_a} ,\alpha^{m+1}\right) = N_m
	$$
	and 
	$$
	\lim_{m \to \infty}  N \left( {{\star_{i=1}^m} G_a},\alpha^{m+1}\right) = \lim_{m \to \infty}  N_m .
	$$
	Now, 
	$$
	\ent (G_a, \alpha)=\lim_{m \to \infty} \frac{\log N_m}{m}.
	$$
	By simple calculations we get 
	$$
	\ent (G_a, \alpha)=\frac{1+\sqrt{1+4a}}{2}.
	$$
\end{proof}

\begin{exam} \label{exam11}
	Let $G=\left\{ (x_0,x_1) \in I_0 \times I_1: x_1 \leq x_0^2 \right\}$ and let $bL=([0,1] \times \left\{0 \right\}) \cup (\left \{ 1\right\} \times [0,1]).$ Then $\ent(G)=\ent(bL)=0.$	
\end{exam}

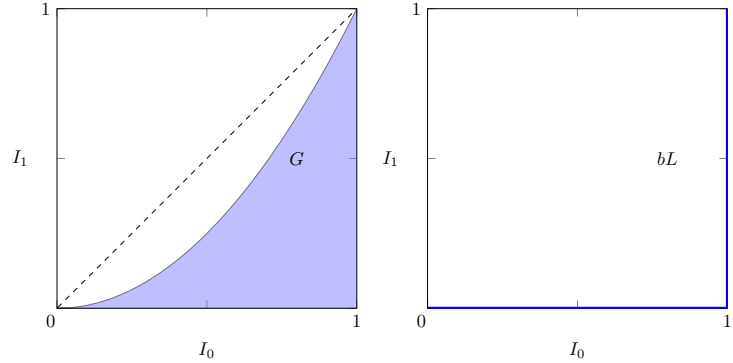
\begin{figure}[h]
	\begin{tikzpicture} [scale=0.7]
	\begin{axis}[
	unit vector ratio*=1 1 1, 
	xlabel=$I_0$, xtick={0,0.5,1}, xticklabels={$0 \ \ $,,1},
	ylabel near ticks,  ylabel=\rotatebox{-90}{$I_1$}, ytick={0,0.5,1}, yticklabels={
		,,1},
	xmin=0,     
	xmax=1,   
	ymin=0,     
	ymax=1,
	] 
	
	\addplot [smooth, opacity=0.5, domain=0:1,samples=20, fill=blue!50!white]
	{x^2}  \closedcycle ; 
	
	\addplot [dashed, black]{x};
	\node at (axis cs:0.8,0.5)  {$G$};
	
	\end{axis}
	\end{tikzpicture}
	\begin{tikzpicture}[scale=0.7]
	\begin{axis}[ 
	unit vector ratio*=1 1 1, 
	xlabel=$I_0$, xtick={0,0.5,1}, xticklabels={$0 \ \ $,,1},
	ylabel near ticks, ylabel=\rotatebox{-90}{$I_1$}, ytick={0,0.5,1}, yticklabels={,,1},
	xmin=0,     
	xmax=1.,   
	ymin=0,     
	ymax=1,
	] 
	\addplot [blue, ultra thick] coordinates {(0,0) (1,0)  (1,1)};
	\node at (axis cs:0.8,0.5)  {$bL$};
	\end{axis}
	\end{tikzpicture}
	\caption{Sets $G$ and $bL$ from the Example \ref{exam11}}
\end{figure}

\begin{proof}
	The sets $G$ and $bL$ are subsets of the set $G$ in Triangle Example so it follows from that and Proposition \ref{ent subset} (c).
\end{proof}

Before stating the proposition we give a new notion.
If $H$ is closed subset of $[0,1]^{n+1},$ define $\pi_{\{0,n\}}$ to be the map from $H$ to $[0,1]\times [0,1]$ defined by $\pi_{\{0,n\}}(x)=(x_0,x_n)$ for $x=(x_0,x_1,\ldots,x_n)\in H.$ For $G$ a closed subset of $[0,1]\times [0,1]$, and $n$ positive integer, let $G^{0,n}=\pi_{\{0,n\}}(\star_{i=1}^n G).$ Therefore, $G^{0,n}$ is a closed subset of $[0,1]\times [0,1].$

\vphantom{}

With $\lim_{H_d}$ we denote limit with respect to the Hausdorff metric. Let us recall that Hausdorff metric:
$$H_{d}\left( A,B\right) =\max \left\{ \sup_{a\in A}d\left( a,B\right) ,\sup_{b\in B}d\left( b,A\right) \right\} .$$
%This equation is different from the one in the first chapter but they are actually equivalent.

\begin{prop}
	Let $G$ be a connected and closed subset of $[0,1]^2$ such that $\lim_{H_d} G^{0,n}=bL$ where $bL$ is from the previous example. Then $\ent(G)=0.$	
\end{prop}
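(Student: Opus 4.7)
The plan is to exploit the hypothesis by showing that for any minimal open interval cover $\alpha$ of $I_0$ and any sufficiently large $n$, the number of boxes in $\alpha^{kn+1}$ needed to cover $\star_{i=1}^{kn}G$ grows polynomially in $k$ times a factor that becomes negligible as $n\to\infty$. The mechanism is that $G^{0,n}$ being close to $bL$ forces the coarse (every $n$-th coordinate) trajectory to lie in a very restricted family of state words, and it further pins most interior coordinates of each block to be close to $0$ or close to $1$.

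Concretely, fix a minimal open interval cover $\alpha$ of $I_0$ with $p=|\alpha|$. By minimality there is a unique $\alpha_H\in\alpha$ containing $1$ and a unique $\alpha_L\in\alpha$ containing $0$; choose $\epsilon>0$ so small that $\epsilon<1/2$, $\alpha_H$ is the only member of $\alpha$ meeting $(1-\epsilon,1]$, and $\alpha_L$ is the only member meeting $[0,\epsilon)$. By the hypothesis pick $n_0$ with $G^{0,n}\subset U_\epsilon:=\{y<\epsilon\}\cup\{x>1-\epsilon\}$ for every $n\geq n_0$. Fix $n\geq 2n_0$ and $k\geq 1$. For $(x_0,\ldots,x_{kn})\in\star_{i=1}^{kn}G$, label each coarse coordinate $x_{jn}$ as $H$, $M$, or $L$ according to whether it lies in $(1-\epsilon,1]$, $[\epsilon,1-\epsilon]$, or $[0,\epsilon)$. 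Since $(x_{jn},x_{(j+1)n})\in G^{0,n}\subset U_\epsilon$, the transition rule ``if $s_j\neq H$ then $s_{j+1}=L$'' holds, so every admissible state word $(s_0,\ldots,s_k)$ has the form $H^a X L^{k-a}$ with $X\in\{H,M,L\}$, giving only $O(k)$ admissible words.

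For each admissible state word I would count fine-box labelings block by block. An interior coordinate $x_{jn+m}$ inside an $(H,H)$-block satisfies $(x_{jn+m},x_{(j+1)n})\in G^{0,n-m}\subset U_\epsilon$ whenever $m\leq n-n_0$; since $x_{(j+1)n}>1-\epsilon$, this forces $x_{jn+m}>1-\epsilon$, so its $\alpha$-label must be $\alpha_H$. Thus only the last $n_0-1$ interior positions remain free, producing $B_{HH}\leq p^{n_0-1}$, independently of $n$. Symmetrically $B_{LL}\leq p^{n_0-1}$; the transition blocks $(H,M)$ and $(M,L)$ admit similar bounds of order $p^{n_0}$. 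Only the $(H,L)$ block has both halves of the forcing argument vacuous, giving the weaker $B_{HL}\leq p^{n-1}$, but such a block appears at most once in any admissible word. Summing over the $O(k)$ admissible words yields
\[
N(\star_{i=1}^{kn}G,\alpha^{kn+1})\;\leq\;C_1\cdot k\cdot p^{\,n-1}\cdot p^{\,(n_0-1)(k-1)},
\]
where $C_1$ depends only on $\alpha$ and $n_0$. Dividing by $kn$ and letting $k\to\infty$ gives
\[
\ent(G,\alpha)\;\leq\;\frac{(n_0-1)\log p}{n}
\]
for every $n\geq 2n_0$; letting $n\to\infty$ then gives $\ent(G,\alpha)=0$, and taking the supremum over $\alpha$ yields $\ent(G)=0$.

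The main obstacle will be the clean bookkeeping of the state decomposition: verifying that with $\epsilon$ chosen sufficiently small the forcing constants really are $1$ and do not depend on $n$, and checking that the single (at most one) unconstrained $(H,L)$ block contributes only a constant factor in $k$ so that its exponential-in-$n$ size is washed out after dividing by $kn$ and sending $k\to\infty$ before $n\to\infty$. The connectedness of $G$ does not appear to enter the counting argument directly; it is likely present to ensure well-behavedness of the Hausdorff limit of the closed sets $G^{0,n}$ and to exclude degenerate isolated-point scenarios that in any case cannot satisfy $G^{0,n}\to bL$.
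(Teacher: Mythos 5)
Your proposal is correct in its essentials, but it takes a genuinely different route from the paper's. The paper argues structurally: using the connectedness of $G$ together with $\lim_{H_d}G^{0,n}=bL$, it shows that $G$ meets the diagonal only at $(0,0)$ and $(1,1)$, that $\pi_0(G)=\pi_1(G)=[0,1]$, and that $G$ contains no point above the diagonal; hence $G$ is a closed subset of the set $\{(x,y):x\ge y\}$ of the Triangle Example (Example \ref{exam triangle}), and $\ent(G)=0$ follows from Proposition \ref{ent subset}(c) together with the computation $\ent(\{x\ge y\})=0$. You instead estimate the covering numbers directly from the inclusion $G^{0,n}\subset U_\epsilon$: the coarse itinerary is forced to have the form $H^aXL^{k-a}$, all but boundedly many interior coordinates of each $(H,H)$ or $(L,L)$ block are pinned to $\alpha_H$ or $\alpha_L$, and the single unconstrained block is washed out after dividing by $kn$. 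This buys two things the paper's proof does not: an explicit quantitative bound $\ent(G,\alpha)\le (n_0-1)\log p/n$, and the elimination of the connectedness hypothesis, which the paper uses essentially in its step (iii) but which your counting never touches, so you in fact prove a stronger proposition. The costs are length and the bookkeeping you yourself flag, all of it repairable: the $\epsilon$-neighborhood of $bL$ yields non-strict inequalities, so boundary cases such as $x_{(j+1)n}=\epsilon$ should be handled by separating thresholds; you do not need (and cannot always arrange, e.g.\ if $\alpha$ contains an interval of the form $(a,1)$) that $\alpha_H$ be the \emph{unique} member of $\alpha$ meeting $(1-\epsilon,1]$ --- since $N$ counts a minimal subcover, it suffices that $[1-\epsilon,1]\subset\alpha_H$ so that each forced coordinate may be assigned the label $\alpha_H$ in the subcover you construct; and the at most two transition blocks $(H,M)$, $(M,L)$ (or the single $(H,L)$) can all be bounded crudely by $p^{\,n-1}$ without affecting the limit, which shortens the case analysis.
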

\begin{proof}
	We have $\lim_{H_d} G^{0,n} = bL$  i.e. for each $\epsilon >0$ there is positive integer $n_0$ such that for every positive integer $n, n \geq n_0$ it follows $H_d (G^{0,n},bL) < \epsilon$.
	We divide the proof in several steps:
	\begin{itemize}
		\item[(i)] $G$ does not contain any point on diagonal not equal to $(0,0)$ and $(1,1)$. 
		\\ Assume the contrary, i.e. if it does contain point $(x,x), x \neq 0, 1$, then $G^{0,n}$ also contains that point, for all $n \in \mathbb N$. 
		Then, $d((x,x),bL)= \left \{ 
		\begin{array}{cl}  
		x, & x \leq \frac{1}{2} \\ 
		1-x, & x >\frac{1}{2} 
		\end{array} 
		\right.$  but either way it is greater than 0.
		
		Therefore, we get  $H_d(G^{0,n},bL) \geq d((x,x),bL)>0, \forall n \in \mathbb N,$ hence we cannot have $\lim_{H_d} G^{0,n} = bL$.
		
		\item[(ii)] $\pi_0 (G)=[0,1]$ and $\pi_1 (G)=[0,1].$
		\\ Suppose $\pi_0 (G) = J_0$ where $J_0$ is a closed and proper subset of $[0,1].$ Then, $G^{0,n} \subseteq  J_0 \times [0,1] \subset [0,1] \times [0,1]$ for all $n \in \mathbb N.$ Since $bL$ contains $[0,1] \times \left\{ 0 \right \}$, there exists a point $(x,0) \in bL \setminus \left( J_0 \times \left\{ 0 \right \} \right)$ such that $d((x,0), J_0)=d_0 >0$ (because $J_0$ is closed). Now we have that $0< d_0=d((x,0), J_0 \times [0,1] ) \leq d((x,0), G^{0,n})$ and therefore $H_d (G^{0,n},bL) \geq d_0 >0.$ So we get $\lim_{H_d} G^{0,n} \neq  bL$ which is contradiction. Similarly, $\pi_1 (G)=[0,1].$
		
		\item[(iii)] $G$ doesn't contain point $(x_0,y_0)$ above the diagonal.
		\\
		Suppose that $G$ contains a point $(x_0,y_0)$ above the diagonal, i.e. $y_0>x_0.$
		The set $G$ is closed, connected and by (ii),  $\pi_0 (G)=[0,1]$ and $\pi_1 (G)=[0,1],$ therefore $G$ intersects the diagonal in some point $(x,x), x \in [0,1].$ By (i) it follows that $x$ is equal to $0$ or $1$. 
		\\
		Suppose that $x=0.$ Since $G$ is connected and closed, it contains a subcontinuum connecting points $(x_0,y_0)$ and $(0,0)$ which we denote with $K$. Since $K$ is a subcontinuum, for each positive integer $n$ there exists finite sequence of points $x_{n-1} < x_{n-2} <\ldots < x_0 < y_0$ in $[0,1]$ such that $(x_{n-1}, x_{n-2}, \ldots , x_0, y_0) \in \star_{i=1}^{n} K.$ Therefore, since $K \subset G,$ we have $H_d(G^{0,n},bL)  \geq H_d \left( K^{0,n} , bL\right) \geq  d((x_0,y_0),bL) > 0, $ hence we cannot have $\lim_{H_d} G^{0,n} = bL.$
		\\
		If we suppose that $x=1,$ we get the contradiction in the same way.

		\begin{comment}Suppose $G$ contains point  $(x_0,x_1)$ such that $x_1 > x_0.$ From (ii) it follows that exists $x_2 \in [0,1]$ such that $(x_1,x_2) \in  G, $ then exists point $x_3 \in [0,1]$ such that $(x_2,x_3) \in  G, $ and we continue inductively. 
		Now we have that the point $(x_0,x_1,x_2,\ldots)$ would be in $\star_{i=1}^{\infty}G$ and hence $(x_0,x_n) \in G^{0,n}, \forall n \in \mathbb N.$  We conclude in the same way as in (i) that  $\lim_{H_d} G^{0,n} \neq bL.$
		\end{comment}
	\end{itemize}
	
	From (i) and (iii) it follows that set $G$ is under the diagonal, except points $(0,0)$ and $(1,1)$ i.e. $$(\forall (x,y) \in G \setminus \left \{ (0,0),(1,1)\right \}, \ y<x.) \ (*)$$
	
	Now we have that $G$ is a subset of the set in the Triangle Example and therefore $\ent(G)=0.$
	
\end{proof}

\vphantom{}

Now we return to the Triangle Example and its properties. Let  $M$ denote the Mahavier product $\star_{i=1}^{\infty}G$ produced by this example. It was noted in \cite{KT} that $M$ contains copies of the Hilbert cube. It may be homeomorphic to it. 

\vphantom{}

The dynamics of $\sigma$ on $M$ are quite easy to describe. If $\mathbf{x}= (x_0,x_1,\ldots)$ is a point in $M$, then 
$x_0 \ge x_1 \ge \cdots$ and, as a consequence, if $x^* = \inf \{x_i: i \ge 0\}$, then $\mathbf{x}, \sigma(\mathbf{x}), \sigma^2(\mathbf{x}), \ldots$ converges to the point $\mathbf{x}^*=(x^*,x^*,\ldots)$. And if you look backwards to where points ``come from'', so, consider the double-sided inverse limit $\star_{i=-\infty}^{\infty}G$, then if $x^* = \sup \{x_{-i}: i \ge 0 \}$, then $\mathbf{x}, \sigma^{-1}(\mathbf{x}), \sigma^{-2}(\mathbf{x}), \ldots$ converges to the point $\mathbf{x}^*=(x^*,x^*,\ldots)$. 

\vphantom{}

In the Triangle Example  points can ``slow down'' and hang around as long as they wish before continuing to their destination. The propositions below make this precise. There are many different definitions of chaotic maps (having positive entropy is one of them), but sensitive dependence on intial conditions is a property that virtually all chaotic maps, or maps chaotic on some subset, share. The usual definition of sensitive dependence on inititial conditions is given below. The Triangle Example doesn't have that, but it has a sort of weak version of it, and also a sort of weak version of the specification property. We define weak sensitivity and show that the Traingle Example has weak sensitivity on a subset.

\vphantom{} 

 Suppose $X$ is a compact metric space and $f:X \to X$ is a map. Then $f$ has \textit{sensitive dependence on initial conditions (SDIC)}  if there is some $\delta>0$ such that for each $\epsilon > 0$ and each point $x$ in $X$, there are a point $y$ in $X$ with $d(x,y)< \epsilon$, and an integer $n>0$ such that $d(f^n(x),f^n(y))>\delta$.

\vphantom{}

 Suppose $X$ is a compact metric space and $f:X \to X$ is a map. Then $f$ has \textit{ weak sensitivity (WS) } at the point $x$ if there is some $\delta>0$ such that for each $\epsilon > 0$ there are a point $y$ in $X$ with $d(x,y)< \epsilon$ and an integer $n>0$ such that $d(f^n(x),f^n(y))>\delta$. $f$  has \textit{weak sensitivity (WS) on the set}  $A \subset X$ if  there is some $\delta>0$ such that for each $\epsilon > 0$ and each $x \in A$ there are a point $y$ in $X$ with $d(x,y)<\epsilon$ and an integer $n>0$ such that $d(f^n(x),f^n(y))>\delta$.

\begin{prop} 
Suppose $1>\delta >0$, and $M_{\delta}$ is the closed subset $\{ \mathbf{y}=(y_0,y_1, \ldots) \in M:  y_i \ge \delta \text{ for each } i \}$. Then $\sigma$ has WS at each point of $M_{\delta}$.

\end{prop}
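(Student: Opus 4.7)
The plan is to exploit the fact that $M$ consists of non-increasing sequences (since $G$ is the triangle $x \geq y$), so given any non-increasing sequence in $M_\delta$ bounded below by $\delta$, we can produce a nearby sequence that suddenly ``drops to $0$'' after some coordinate; the two sequences then remain close for a long initial segment but diverge uniformly thereafter under iteration of the shift.

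Concretely, fix $\mathbf{x} = (x_0, x_1, \ldots) \in M_\delta$ and $\epsilon > 0$. First I would pick a positive integer $N$ with $2^{-N} < \epsilon$, and define
\[
\mathbf{y} = (x_0, x_1, \ldots, x_N, 0, 0, 0, \ldots).
\]
The first step is to verify $\mathbf{y} \in M$: the pairs $(x_{i-1}, x_i)$ for $1 \le i \le N$ lie in $G$ because $\mathbf{x} \in M$, the pair $(x_N, 0)$ lies in $G$ because $x_N \geq 0$, and $(0,0) \in G$ trivially. Next, since $\mathbf{x}$ and $\mathbf{y}$ agree on coordinates $0, \ldots, N$, the metric on $I^\infty$ immediately gives
\[
d(\mathbf{x},\mathbf{y}) = \sum_{i=N+1}^{\infty} \frac{|x_i - 0|}{2^i} \le \sum_{i=N+1}^{\infty} \frac{1}{2^i} = \frac{1}{2^N} < \epsilon.
\]

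Now I would apply $\sigma^{N+1}$. Since $\sigma^{N+1}(\mathbf{y}) = (0,0,\ldots)$ and $\sigma^{N+1}(\mathbf{x}) = (x_{N+1}, x_{N+2}, \ldots)$, and since every coordinate $x_{N+1+j} \geq \delta$ by the assumption $\mathbf{x} \in M_\delta$, we obtain
\[
d(\sigma^{N+1}(\mathbf{x}), \sigma^{N+1}(\mathbf{y})) = \sum_{j=0}^{\infty} \frac{x_{N+1+j}}{2^j} \geq \sum_{j=0}^{\infty} \frac{\delta}{2^j} = 2\delta.
\]
Hence with the uniform constant $\delta' := \delta$ (or any value less than $2\delta$), the weak-sensitivity condition is met at every point of $M_\delta$, taking $n = N+1$.

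There is essentially no obstacle here beyond being careful that the perturbed sequence $\mathbf{y}$ still lies in $M$ (which works precisely because $G$ contains all pairs $(x,0)$ and the diagonal) and that the lower bound $\delta$ on the coordinates of $\mathbf{x}$ propagates to a uniform lower bound on all shifted coordinates. The fact that all points of $M_\delta$ yield the \emph{same} separation constant $\delta'$ independent of $\mathbf{x}$ and $\epsilon$ is exactly what is needed for weak sensitivity on the set $M_\delta$.
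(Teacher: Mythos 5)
Your proof is correct and follows essentially the same route as the paper's: perturb $\mathbf{x}$ by truncating to $0$ after a coordinate of index $N$ with $2^{-N}<\epsilon$, then apply $\sigma^{N+1}$ and use the lower bound $\delta$ on the coordinates of $\mathbf{x}$ to get separation $2\delta>\delta$. The only addition is your explicit check that the perturbed point lies in $M$, which the paper leaves implicit.
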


\begin{proof}
	Let $\epsilon>0$ and $ x= (x_0,x_1,\ldots) \in M_\delta$ be arbitrary. Suppose $y =(x_0,x_1,\ldots, x_{i_0},0,\ldots)$ where $i_0$ is positive integer such that $i_0 > \log_2 \frac{1}{\epsilon}.$ We have $d( x,  y)=\sum_{i=0}^\infty \frac{|x_i-y_i|}{2^i}=\sum_{i=i_0+1}^\infty \frac{|x_i|}{2^i}\leq \sum_{i=i_0+1}^\infty \frac{1}{2^i}=\frac{1}{2^{i_0}}<\epsilon$ and $\sigma^{i_0+1}(y) =\textbf 0$ (where $\textbf{0}=(0,0,0,...)$). Hence, $d(\sigma^{i_0+1}( x), \sigma^{i_0+1}( y))=d((x_{i_0+1},x_{i_0+2},\ldots),\mathbf 0)=\sum_{i=0}^\infty \frac{|x_{i_0+1+i}|}{2^i}\geq \delta \sum_{i=0}^\infty \frac{1}{2^i}=2\delta>\delta$ and we are done.
	
\end{proof}

 \begin{prop} Let $M'=\cup_{i=1}^{\infty} M_{1/i}$. Then $M'$ is dense and connected in $M$ and is a union of closed sets each of which has WS. However, each $M_{1/i}$ has empty interior and the $M_{1/i}$'s are nested, so $M'$ is a first category $F_{\sigma}$-set. (Also, note that $M'$ itself does not have WS.)

\end{prop}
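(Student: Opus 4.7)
The plan is to verify each of the claims in the statement in turn, treating the final parenthetical remark (that $M'$ itself fails to have WS) as the only substantive step; everything else reduces to manipulations of non-increasing sequences in $I^\infty$ together with the immediately preceding proposition.

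For density I would take $\mathbf{x}=(x_0,x_1,\ldots)\in M$ and truncate from below: set $y^{(k)}_j=\max(x_j,1/k)$. Monotonicity of $\mathbf{x}$ is preserved, so $\mathbf{y}^{(k)}\in M_{1/k}\subseteq M'$, and each coordinate moves by at most $1/k$, giving $d(\mathbf{x},\mathbf{y}^{(k)})\le 2/k\to 0$. Each $M_{1/i}$ is closed, since coordinatewise limits preserve $y_j\ge 1/i$, and the containments $M_{1/i}\subseteq M_{1/(i+1)}$ are immediate. For empty interior I would take $\mathbf{x}\in M_{1/i}$ and truncate the tail: $\mathbf{y}=(x_0,\ldots,x_{N-1},0,0,\ldots)$ lies in $M\setminus M_{1/i}$ for every $N$, and $d(\mathbf{x},\mathbf{y})\le 2^{1-N}$, so $\mathbf{x}$ is approximated from outside $M_{1/i}$. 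Combining, $M'$ is $F_\sigma$ and each summand is nowhere dense, so $M'$ is first category.

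For connectedness I would observe that each $M_{1/i}$ is a convex subset of $I^\infty$: if $\mathbf{y},\mathbf{z}\in M_{1/i}$ and $t\in[0,1]$, then $(1-t)y_j+tz_j\in[1/i,1]$ is again non-increasing. Convex sets are path-connected, and since the constant sequence $\mathbf{1}=(1,1,\ldots)$ belongs to every $M_{1/i}$, the union $M'$ is connected. The WS property for each $M_{1/i}$ is immediate from the preceding proposition applied with $\delta=1/i$, which provides a uniform sensitivity constant $1/i$ at every point of $M_{1/i}$.

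The genuine obstacle is the last assertion, that $M'$ itself has no WS constant, and the key observation is that the individual WS constants $1/i$ shrink to zero. Given a candidate $\delta>0$, I would choose $k$ with $2/k<\delta$ and take the constant sequence $\mathbf{x}=(1/k,1/k,\ldots)\in M_{1/k}\subseteq M'$. Any $\mathbf{y}\in M$ with $d(\mathbf{x},\mathbf{y})<\epsilon<1/k$ satisfies $|y_0-1/k|<\epsilon$, and then monotonicity of $\mathbf{y}$ traps every coordinate in $[0,1/k+\epsilon]$, so $|y_j-1/k|\le 1/k$ for all $j$. Since $\sigma^n(\mathbf{x})=\mathbf{x}$, this yields $d(\sigma^n(\mathbf{x}),\sigma^n(\mathbf{y}))\le 2/k<\delta$ for every $n$. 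Hence no $\delta>0$ can be a WS witness on $M'$. The subtlety — and the heart of the proof — is choosing the right base point: the constant sequences at very small height exploit both the monotonicity constraint and the positivity of their coordinates to confine all nearby orbits to a small neighborhood, which is exactly the structural obstruction to a uniform sensitivity constant across $M'$.
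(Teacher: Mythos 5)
Your proof is correct, and in fact the paper states this proposition without any proof at all, so there is nothing to compare it against; your write-up supplies a complete argument. The routine parts (density via $y_j=\max(x_j,1/k)$, closedness and nestedness of the $M_{1/i}$, empty interior via tail truncation to $0$, convexity giving connectedness through the common point $(1,1,\ldots)$, and the uniform sensitivity constant $\delta$ coming from the preceding proposition) all check out against the definitions in the paper. The one substantive claim is the parenthetical failure of WS for $M'$, and your argument there is sound: for any candidate $\delta$, the constant sequence $\overline{1/k}$ with $2/k<\delta$ is a fixed point of $\sigma$, and the monotonicity constraint $y_j\le y_0<1/k+\epsilon$ forces every coordinate of any $\epsilon$-close point $\mathbf{y}\in M$ to stay within $1/k$ of $1/k$, so $d(\sigma^n(\overline{1/k}),\sigma^n(\mathbf{y}))\le 2/k<\delta$ for all $n$; this is exactly the negation of the paper's definition of WS on a set (which quantifies a single $\delta$ uniformly over all points of the set). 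This also explains cleanly why the individual constants $1/i$ cannot be replaced by a uniform one.
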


\vphantom{}

\textbf{Notation.}  For $0 \le x \le 1$, let $\overline{x}=(x,x,x,\ldots)$, and note that $\overline{x} \in M$. Let the \textit{diagonal}, denoted $\Delta$, be the following subset of $M$:

$$ \Delta = \{ \overline{x}: 0 \le x \le 1 \}.$$

\vphantom{}
 
\textbf{Notation.} Suppose $z \in [0,1]$ . Let $z^m$ denote the point  $\overbrace{z,z,z,\ldots,z}^m$ in $I^m$. If $z_i \in [0,1]$ for each positive integer $i$, and  $m_i$ is a positive integer for each positive integer $i$, let $z_1^{m_1} \oplus z_2^{m_2} \oplus \cdots$ denote the concatenation 

$$(\overbrace{z_1,z_1,\ldots,z_1}^{m_1},\overbrace{z_2,z_2,\ldots,z_2}^{m_2}, \cdots).$$

\vphantom{}

The following shows that we have a weak form of a  shadowing or specification property with the triangle example.

\begin{prop} 
	Suppose $1 > z_1 > z_2> \cdots \ge 0$, and $\{m_i \}_{i=1}^\infty$ is a sequence of natural numbers greater than $1$. Then if $x=z_1^{m_1} \oplus z_2^{m_2} \oplus \cdots$, $d(x,\overline{z_1})\le 2^{-m_1+1}$, $d(\sigma^{m_1}(x),\overline{z_2})\le 2^{-m_2+1}$, $d(\sigma^{m_1+m_2}(x),\overline{z_3})\le 2^{-m_3+1}$, and so on. Suppose, in addition, $\{n_j \}_{j=1}^\infty$ a sequence of natural numbers greater than $1$, and $y=z_1^{m_1+n_1} \oplus z_2^{m_2+n_2} \oplus \cdots$. Then $d(\sigma^j(y),\overline{z_1})\le 2^{-m_1+1}$ for $0 \le j \le n_1$,  $d(\sigma^{m_1+n_1+j}(y),\overline{z_2})\le 2^{-m_2+1}$ for $0 \le j \le n_2$, and so on.
	
\end{prop}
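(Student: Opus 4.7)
The plan is to verify all four inequalities by direct computation using the explicit metric $d(u,v)=\sum_{i=0}^\infty 2^{-i}|\pi_i(u)-\pi_i(v)|$ on $I^\infty$. The structural observation that drives everything is that if two points of $I^\infty$ agree on their first $k$ coordinates, then their remaining coordinates can differ by at most $1$ in absolute value, so their distance is at most
\[
\sum_{i=k}^{\infty} \frac{1}{2^i} \;=\; \frac{1}{2^{k-1}} \;=\; 2^{-k+1}.
\]
Every bound in the proposition will come from this single estimate applied to the appropriate shift.

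First I would check that $x$ (and similarly $y$) actually lies in $M=\mathbf{G}$. Within any single block the consecutive pair is $(z_i,z_i)$, which lies on the diagonal of $G$; at the seam between the $i$-th and $(i{+}1)$-st blocks the pair is $(z_i,z_{i+1})$ with $z_i>z_{i+1}$, so again lies in $G=\{(a,b):a\geq b\}$. Hence $x\in\star_{j=1}^\infty G=M$, and the same argument (using $z_i>z_{i+1}\geq 0$ and blocks of length $m_i+n_i$) gives $y\in M$.

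Next I would compute the shift coordinates. For $x$, the shift $\sigma^{m_1+m_2+\cdots+m_{k-1}}(x)$ has the form $(z_k,\ldots,z_k,z_{k+1},\ldots)$, with exactly $m_k$ initial coordinates equal to $z_k$. Comparing with $\overline{z_k}=(z_k,z_k,\ldots)$, the first $m_k$ coordinates agree, so the tail estimate above gives
\[
d\!\left(\sigma^{m_1+\cdots+m_{k-1}}(x),\overline{z_k}\right)\;\le\; 2^{-m_k+1},
\]
which covers all of the inequalities listed for $x$ (the case $k=1$ is $d(x,\overline{z_1})\le 2^{-m_1+1}$). For $y$, the first $m_1+n_1$ coordinates equal $z_1$, so for $0\le j\le n_1$ the shift $\sigma^j(y)$ has at least $m_1+n_1-j\ge m_1$ leading coordinates equal to $z_1$; the tail estimate then yields $d(\sigma^j(y),\overline{z_1})\le 2^{-m_1+1}$. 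The same argument, applied after shifting past the first block, handles the blocks indexed $2,3,\ldots$.

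There is no substantive obstacle; the entire proposition is a metric-tail bookkeeping exercise, and the main thing to be careful about is indexing — making sure that after a shift of $m_1+n_1+\cdots+m_{k-1}+n_{k-1}+j$ one really has at least $m_k$ leading coordinates equal to $z_k$, which requires $0\le j\le n_k$ exactly as stated.
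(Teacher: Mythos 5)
Your proposal is correct and follows essentially the same route as the paper's proof: both reduce every inequality to the observation that the relevant shift of $x$ (or $y$) agrees with the constant sequence $\overline{z_k}$ on at least its first $m_k$ coordinates, so the metric sum starts at index $m_k$ and is bounded by $\sum_{i\ge m_k}2^{-i}=2^{-m_k+1}$. Your added check that $x$ and $y$ actually lie in $M$ is a reasonable extra step the paper leaves implicit, but it does not change the argument.
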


\begin{proof} Let $\pi_i(x)=x_i, i \geq 0.$ The proof is straightforward: Note that $d(x,\overline{z_1}) = \Sigma_{i=0}^{\infty}\frac{|x_i -z_1|}{2^i}=\Sigma_{i=m_1}^{\infty}\frac{|x_i -z_1|}{2^i}\le 2^{-m_1+1}$, and then that $d(\sigma^{m_1}(x),\overline{z_2})= \Sigma_{i=0}^{\infty}\frac{|x_{m_1+i} -z_2|}{2^i}=\Sigma_{i=m_2}^{\infty}\frac{|x_{m_1+i} -z_2|}{2^i} \le 2^{-m_2+1}$. This pattern continues. The second part is just an extension of the first.
	
\end{proof}

%\begin{corollary} 
%	Suppose $1 > z_1 > z_2> \cdots \ge 0$, and $1< m_1 < m_2 < m_3 < m_4 < \dots$ (with the $m_i$ being natural numbers). %Then if $x=z_1^{m_1} \oplus z_2^{m_2} \oplus \cdots$, $d(x,\overline{z_1})\le 2^{-m_1+1}$, $d(\sigma^{m_1}(x),%\overline{z_2})\le 2^{-m_2+1}$, $d(\sigma^{m_1+m_2}(\mathbf{x}),\overline{z_3})\le 2^{-m_3+1}$, and so on, i.e. %$\lim\limits_{i\to \infty} d(\sigma^{m_i}(\mathbf{x}),\overline{z_i})=0$. Suppose, in addition, $1< n_1 < n_2 < n_3 < n_4 < %\dots$ (with the $n_i$ being natural numbers), and $\mathbf{y}=z_1^{m_1+n_1} \oplus z_2^{m_2+n_2} \oplus \cdots$. Then %$d(\sigma^j(\mathbf{y}),\overline{z_1})\le 2^{-m_1+1}$ for $0 \le j \le n_1$,  $d(\sigma^{m_1+n_1+j}(\mathbf{y}),%\overline{z_2})\le 2^{-m_2+1}$ for $0 \le j \le n_2$, and so on.
	
%\end{corollary}

%For a dynamical system with non-wandering set consisting of fixed points, and all other points wandering, the behavior is quite %interesting. All the wandering points (this is discussed below) converge to some point on the diagonal set $\Delta$. But they can %converge very quickly, or they can ``dawdle'' for arbitrarily long periods of time about other diagonal points. (This ``dawdling'' is %what the previous proposition was about, and it is really saying that the point ``shadows'' the diagonal points. 

\begin{prop} In the Triangle Example $M$, for each $0<a<1$, the open set $V_a=M \cap ((a,1] \times [0,a) \times I^{\infty})$ is a simple wandering set for $\sigma$. For each $n \in \mathbb{N}$, the set $M \cap (I^n \times V_a)$ is also a simple wandering set. The nonwandering set $\Omega_{\sigma}= \Delta= \{ \overline{x}=(x,x,x,\ldots): 0\le x \le 1 \}$, and so the wandering set $M \setminus \Omega_{\sigma}= M \setminus\Delta =\{x=(x_0,x_1,\ldots) \in M: \text{ for some } i, x_i \ne x_{i+1} \}.$

\end{prop}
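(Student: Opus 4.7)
The cornerstone of the argument is that $(x_0,x_1,\ldots)\in M$ if and only if $x_0\ge x_1\ge x_2\ge\cdots$, since for this example $G=\{(x,y):x\ge y\}$. The plan is to exploit this monotonicity to track the index at which a sequence of $M$ descends across the threshold $a$, and to verify the four claims in turn.

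First, to see that $V_a$ is simple wandering, note that any $x\in V_a$ satisfies $x_0>a>x_1\ge x_2\ge\cdots$, so for every $m\ge 1$ the shift $\sigma^m(x)$ has first coordinate $x_m\le x_1<a$; hence $\sigma^m(V_a)\cap V_a=\emptyset$. For the preimage direction, $z\in\sigma^{-m}(V_a)$ forces $z_m>a$, and monotonicity gives $z_1\ge z_m>a$, so $z\notin V_a$ and $V_a\cap\sigma^{-m}(V_a)=\emptyset$. For $W=M\cap(I^n\times V_a)$, the analogous reasoning applies with the ``descent across $a$'' shifted $n$ positions further: a point $w\in W$ is characterized by $w_0\ge\cdots\ge w_n>a>w_{n+1}\ge w_{n+2}\ge\cdots$. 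For $m\ge 1$, if $\sigma^m(w)\in W$ then $w_{m+n}>a>w_{m+n+1}$, but monotonicity gives $w_{m+n}\le w_{n+1}<a$, a contradiction; similarly, $z\in\sigma^{-m}(W)\cap W$ would force simultaneously $z_{n+1}<a$ and $z_{m+n}>a$ with $m+n\ge n+1$, again impossible by monotonicity. Thus $W$ is simple wandering.

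For the identification of $\Omega_\sigma$, the inclusion $\Delta\subset\Omega_\sigma$ is immediate because every $\overline{x}\in\Delta$ is a fixed point of $\sigma$: for any open neighborhood $U$ of $\overline{x}$ in $M$, one has $\overline{x}=\sigma(\overline{x})\in U\cap\sigma(U)$, so $\overline{x}$ is nonwandering. Conversely, if $z\in M\setminus\Delta$, then $z_0\ge z_1\ge\cdots$ is monotone but not constant, so there exists $i\ge 0$ with $z_i>z_{i+1}$. Choose any $a\in(z_{i+1},z_i)$; then $z$ lies in $V_a$ (if $i=0$) or in $M\cap(I^i\times V_a)$ (if $i\ge 1$), which by the previous step is a simple wandering open neighborhood of $z$. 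Hence $z$ is wandering, giving $\Omega_\sigma\subset\Delta$, and together with the reverse inclusion we conclude $\Omega_\sigma=\Delta$.

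There is no real obstacle here; the entire proof is just careful bookkeeping of the single coordinate at which a monotone sequence of $M$ first drops below $a$. The only minor point of care is to interpret $\sigma^m(U)$ for $m<0$ as the $|m|$-fold preimage under $\sigma$, which is legitimate by the equivalence of Bowen's and Walters' definitions of a wandering point established earlier in the paper.
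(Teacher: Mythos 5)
Your proposal is correct and follows essentially the same route as the paper: use the monotonicity $x_0\ge x_1\ge\cdots$ of points of $M$ to show that once a point of $V_a$ (or of $M\cap(I^n\times V_a)$) drops below $a$ it can never return above it under forward shifts nor have been below it under backward shifts, and then cover every point off $\Delta$ by such a set while noting that points of $\Delta$ are fixed. The only difference is cosmetic: you write out explicitly the $M\cap(I^n\times V_a)$ case that the paper dismisses as straightforward.
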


\begin{proof}

Suppose $V_a \cap \sigma^m(V_a) \ne \emptyset$ for some $m \ne 0$. If $m >0$, then there is some $x=(x_0,x_1,x_2,\ldots) \in V_a$ such that $\sigma^m(x) \in V_a$. But then $x_0 \in (a,1]$ and $x_1 \in [0,a)$ and $x_0 \ge x_1 \ge x_2 \ge \cdots$. Since $\sigma^m(x)=(x_m,x_{m+1},\ldots)$, $x_m <a$, so $x_m \notin (a,1]$. This is a contradiction. 

\vphantom{}

Suppose $m <0$. Then there is some $x=(x_0,x_1,x_2,\ldots) \in V_a \cap \sigma^m(V_a)$. Then $\sigma^{-m}(x)=(x_{-m},x_{-m+1},\ldots) \in V_a$. But $x_0 \in (a,1]$, $x_1 \in [0,a)$, and $x_0 \ge x_1 \ge x_2 \ge \cdots$, and again we have a contradiction. Thus, $V_a$ is a wandering set for each $0<a<1$.

\vphantom{}

It is straightforward to show that  $M \cap (I^n \times V_a)$ for each $n>0$ is also wandering. If $x=(x_0,x_1,\ldots) \notin \Delta$, then there is least $i \ge 0$ such that $x_i >x_{i+1}$. There is some $0<a<1$ such that $x_i >a >x_{i+1}$. If $i=0$, then $x \in V_a$. If $i>0$, then $x \in I^{i}\times V_a$. Hence $M\setminus\Delta$ is a subset of the wandering set. 

\vphantom{}

 Clearlly, $\Delta \subset \Omega_{\sigma}$.Then  $\Delta = \Omega_{\sigma}$ and  $M \setminus\Delta = M \setminus \Omega_{\sigma}$

\end{proof}

 What happens if we add a point to the set $G$ in $[0,1]^2$ above? The sequence of lemmas below discusses this.
 
\vphantom{}

\begin{lemma}

Let $H' = \{(x,0): 0 \le x \le 1 \} \cup \{(1,y): 0 \le y \le 1 \}$ and $H=H' \cup \{(0,1)\}$. Then $H' \subset G$ and  $H$ has $\infty$ entropy.

\end{lemma}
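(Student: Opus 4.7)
The containment $H'\subset G$ will be an immediate check: every bottom-edge point $(x,0)$ satisfies $x\ge 0$, and every right-edge point $(1,y)$ satisfies $1\ge y$, so each lies in the triangle $G=\{(x,y):x\ge y\}$. The substance of the lemma is the jump in entropy produced by adding the single point $(0,1)$, and my plan is to extract this by exhibiting an explicit family of admissible words in $\star_{i=1}^{m}H$ whose number grows exponentially with $m$.

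The key structural observation is that $H$ contains a three-step ``free loop.'' Starting from $1$, the right edge $\{1\}\times I$ allows a transition to any $y\in I$; from $y$ the bottom edge $I\times\{0\}$ forces a return to $0$; and from $0$ the new point $(0,1)$ sends us back to $1$. Consequently, for each positive integer $k$ and each choice $(y_1,\ldots,y_k)\in I^k$, the point
$$
p_{y_1,\ldots,y_k}=(1,y_1,0,1,y_2,0,\ldots,1,y_k,0)\in I^{3k}
$$
is in $\star_{i=1}^{3k-1}H$, because each consecutive pair lies on the right edge, on the bottom edge, or equals $(0,1)$.

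Fix an arbitrary minimal open-interval cover $\alpha=\{\alpha_1,\ldots,\alpha_n\}$ of $I$. Minimality lets me select, for each $i\in\langle 1,n\rangle$, a witness $w_i\in\alpha_i$ lying in no other member of $\alpha$. For each tuple $(i_1,\ldots,i_k)\in\langle 1,n\rangle^k$, setting $y_j=w_{i_j}$ produces an admissible point whose $j$th ``free'' coordinate forces any enclosing box in $\alpha^{3k}$ to use $\alpha_{i_j}$ in that slot. Different tuples therefore demand different boxes, so $N(\star_{i=1}^{3k-1}H,\alpha^{3k})\ge n^k$, and passing to the limit in the definition of $\ent(H,\alpha)$ gives $\ent(H,\alpha)\ge(\log n)/3$. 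Since $n$ may be taken arbitrarily large by refining $\alpha$, this forces $\ent(H)=\infty$.

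I do not expect a real obstacle here; the only items that need care are the factor $3$ in the denominator (which is the period of the loop and is why the entropy is not $\log n$ but $(\log n)/3$) and the appeal to minimality of $\alpha$ to produce the witnesses $w_i$. The same loop-based technique should generalize: whenever one adds a single point to a zero-entropy set in such a way that a cycle is created that passes through a coordinate value admitting a continuum of choices, the entropy jumps to $\infty$.
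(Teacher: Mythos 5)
Your proof is correct and follows essentially the same route as the paper's: both exploit the three-step cycle $1 \to y \to 0 \to 1$ created by adjoining $(0,1)$, count roughly $n^{k}$ boxes over $3k$ coordinates, and conclude $\ent(H,\alpha)\ge \tfrac{\log n}{3}$, which is unbounded as $\alpha$ is refined. The only cosmetic difference is that the paper first discretizes to the finite subsets $L_n=\{(i/n,0)\}\cup\{(1,i/n)\}\cup\{(0,1)\}$ and then chooses covers fine enough to separate the points $i/n$, whereas you fix an arbitrary minimal cover $\alpha$ and choose witness points adapted to it.
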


\begin{proof} That $H' \subset G$ is obvious. So let us prove that the entropy of $H$ is $\infty$. Suppose that $n$ is a positive integer. Let $L_n= \{(i/n,0):0\le i \le n \} \cup \{(1,i/n): 0 \le i \le n\} \cup \{(0,1)\}$. Then $L_n \subset H$ and $|L_n|=2n+2$, where  $|L_n|$ denotes the cardinality of this finite set.

\vphantom{}

Then $\mathbf{L}_n=\star_{i=1}^{\infty} L_n \subset \star_{i=1}^{\infty} H$ is a Cantor set. In fact, $\mathbf{L}_n=\{(s_0,s_1,\ldots): s_i\in \{j/n:0 \le j \le n\}; \text{ if } s_i=k/n, \text{ where } 0 < k < n, \text { then } s_{i+1} =0; \text{ if } s_i=1, \text{ then } s_{i+1} = k/n \text{ where } 0 \le k \le n , \text{ and if } s_i=0, \text{ then } s_{i+1}=0 \text{ or } s_{i+1}=1 \}$. Note that the point  $(0,1,i/n,0) \in \star_{j=1}^{3} L_n $ for $0 \le i \le n$. Let $A_n= \{ (0,1,k/n,0): 0 \le k \le n \} \subset \star_{j=1}^3 L_n$. Note that $N(A_n, \alpha^{3+1})=n+1$ (as long has $\alpha$ has sufficiently small intervals and is chosen so that no $\frac{k}{n}$ is in 2 intervals), and, in general, for $m>0$, $N(\star_{i=1}^m A_n, \alpha^{3m+1})=(n+1)^m \le N(\star_{j=1}^{3m} L_n, \alpha^{3m+1})$.

\vphantom{}

Then 

\begin{align*}
	\ent(\mathbf{L}_n,\alpha)&=\lim_{m \to \infty} \frac{\log(N(\star_{j=1}^m L_n,\alpha^{m+1})}{m} =\lim_{3m \to \infty} \frac{\log(N(\star_{j=1}^{3m} L_n,\alpha^{3m+1})}{3m} \\ 
	& \ge \lim_{3m \to \infty} \frac{\log(N(\star_{i=1}^m A_n,\alpha^{3m+1})}{3m} = \lim_{3m \to \infty} \frac{\log( n+1)^m}{3m} \\
	& =\lim_{3m \to \infty} \frac{m\log( n+1)}{3m}=\frac{\log(n+1)}{3}.
\end{align*}

Then for each $\alpha$ with sufficiently small and carefully chosen intervals, $\ent(L_n,\alpha)= \frac{\log(n+1)}{3}$, so $\ent(L_n)= \frac{\log(n+1)}{3}$. Since $\ent(H) \ge \ent(L_n) \ge \frac{\log(n+1)}{3}$ for each $n$, the result follows.

\end{proof}

\vphantom{}

\begin{lemma}

Suppose $0\le p<q \le1$. Let $H'^{(p,q)} = \{(x,p): p \le x \le q \} \cup \{(q,y): p \le y \le q \}$ and $H^{(p,q)}=H'^{(p,q)}\cup \{(p,q)\}$. Then $H'^{(p,q)} \subset G$ and  $H^{(p,q)}$ has $\infty$ entropy.

\end{lemma}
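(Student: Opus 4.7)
The plan is to mirror the proof of the previous lemma, with the corner point $(0,1)$ replaced by $(p,q)$ and the grid $\{k/n\}$ replaced by $\{p + k(q-p)/n\}$. The inclusion $H'^{(p,q)} \subset G$ is immediate: for $(x,p)$ with $p \le x \le q$ we have $x \ge p$, and for $(q,y)$ with $p \le y \le q$ we have $q \ge y$. The added corner $(p,q)$ lies above the diagonal (since $p < q$), so $H^{(p,q)} \not\subset G$, exactly as $(0,1)$ did for $H$ in the previous lemma.

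For the entropy lower bound, I would fix a positive integer $n$ and form the $(2n+2)$-element subset
$$
L_n^{(p,q)} = \left\{\left(p + \tfrac{k(q-p)}{n},\, p\right) : 0 \le k \le n\right\} \cup \left\{\left(q,\, p + \tfrac{k(q-p)}{n}\right) : 0 \le k \le n\right\} \cup \{(p,q)\}
$$
of $H^{(p,q)}$. The crucial observation, parallel to the $(0,1,k/n,0)$ construction of the previous proof, is that for each $k \in \{0,\ldots,n\}$ the $4$-tuple $(p,\, q,\, p + k(q-p)/n,\, p)$ lies in $\star_{j=1}^{3} L_n^{(p,q)}$, since each of the three consecutive pairs $(p,q)$, $(q,\, p+k(q-p)/n)$, $(p+k(q-p)/n,\, p)$ belongs to $L_n^{(p,q)}$. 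Denote the set of these $n+1$ 4-tuples by $A_n^{(p,q)}$. Because every element of $A_n^{(p,q)}$ begins and ends with $p$, the Mahavier product $\star_{i=1}^m A_n^{(p,q)}$ is nonempty and contains exactly the $(n+1)^m$ distinct concatenations indexed by $(k_1,\ldots,k_m) \in \{0,\ldots,n\}^m$; by construction these sit inside $\star_{j=1}^{3m} L_n^{(p,q)}$.

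I would then choose a minimal open cover $\alpha$ of $I_0$ by intervals fine enough that each of the $n+1$ values $p + k(q-p)/n$ lies in a unique member of $\alpha$. The $(n+1)^m$ tuples above then occupy $(n+1)^m$ distinct boxes of $\alpha^{3m+1}$ (since any two of them differ in one of the variable coordinates at positions $2, 5, \ldots, 3m-1$), giving
$$
N\!\left(\star_{j=1}^{3m} L_n^{(p,q)},\, \alpha^{3m+1}\right) \ge (n+1)^m.
$$
Hence
$$
\ent(L_n^{(p,q)}, \alpha) = \lim_{m \to \infty} \frac{\log N\!\left(\star_{j=1}^{3m} L_n^{(p,q)},\, \alpha^{3m+1}\right)}{3m} \ge \frac{\log(n+1)}{3}.
$$
Since $L_n^{(p,q)} \subset H^{(p,q)}$, Proposition \ref{ent subset}(c) gives $\ent(H^{(p,q)}) \ge \ent(L_n^{(p,q)}) \ge \log(n+1)/3$, and letting $n \to \infty$ yields $\ent(H^{(p,q)}) = \infty$. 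No genuinely new obstacle appears beyond the previous lemma; the only delicate bookkeeping is choosing $\alpha$ to separate the $n+1$ equally spaced values $p + k(q-p)/n$, handled exactly as before.
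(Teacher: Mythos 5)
Your proposal is correct and follows essentially the same route as the paper: the paper's proof likewise takes $L_n^{(p,q)}$ with grid spacing $\Delta t=(q-p)/n$ and reduces to the argument of the preceding lemma, obtaining $\ent(L_n^{(p,q)})=\log(n+1)/3$ and letting $n\to\infty$. Your write-up merely spells out the details (the $4$-tuples, the $(n+1)^m$ concatenations, and the box count) that the paper compresses into ``it follows, as above.''
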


\begin{proof} This is similar to the last result. We let, for each $n$, $L_n^{(p,q)}=\{(p+i \Delta t,p): 0 \le i \le n \} \cup \{(q,p+i\Delta t): 0 \le i \le n\} \cup \{(p,q)\},$ where $\Delta t=\frac{q- p}{n}$. Then it follows, as above, that $\ent(L_n^{(p,q)})=\frac{\log(n+1)}{3}$. Since $L_n^{(p,q)} \subset H^{(p,q)}$, the result follows.

\end{proof}

Then we have the following result:

\begin{theorem} 

Suppose $0\le p<q \le1$. Then $\ent(G \cup \{(p,q)\}) = \infty$, while  $\ent(G ) = 0$.

\end{theorem}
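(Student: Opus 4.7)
The plan is to observe that this theorem is essentially a direct corollary of the two preceding lemmas combined with the Triangle Example and the monotonicity of entropy under inclusion (Proposition \ref{ent subset}(c)). There is almost no new content to verify; the hard work was done in establishing the two lemmas above.

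First, I would note that $\ent(G) = 0$ has already been established in Example \ref{exam triangle}, the Triangle Example, so nothing remains to prove for that half of the statement. For the other half, I would verify that $H'^{(p,q)} \subset G$: a point $(x,p)$ with $p \le x \le q$ satisfies $x \ge p$, and a point $(q,y)$ with $p \le y \le q$ satisfies $q \ge y$, so in both cases the first coordinate is at least the second, placing the point in the triangle $G$. Therefore
\[
H^{(p,q)} = H'^{(p,q)} \cup \{(p,q)\} \subset G \cup \{(p,q)\}.
\]

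Next, I would apply the monotonicity part of Proposition \ref{ent subset}(c), which states that $\ent(K,\alpha) \le \ent(G',\alpha)$ whenever $K$ is a closed subset of the closed set $G'$, to conclude $\ent(H^{(p,q)}) \le \ent(G \cup \{(p,q)\})$. Combining this with the previous lemma, which gives $\ent(H^{(p,q)}) = \infty$, yields $\ent(G \cup \{(p,q)\}) = \infty$.

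There is no real obstacle here since all the technical work lives in the lemmas; the only things to be careful about are confirming the inclusion $H'^{(p,q)} \subset G$ and citing the correct monotonicity statement from Proposition \ref{ent subset}(c). I would finish by emphasizing the striking feature of the theorem: adding a single point outside $G$ (the point $(p,q)$ with $p < q$, which lies strictly above the diagonal) takes the entropy from $0$ to $\infty$, illustrating how discontinuous $\ent$ can be as a function of the closed set, in sharp contrast to the continuous single-valued setting.
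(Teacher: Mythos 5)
Your proposal is correct and matches the paper's intended argument exactly: the paper states the theorem immediately after the lemma on $H^{(p,q)}$ with no further proof, precisely because the inclusion $H^{(p,q)} \subset G \cup \{(p,q)\}$ together with monotonicity of $\ent$ (Proposition~\ref{ent subset}(c)) and the Triangle Example's computation $\ent(G)=0$ is all that is needed. Your explicit verification of $H'^{(p,q)} \subset G$ and the citation of the monotonicity statement supply exactly the details the paper leaves implicit.
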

\vphantom{}

 Suppose $X$ is a metric space, $f:X \to X$ is a map. 

\begin{enumerate}

\item A point $x$ in $X$ has \textit{ period} $n$ if $f^n(x)=x$. A point $x$ in $X$ has \textit{prime period} $n$ if $f^n(x)=x$, but $f^j(x) \ne x$ for $0<j<n$. The point $x$ is \textit{periodic}. 

\item $f$ is \textit{transitive} if for each pair $U,V$ of open sets in $X$, there is $n$ such that $f^n(U) \cap V \ne \emptyset$.

\end{enumerate}

\begin{theorem} 
Let $G \cup \{(0,1)\}=G^+$. Then $\mathbf{G}^+=\star_{i=1}^{\infty}G^+$ under the action of $\sigma$ has (1) a dense set of periodic points, (2) has periodic points of all periods (nontrivially), and (3) $\sigma$ is transitive. 

\end{theorem}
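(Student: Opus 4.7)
The key mechanism driving all three parts is the extra transition $(0,1)\in G^+\setminus G$: it allows a sequence in $\mathbf{G}^+$ to ``reset'' from $0$ back up to $1$ and then decrease again. The gluing observation I plan to use over and over is that for any $a,b\in I$, the three-step connector $(a,0,1,b)$ is admissible in $G^+$, since $(a,0)$ is in $G$ (as $a\ge 0$), the middle pair is the added point, and $(1,b)\in G$ (as $1\ge b$). This is the only feature of $G^+$ the argument uses.

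For part (2), I will exhibit a point of prime period $n$ for each $n\ge 1$. The constant sequence $\overline{0}=(0,0,\ldots)$ is a fixed point. For $n=2$, the point $(0,1,0,1,\ldots)$ has prime period $2$ since $(0,1),(1,0)\in G^+$. For $n\ge 3$, I pick any strictly decreasing finite sequence $1>a_1>a_2>\cdots>a_{n-2}>0$ and take the periodic orbit of the block $B=(0,1,a_1,a_2,\ldots,a_{n-2})$ of length $n$. Admissibility inside $B$ and across its seam follows from the gluing observation together with the monotone decrease. Prime period is exactly $n$: a divisor $d\mid n$ with $d<n$ would force $B_{d\bmod n}=B_0=0$, but $0$ appears in $B$ only at index $0$ since the $a_i$ are strictly positive.

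For part (1), given $x=(x_0,x_1,\ldots)\in\mathbf{G}^+$ and $\epsilon>0$, I will choose $N$ with $2^{-N}<\epsilon$ and take the periodic point $y$ whose fundamental block is $(x_0,x_1,\ldots,x_N,0,1)$ of length $N+3$. The internal transitions in the $x$-prefix inherit admissibility from $x\in\mathbf{G}^+$, and the three closing transitions $(x_N,0)$, $(0,1)$, $(1,x_0)$ are admissible by the gluing observation. Since $y$ agrees with $x$ in the first $N+1$ coordinates and the metric on $I^\infty$ weights coordinate $i$ by $2^{-i}$, we get $d(x,y)\le 2^{-N}<\epsilon$, and $y$ is periodic of period dividing $N+3$.

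For part (3), given nonempty open $U,V\subset\mathbf{G}^+$, I will pick $u\in U$, $v\in V$, and $\epsilon>0$ so that the open $\epsilon$-ball about $u$ in $\mathbf{G}^+$ lies in $U$; choose $N$ with $2^{-N}<\epsilon$; and form the point $x=(u_0,\ldots,u_N,0,1,v_0,v_1,v_2,\ldots)$. The gluing observation again places $x$ in $\mathbf{G}^+$, and $x$ is within $\epsilon$ of $u$, hence in $U$; by construction $\sigma^{N+3}(x)=v\in V$, so $\sigma^{N+3}(U)\cap V\ne\emptyset$. The only step that requires more than bookkeeping is the primality claim in part (2), and that reduces to the one-line combinatorial observation above; everything else is the single ``reset connector'' construction applied in three slightly different ways.
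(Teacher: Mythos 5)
Your proposal is correct and follows essentially the same route as the paper: the ``reset connector'' $(a,0,1,b)$ is exactly the device the paper uses, with density obtained by periodizing a long prefix of $x$ via $\oplus(0,1)$, transitivity by splicing a prefix of $u$ to $v$ through $(0,1)$, and all periods by closing up strictly decreasing finite blocks with the reset. The only (welcome) difference is that you make the prime-period verification explicit via the observation that $0$ occurs exactly once in the fundamental block, where the paper simply lists the first few cases and says ``and so on.''
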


\begin{proof} 

\begin{enumerate}

\item Suppose $x=(x_0,x_1,\ldots) \in \mathbf{G}^+$, and $x \in U=U_0 \times U_1 \times \cdots \times U_n \times I^{\infty}$, a basic open set. Let $z=(x_0,x_1, \ldots,x_n)$. Then $w=z \oplus (0,1) \oplus z \oplus (0,1) \oplus \cdots \in U \cap \mathbf{G}^+$, and $w$ is periodic. Hence the set of periodic points of $\mathbf{G}^+$ is dense in $\mathbf{G}^+$.

\item $\Delta \subset \mathbf{G}^+$, so $\mathbf{G}^+$ has fixed points (period one points). The point $(0,1,0,1,0,1,\ldots) \in\mathbf{G}^+$ has period 2. If $0<x<1$, then $(x,0,1,x,0,1,\ldots) \in \mathbf{G}^+$ has prime period 3. If $0<x<y<1$, then the point $(y,x,0,1,y,x,0,1,\ldots.) \in \mathbf{G}^+$ has prime period 4. And, so on. Hence $\mathbf{G}^+$ has prime period $n$ points for each positive integer $n$.

\item Suppose $U,V$ are open sets in $\mathbf{G}^+$. Without loss of generality, we can assume that $U,V$ are basic open sets. So, let $U= \mathbf{G}^+ \cap (U_0\times U_1 \times \cdots \times U_n \times I^{\infty})$ and let $V= \mathbf{G}^+ \cap (V_0\times V_1 \times \cdots \times V_m \times I^{\infty})$. Suppose $x=(x_0,x_1,\ldots) \in U$ and $y=(y_0,y_1,\ldots) \in V$. Then consider the point $z=(x_0,x_1,\ldots x_n) \oplus (0,1) \oplus y$. The point $z \in \mathbf{G}^+$, $z \in U$, and $\sigma^{n+3}(z)=y$. Thus, $\sigma^{n+3}(U) \cap V \ne \emptyset$. Hence $\sigma$ is transitive.

\end{enumerate}

\end{proof}

\begin{corollary}

The map $\sigma$ on $\mathbf{G}^+$ has sensitive dependence on initial conditions and  is chaotic in the sense of Devaney. 

\end{corollary}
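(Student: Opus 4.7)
The plan is to observe that the corollary follows almost immediately from the preceding theorem together with the well-known theorem of Banks, Brooks, Cairns, Davis, and Stacey, which says that any continuous self-map of an infinite compact metric space that is topologically transitive and has a dense set of periodic points automatically has sensitive dependence on initial conditions. Since Devaney's definition of chaos comprises exactly these three ingredients (transitivity, dense periodic points, and SDIC), once we have SDIC the chaos conclusion is immediate.

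First I would check that the hypotheses of the Banks et al.\ theorem are satisfied. The space $\mathbf{G}^+ = \star_{i=1}^\infty G^+$ is a closed subset of the compact metric space $I^\infty$ (with the metric already fixed in Section 2), hence is itself compact metric. The shift $\sigma$ is the restriction of the continuous shift on $I^\infty$ to the $\sigma$-invariant subset $\mathbf{G}^+$, so $\sigma\colon \mathbf{G}^+\to\mathbf{G}^+$ is continuous. The preceding theorem gives parts (1) and (3) of Devaney's definition directly: a dense set of periodic points and topological transitivity.

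Next I need that $\mathbf{G}^+$ is infinite, so that the Banks et al.\ theorem applies nontrivially. This is clear because $\mathbf{G}^+$ contains the diagonal $\Delta = \{\overline{x} : 0 \le x \le 1\}$ (since $(x,x)\in G$ for every $x\in I$), and $\Delta$ is uncountable. Alternatively, the previous theorem exhibits periodic points of every prime period, which alone forces $\mathbf{G}^+$ to be infinite.

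With the three prerequisites in hand, the Banks--Brooks--Cairns--Davis--Stacey theorem delivers sensitive dependence on initial conditions for $\sigma$ on $\mathbf{G}^+$, and combining this with transitivity and the density of periodic points yields chaos in the sense of Devaney. There is no real obstacle here; the only thing one might want to double-check is that the notion of SDIC obtained from the Banks et al.\ theorem matches the definition of SDIC used earlier in this paper, which it does verbatim (some universal $\delta>0$ such that every point has, in every neighborhood, a point whose $\sigma$-orbit eventually separates from its own by at least $\delta$).
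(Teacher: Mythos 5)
Your proposal is correct and follows exactly the paper's own argument: both invoke the Banks--Brooks--Cairns--Davis--Stacey theorem to deduce SDIC from the transitivity and dense periodic points established in the preceding theorem, and then conclude Devaney chaos. The only difference is that you explicitly verify the hypotheses (compactness, continuity of $\sigma$, and that $\mathbf{G}^+$ is infinite), which the paper leaves implicit.
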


\begin{proof} That $\sigma$ has sensitive dependence on initial conditions follows from \cite{BBCDS}. Devaney's definition of chaos requires (1) transitivity, (2) dense set of periodic points, and (3) sensitive dependence on initial conditions. (The paper cited above shows that SDIC follows from transitivity and dense set of periodic points.)

\end{proof}

\textbf{Notation.} Suppose $0<p<q<1$. Let $T^{(p,q)}=\{ (x,y): q \ge x \ge y \ge p \} \cup \{(p,q)\}$. Let $\mathbf{T}^{(p,q)}= \star_{i=1}^{\infty}T^{(p,q)}$.

\begin{lemma}  Then $\sigma(\mathbf{T}^{(p,q)})= \mathbf{T}^{(p,q)}$ and $\sigma$ on  $\mathbf{T}^{(p,q)}$   (1) admits a  dense set of periodic points, (2) admits periodic points of all periods (nontrivially), and (3) is transitive. Hence, $\sigma$ on  $\mathbf{T}^{(p,q)}$ is chaotic in the sense of Devaney. Furthermore, $\sigma|\mathbf{T}^{(p,q)}$ has entopy $\infty$.

\end{lemma}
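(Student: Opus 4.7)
My plan is to follow the template of the previous theorem about $\mathbf{G}^+$, with the extra point $(p,q)$ playing the role that $(0,1)$ played there. The key structural fact that I will exploit throughout is that every entry of a sequence in $\mathbf{T}^{(p,q)}$ lies in $[p,q]$ and the only admissible transition which goes \emph{up} is the extra point $(p,q)$; once that transition is taken, the next coordinate can be any element of $[p,q]$ because $(q,t)\in T^{(p,q)}$ for every $t\in[p,q]$, and similarly $(t,p)\in T^{(p,q)}$ for such $t$. These two link-up observations are the only facts I will need.

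For the invariance $\sigma(\mathbf{T}^{(p,q)})=\mathbf{T}^{(p,q)}$, I would invoke the earlier proposition giving $\sigma(\mathbf{T}^{(p,q)})\subset\mathbf{T}^{(p,q)}$, and note that $(q,x_0,x_1,\ldots)\in\mathbf{T}^{(p,q)}$ whenever $(x_0,x_1,\ldots)$ is, which supplies the reverse inclusion. For dense periodic points, given $x=(x_0,x_1,\ldots)\in\mathbf{T}^{(p,q)}$ and a basic neighborhood $U=(U_0\times\cdots\times U_n)\times I^{\infty}$ of $x$, I would form
$$ w=(x_0,\ldots,x_n,p,q)\oplus(x_0,\ldots,x_n,p,q)\oplus\cdots;$$
the only new link-ups are $(x_n,p)$, $(p,q)$ and $(q,x_0)$, each of which lies in $T^{(p,q)}$, so $w\in U\cap\mathbf{T}^{(p,q)}$ and $\sigma^{n+3}(w)=w$. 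To realize each prime period $n$ nontrivially, I would use a constant sequence $(c,c,\ldots)$ with $c\in[p,q]$ for $n=1$, the cycle $(p,q,p,q,\ldots)$ for $n=2$, and for $n\ge 3$ choose distinct values $y_1>y_2>\cdots>y_{n-2}$ in the open interval $(p,q)$ and let the fundamental block be $(y_1,\ldots,y_{n-2},p,q)$; the $n$ entries are pairwise distinct, forcing the minimal period to be exactly $n$. Transitivity is handled in the same style: given basic open $U,V$ with $x\in U$ and $y\in V$, the splice $z=(x_0,\ldots,x_n,p,q)\oplus y$ lies in $U\cap\mathbf{T}^{(p,q)}$ and satisfies $\sigma^{n+3}(z)=y\in V$. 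Combining these three properties with the Banks et al.\ result \cite{BBCDS} cited earlier immediately yields Devaney chaos.

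For the entropy assertion, I would appeal to the preceding lemma, which already establishes that the set $H^{(p,q)}=\{(x,p):p\le x\le q\}\cup\{(q,y):p\le y\le q\}\cup\{(p,q)\}$ has infinite entropy. Direct inspection of the defining inequalities shows $H^{(p,q)}\subset T^{(p,q)}$, so Proposition \ref{ent subset}(c) forces $\ent(T^{(p,q)})=\infty$. Since $\sigma(\mathbf{T}^{(p,q)})=\mathbf{T}^{(p,q)}$ has just been verified, Theorem \ref{ent and sigma} converts this into $h(\sigma|\mathbf{T}^{(p,q)})=\infty$.

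The main obstacle is essentially bookkeeping: at each splicing step one must check that the inserted pairs $(x_n,p)$, $(p,q)$, $(q,x_0)$ and $(q,y_0)$ all respect the admissibility relation of $T^{(p,q)}$, which reduces to the elementary observations in the first paragraph. The only other subtlety is the prime-period count, but the distinctness of the chosen $y_i$ together with the forced positions of $p$ and $q$ inside the fundamental block prevents any proper divisor of $n$ from being a period.
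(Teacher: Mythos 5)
Your proposal is correct and is essentially the paper's own proof: the paper simply says to reuse the $\mathbf{G}^+$ arguments (dense periodic points via splicing in the block $(p,q)$, prime periods via distinct decreasing values, transitivity via the same splice) together with the preceding lemma on $H^{(p,q)}$, and your write-up fleshes out exactly those adjustments, including the needed link-up checks $(t,p),(q,t)\in T^{(p,q)}$ and the inclusion $H^{(p,q)}\subset T^{(p,q)}$ for the entropy claim.
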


\begin{proof} Using the same (slightly adjusted) arguments as those for Theorem 1, Theorem 2, and Corollary 1 shows this.

\end{proof}

Let $\mathbf{W}^{(p,q)}:=\star_{i=1}^{\infty} (G \cup \{(p,q)\})$, which contains $\mathbf{T}^{(p,q)}$. Under $\sigma$, $\mathbf{T}^{(p,q)}$ is invariant and closed in $\mathbf{W}^{(p,q)}$. But do all points of $\mathbf{W}^{(p,q)} \setminus (\mathbf{T}^{(p,q)} \cup \Delta)$ wander?

\vphantom{}

\begin{enumerate}

\item Suppose $x$ is in $\mathbf{W}^{(p,q)},$ $x$ is eventually in $\mathbf{T}^{(p,q)}$, but is not in $\mathbf{T}^{(p,q)}$. Then there is least $N \ge 0$ and $z=(z_0,z_1,\ldots) \in \mathbf{T}^{(p,q)}$ such that $x=(x_0,\ldots,x_N) \oplus z$, and $x_i>q$, $0 \le i \le N$. Suppose $x_N>a>q$. Then $\mathbf{x} \in V_a=\mathbf{W}^{(p,q)} \cap (I^{N} \times (a,1] \times [0,a) \times I^{\infty})$. Hence, $x_N \in (a,1]$ and $z_0=x_{N+1} \in [0,a)$. 

\vphantom{}

If $y=(y_0,y_1,\ldots) \in \sigma^{-1}(V_a)=\mathbf{W}^{(p,q)} \cap (I^{N+1} \times (a,1] \times [0,a) \times I^{\infty})$, then $y \notin V_a$, because $y_i > q$ for $0 \le i \le N+1$, and $y_{N+1} \notin [0,a)$. Likewise, $\sigma^{-m}(V_a) \cap V_a = \emptyset$ for $m>0$. 

\vphantom{}

If $y=(y_0,y_1,\ldots) \in \sigma(V_a)=\mathbf{W}^{(p,q)} \cap (I^{N-1} \times (a,1] \times [0,a) \times I^{\infty})$ (provided $N>0$), then $y \notin V_a$, because $y_i \ge q$ for $0 \le i \le N-1$, and $y_{N} \notin (a,1])$. Likewise, $\sigma^{m}(V_a) \cap V_a = \emptyset$ for $N+1 > m>0$. If $m\ge N+1$, then $\sigma^{m}(V_a) \cap V_a = \emptyset$ since any point $z=(z_0,z_1,\ldots)$ of $\sigma^{m}(V_a)$ has each coordinate less than or equal to $q <a$ and so cannot be in $V_a$. Hence, each such $x$ is wandering.

\vphantom{}

\item Suppose $x$ is eventually  fixed but is not eventually in $\mathbf{T}^{(p,q)}$. Then either there is some $z>q$ or some $z <p$ such that for some $m>0$, $\sigma^m(x)=\overline{z}$. If there is $z >q$ such that for some $m>0$, $\sigma^m(x)=\overline{z}$, then each coordinate of $x$ is greater than $q$, and $x$ is wandering (for the same reason as it was wandering in $\mathbf{G}$).

\vphantom{}

If there is $z <p$ such that for some $m>0$, $\sigma^m(x)=\overline{z}$, then  there is some greatest $N$ such that $x_N>z$. Let $x_N > a >z$. Then $x \in $ $U_a=\mathbf{W}^{(p,q)} \cap ( (a,1]^{N+1} \times [0,a) \times I^{\infty})$. Again, for each $m$, $U_a \cap \sigma^m(U_a) = \emptyset$, so $x$ is wandering.

\vphantom{}

\item Suppose $x$ is neither eventually  fixed nor eventually in $\mathbf{T}^{(p,q)}$. Then either each $x_i >q$, or eventually $x_i<p$. If each $x_i >q$, then there is least $n$ such that $x_n>x_{n+1}$ (since $x$ is not eventually fixed). Then there is $a$ such that $x_n > a > x_{n+1}$. If $U_a=\mathbf{W}^{(p,q)} \cap ((a,1]^{n+1} \times [0,a) \times I^{\infty}$, then $U_a$ is wandering and contains $x$, so again we have a wandering point.

\vphantom{}

Suppose that eventually $x_i <p$. Then there is least $n$ such that $x_n <p$ and $x_n>x_{n+1}$ (again, $x$ is not eventually fixed). Then if $x_n > a > x_{n+1}$, and $U_a=\mathbf{W}^{(p,q)} \cap ((a,1]^{n+1} \times [0,a) \times I^{\infty}$, then $U_a$ is wandering and contains $x$, so such a point is wandering.

\end{enumerate}

\vphantom{}

\begin{prop}
Let $\mathbf{W}^{(p,q)}:=\star_{i=1}^{\infty} (G \cup \{(p,q)\})$. Then $\mathbf{W}^{(p,q)}$ contains $\mathbf{T}^{(p,q)}$ and contains $\Delta$. Under $\sigma$, $\mathbf{T}^{(p,q)}$ is invariant and closed in $\mathbf{W}^{(p,q)}$, as is $\Delta$. Furthermore,  all points of $\mathbf{W}^{(p,q)} \setminus (\mathbf{T}^{(p,q)} \cup \Delta)$ wander.

\end{prop}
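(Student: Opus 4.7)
The plan is to address the four assertions in order. The containments $\mathbf{T}^{(p,q)}\subseteq\mathbf{W}^{(p,q)}$ and $\Delta\subseteq\mathbf{W}^{(p,q)}$ are immediate, since $T^{(p,q)}\subseteq G\cup\{(p,q)\}$ and the diagonal of $I^2$ lies in $G$, and since Mahavier products respect inclusions of factors. Closedness of $\mathbf{T}^{(p,q)}$ in $I^\infty$ is the observation recorded in the Mahavier-products background that Mahavier products of closed factors are closed; closedness of $\Delta$ is elementary. Invariance follows because shifting any Mahavier-product sequence preserves the consecutive-pair constraint, giving $\sigma(\mathbf{T}^{(p,q)})\subseteq\mathbf{T}^{(p,q)}$, while $\sigma$ fixes every $\overline{z}\in\Delta$ pointwise.

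The substantive content is the final clause. The approach is to exhibit, for each $x=(x_0,x_1,\ldots)\in\mathbf{W}^{(p,q)}\setminus(\mathbf{T}^{(p,q)}\cup\Delta)$, a simple wandering open neighborhood of $x$ of the form
\[
V_a^{(k)} \;:=\; \mathbf{W}^{(p,q)}\cap\bigl(I^k\times(a,1]\times[0,a)\times I^\infty\bigr),
\]
with $k$ and $a$ depending on $x$. The guiding structural fact is that any sequence in $\mathbf{W}^{(p,q)}$ is weakly decreasing except at \emph{jump} positions where a consecutive pair equals $(p,q)$, and after such a jump the value equals $q$ and the tail is again confined to $[0,q]$. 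Consequently, once a coordinate strictly exceeds $q$ all later coordinates are at most $q$; and once a coordinate strictly drops below $p$ no jump is available, so subsequent coordinates are weakly decreasing and remain below $p$.

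I would split into three cases, following the sketch immediately preceding the proposition. First, if $x$ is eventually in $\mathbf{T}^{(p,q)}$ but not already in it, choose $N$ minimal with $x_{N+1}\le q<x_N$ and any $a\in(q,x_N)$, and take the neighborhood $V_a^{(N)}$. Second, if $x$ is eventually constant at some $z\notin[p,q]$, the hypothesis $x\notin\Delta$ provides a strict descent $x_N>x_{N+1}$, and one picks $a\in(x_{N+1},x_N)$ still lying outside $[p,q]$ on the appropriate side. Third, if $x$ is neither eventually constant nor eventually in $\mathbf{T}^{(p,q)}$, then the structural fact forces either all coordinates to exceed $q$ (with a strict descent somewhere, as $x\notin\Delta$) or coordinates to eventually fall below $p$ (again with a strict descent), reducing to the same construction.

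The main obstacle is verifying uniformly that each such $V_a^{(N)}$ with $a\notin[p,q]$ is simple wandering, i.e.\ that $V_a^{(N)}\cap\sigma^m(V_a^{(N)})=\emptyset$ for every nonzero integer $m$. I would isolate this as an auxiliary lemma and prove it by the monotonicity-plus-jump analysis above: a point $z\in V_a^{(N)}$ with $a>q$ satisfies $z_N>a>q\ge z_{N+j}$ for all $j\ge 1$, blocking forward overlap; backward overlap is ruled out because placing the pattern $(>a,<a)$ at an earlier adjacent slot-pair would force, via weak monotonicity and the fact that $a>q$, a coordinate below $a$ preceding $z_N$, contradicting $z_N>a$ being reached from above. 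The case $a<p$ is symmetric, using the analogous no-jump-available observation. With this lemma in hand, the three cases combine to give the final clause.
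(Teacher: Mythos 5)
Your proof follows the paper's own argument almost exactly: the paper establishes the final clause by the same three-case split (eventually in $\mathbf{T}^{(p,q)}$ but not in it; eventually fixed at some $\overline{z}$ with $z\notin[p,q]$; neither eventually fixed nor eventually in $\mathbf{T}^{(p,q)}$), in each case exhibiting a simple wandering neighborhood of the form $\mathbf{W}^{(p,q)}\cap(I^{k}\times(a,1]\times[0,a)\times I^{\infty})$ or $\mathbf{W}^{(p,q)}\cap((a,1]^{k+1}\times[0,a)\times I^{\infty})$ with $a\notin[p,q]$, so your uniform auxiliary lemma is only a mild streamlining of what the paper does case by case. One correction is needed: your ``structural fact'' that once a coordinate strictly exceeds $q$ all later coordinates are at most $q$ is false (consider $\overline{1}$, or $(0.9,0.8,0.7,\ldots)$ with $q=\tfrac12$), and consequently the displayed chain $z_N>a>q\ge z_{N+j}$ is not justified, since $z_{N+1}$ need only be below $a$, not below $q$. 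What is true, and all your lemma needs, is that once a coordinate is less than $a$ for some $a>q$, every later coordinate stays below $a$ --- each step either weakly decreases or jumps to $q<a$ --- and with that substitution the forward-overlap argument, and hence the whole proof, goes through.
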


\begin{theorem} Suppose $D$ is a closed region in $[0,1] \times [0,1]$ with nonempty connected interior and such that if $D^{\circ}$  is the interior of $D$, then $D= \overline{D^{\circ}}$. Then either $\ent(D)=0$ or $\ent(D)=\infty$. 
\end{theorem}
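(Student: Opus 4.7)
The plan is to dichotomize based on whether the interior $D^{\circ}$ meets the diagonal $\Delta_{0}=\{(x,x):x\in[0,1]\}$; in the first case the entropy is forced to be $0$ and in the second case it is forced to be $\infty$.

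Suppose first that $D^{\circ}\cap\Delta_{0}=\emptyset$. I would note that $[0,1]^{2}\setminus\Delta_{0}$ has exactly two connected components, namely $\{x>y\}$ and $\{x<y\}$, so since $D^{\circ}$ is connected, open, and disjoint from $\Delta_{0}$, it sits in just one of them. Say $D^{\circ}\subset\{x>y\}$; then $D=\overline{D^{\circ}}\subset\{x\ge y\}$, which is the set from the Triangle Example \ref{exam triangle}. Proposition \ref{ent subset}(c) then gives $\ent(D)\le\ent(\{x\ge y\})=0$. The other subcase $D^{\circ}\subset\{x<y\}$ reduces to the same situation via Theorem \ref{G inverse ent}, since then $D^{-1}\subset\{x\ge y\}$ and $\ent(D)=\ent(D^{-1})=0$.

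Next suppose $D^{\circ}\cap\Delta_{0}\ne\emptyset$, and pick $(x_{0},x_{0})\in D^{\circ}$. Openness of $D^{\circ}$ yields $\delta>0$ with $[a,b]^{2}\subset D$, where $[a,b]:=[x_{0}-\delta,x_{0}+\delta]$ (shrinking $\delta$ to keep $[a,b]\subset[0,1]$). Because every consecutive pair drawn from $[a,b]$ lies in $[a,b]^{2}\subset D$, I get $[a,b]^{m+1}\subset\star_{i=1}^{m}D$ for every $m\ge 1$. My goal is then to show that $\ent(D,\alpha)\ge\log k$ for arbitrarily large $k$ and a suitably chosen minimal open cover $\alpha$ of $I_{0}$. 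I would choose $\alpha=\{\alpha_{1},\ldots,\alpha_{n}\}$ fine enough that at least $k$ of its members are contained in $(a,b)$, say $\alpha_{i_{1}},\ldots,\alpha_{i_{k}}$. Minimality of $\alpha$ provides, for each $\alpha_{i_{j}}$, a witness point $p_{j}\in\alpha_{i_{j}}$ that lies in no other element of $\alpha$. Then the $k^{m+1}$ tuples $(p_{j_{0}},p_{j_{1}},\ldots,p_{j_{m}})$ with $j_{l}\in\{1,\ldots,k\}$ all lie in $[a,b]^{m+1}\subset\star_{i=1}^{m}D$, each such tuple belongs to exactly one box $\alpha_{i_{j_{0}}}\times\cdots\times\alpha_{i_{j_{m}}}$ of $\alpha^{m+1}$, and distinct tuples give distinct boxes. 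Any subcover of $\star_{i=1}^{m}D$ in $\alpha^{m+1}$ is therefore forced to contain all $k^{m+1}$ of these boxes, so $N(\star_{i=1}^{m}D,\alpha^{m+1})\ge k^{m+1}$ and $\ent(D,\alpha)\ge\log k$. Letting $k\to\infty$ gives $\ent(D)=\infty$.

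The main obstacle I anticipate is the combinatorial lower bound in the second case. A priori it looks as if subcovers from $\alpha^{m+1}$ could be trimmed very efficiently by exploiting the overlaps between the $\alpha_{i}$, so one must exhibit a concrete set of points in $\star_{i=1}^{m}D$ that cannot be shared between boxes; extracting the witness points $p_{j}$ from minimality of $\alpha$ is precisely what converts the geometric fact that $D$ contains a diagonal-straddling square into this combinatorial statement.
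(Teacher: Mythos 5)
Your proof is correct, and its overall skeleton --- dichotomizing on whether $D^{\circ}$ meets the diagonal, using connectedness of $D^{\circ}$ and $D=\overline{D^{\circ}}$ to place $D$ in one closed half-triangle in the first case --- is exactly the paper's. Where you diverge is in the infinite-entropy case: the paper extracts from $D$ the set $T^{(p,q)}=\{(x,y): q\ge x\ge y\ge p\}\cup\{(p,q)\}$ and appeals to the previously established lemma that $\ent(T^{(p,q)})=\infty$, which in turn rests on the L-shaped sets $H^{(p,q)}$ and their finite approximations $L_n^{(p,q)}$; you instead extract the full square $[a,b]^2\subset D^{\circ}$ straddling the diagonal and rerun, in miniature, the computation of Example \ref{exam square} showing a square has infinite entropy. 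Your route is more self-contained and arguably cleaner, since it bypasses the $T^{(p,q)}$ machinery entirely; it also has the merit of actually justifying the lower bound $N(\star_{i=1}^{m}D,\alpha^{m+1})\ge k^{m+1}$ via the witness points supplied by minimality of $\alpha$ --- a step the paper silently asserts when it writes $N(\star_{i=1}^{m}I^2,\alpha^{m+1})=n^{m+1}$ in Example \ref{exam square}. The only things worth making explicit are that minimal open covers of $[0,1]$ by intervals with arbitrarily many members inside $(a,b)$ do exist (so that $k$ can indeed be taken arbitrarily large), and that $\star_{i=1}^{\infty}D\ne\emptyset$ in the second case (immediate, since $(x_0,x_0,x_0,\ldots)$ lies in it), so that $\ent(D,\alpha)$ is given by the limit formula rather than declared zero by convention.
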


\begin{proof} Either $D \subset G$, or $D \subset (\overline{[0,1] ^2 \setminus G})$, or $D^{\circ}$ intersects the diagonal $\{(x,x): 0 \le x \le 1 \}$. If $D \subset G$, or $D \subset (\overline{[0,1] ^2 \setminus G})$, then $\ent(D)=0$. Otherwise, $D^{\circ}$ intersects the diagonal, and there is some $0 \le p < q \le 1$ such that  $T^{(p,q)} \subset D$, so $\ent(D)=\infty$.

\end{proof}

In the following figure we give an example of such a set $D$ such that $D^{\circ}$ is not connected and the previous theorem doesn't hold. Hence, we need the assumption that the interior $D^{\circ}$ of $D$ is connected.

\vphantom{}

\begin{figure}[h!]
	\centering
\begin{tikzpicture}[scale=0.7]
\begin{axis}[
unit vector ratio*=1 1 1, 
%xlabel=$I_0$, 
xtick={0,0.5,1}, xticklabels={0,,1},
%ylabel=\rotatebox{-90}{$I_1$},
 ytick={0,0.5,1}, yticklabels={0,,1},
xmin=0,     
xmax=1,   
ymin=0,     
ymax=1,
] 

\node[mark size=4pt,color=blue!50!white] at (axis cs: 0.6,0.33) {\pgfuseplotmark{square*}};

\draw[mark=square*,mark size=22pt,mark options={color=blue!50!white}] plot coordinates {(axis cs: 0.62,0.35)};
\draw[mark=square*,mark size=22pt,mark options={color=blue!50!white}] plot coordinates {(axis cs: 0.35,0.62)};
\addplot [dashed,black] {x};
\end{axis}
\end{tikzpicture}
\caption{ }
\end{figure}

We are finally finished with the Triangle Example family. We turn not to an entirely different set of conditions, conditions often satisfied quite naturally, and consider the consequences for entropy.

\vphantom{}

Suppose $H$ is a closed subset of $[0,1]^{N+1}$ and $H$ contains $k$ mutually disjoint closed sets $H_1,\ldots,H_k$ such that $\pi_0(H_i)=[0,1]$ for each $1 \le i \le k$. Then if $s=s_1,s_2,\ldots$ is a sequence each member of which is in $\{1,\ldots,k\}$, $\mathbf{H}_s:=H_{s_1}\star H_{s_2} \star \cdots$ is a nonempty closed subset of $[0,1]^{\infty}$ and $\pi_0(\mathbf{H}_s)=[0,1]$. Furthermore, if $s$ and $t$ are different sequences of members of $\{1,\ldots,k\}$, then $\mathbf{H}_s \cap \mathbf{H}_t = \emptyset$.

\vphantom{}

To see that each $\mathbf{H}_s\neq \emptyset $ and $\pi_0(\mathbf{H}_s)=[0,1],$ let $x\in [0,1].$ Then there is $(x_0,\ldots,x_N) \in H_{s_1}$ such that $x_0=\pi_0 \left( (x_0,\ldots,x_N) \right)=x.$ But then $x_N\in [0,1] = \pi_0 (H_{s_2}),$ so there is some $(x_N,\ldots,x_{2N})\in H_{s_2}.$ We can continue this process, and find a point $\mathbf{x}=(x,x_1,\ldots,x_N,\ldots,x_{2N},\ldots)=(x_0,x_1,\ldots) \in \mathbf{H}_s.$ 
Furthermore if $s=(s_1,s_2\ldots), t=(t_1, t_2\ldots)$ are different sequences, each member of which is in $\{ 1,\ldots,k \},$ then there is least $i$ such that $s_i\neq t_i.$ Then $\mathbf{x}=(x_0,x_1,\ldots)\in \mathbf{H}_s$ means that $\pi_{ \langle (i-1)N,\ldots,iN \rangle}(\mathbf{x})=(x_{(i-1)N},\ldots,x_{iN}) \in H_{s_i}$ and $\pi_{ \langle (i-1)N,\ldots,iN \rangle}(\mathbf{x})=(x_{(i-1)N},\ldots,x_{iN}) \notin H_{t_i}$ since $H_{s_i} \cap H_{t_i}.$

\vphantom{}

 Let $S$ denote the set of all sequences each member of which is in $\{1,\ldots k\}$ and $\mathcal{H}=\{\mathbf{H}_s: s \in S\}$. ($S$ is a Cantor set and $\cup \mathcal{H}=\mathbf{H}$ is closed in $[0,1]^{\infty}$.) 

\vphantom{}

Then we have the following result:

\begin{theorem} The entropy of $H$ is at least $\log(k)$, i.e., $\ent(H) \ge \log(k)$.

\end{theorem}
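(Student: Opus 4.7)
The plan is to produce, for a suitably fine minimal cover $\alpha$ of $I_0$, the lower bound $N(\star_{i=1}^m H,\alpha^{mN+1})\ge k^m$ directly from the $k^m$ distinct Mahavier products built out of the $H_i$. Since $H_1,\ldots,H_k$ are pairwise disjoint compact subsets of $[0,1]^{N+1}$, the number $2\delta := \min_{i\ne j}\mathrm{dist}(H_i,H_j)$ is strictly positive. I will fix $\alpha=\{\alpha_1,\ldots,\alpha_n\}$ a minimal open cover of $I_0$ by open intervals each of length less than $\delta$ (in the sup metric on any power $[0,1]^{L+1}$, every basic box built from $\alpha$ then has diameter less than $\delta$). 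For each finite sequence $s=(s_1,\ldots,s_m)\in\{1,\ldots,k\}^m$ I set $\mathbf{H}_s^{(m)} := H_{s_1}\star H_{s_2}\star\cdots\star H_{s_m}$; the discussion immediately preceding the theorem shows these $k^m$ sets are nonempty subsets of $\star_{i=1}^m H$, and are pairwise disjoint.

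The key step is the separation claim: any single box $B=\prod_{\ell=0}^{mN}\alpha_{i_\ell}$ in $\alpha^{mN+1}$ meets at most one of the sets $\mathbf{H}_s^{(m)}$. If some $B$ met both $\mathbf{H}_s^{(m)}$ and $\mathbf{H}_t^{(m)}$ with $s\ne t$, let $j$ be the smallest index with $s_j\ne t_j$, and pick witnesses $p\in B\cap\mathbf{H}_s^{(m)}$ and $q\in B\cap\mathbf{H}_t^{(m)}$. The projections $\pi_{\langle (j-1)N,jN\rangle}(p)$ and $\pi_{\langle (j-1)N,jN\rangle}(q)$ lie in $H_{s_j}$ and $H_{t_j}$ respectively, and both lie inside the $(N+1)$-dimensional subbox $\prod_{\ell=(j-1)N}^{jN}\alpha_{i_\ell}$, which has diameter less than $\delta$. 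This contradicts $\mathrm{dist}(H_{s_j},H_{t_j})\ge 2\delta$.

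Given the separation claim, for each $s$ I may choose a point $p_s\in\mathbf{H}_s^{(m)}\subset\star_{i=1}^m H$, and any subcover of $\star_{i=1}^m H$ drawn from $\alpha^{mN+1}$ must contain a box $B_s\ni p_s$; by separation $B_s\ne B_t$ for $s\ne t$, so the subcover has at least $k^m$ members. Therefore
$$
\frac{\log N(\star_{i=1}^m H,\alpha^{mN+1})}{m}\ \ge\ \log k
$$
for every $m$, and passing to the limit yields $\ent(H,\alpha)\ge\log k$. Taking the supremum over minimal open interval covers of $I_0$ delivers $\ent(H)\ge\log k$.

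The only real obstacle is the separation claim; it rests entirely on choosing $\alpha$ fine enough relative to $\delta$, which is possible because minimal open interval covers of $[0,1]$ can be chosen with arbitrarily small mesh. The projection hypothesis $\pi_0(H_i)=[0,1]$ is used only once, namely to guarantee that every $\mathbf{H}_s^{(m)}$ is nonempty (exactly the argument sketched just before the theorem statement), so no additional work is required there.
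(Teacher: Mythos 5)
Your proof is correct and follows essentially the same route as the paper: both obtain the lower bound by exhibiting the $k^m$ pairwise disjoint products $H_{s_1}\star\cdots\star H_{s_m}$ inside $\star_{i=1}^m H$ and arguing that a sufficiently fine grid cover must use distinct boxes for distinct products. In fact you make explicit the one step the paper leaves implicit (the mesh-versus-$\min_{i\ne j}\mathrm{dist}(H_i,H_j)$ separation argument showing no box can meet two of the products), while omitting only the harmless extra factor of $n$ that the paper extracts from $\pi_0(H_i)=[0,1]$ and which disappears in the limit anyway.
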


\begin{proof} Since $\pi_0(H_i)=[0,1]$, $N(H_i, \alpha^{N+1}) \ge n$ for each $i$, and $N(H,\alpha^{N+1}) \ge kn$ for $\alpha$ with sufficiently small intervals. For each $m \ge 1$, there are $k^m$ sets $H_{s_1}\star H_{s_2} \star \cdots \star H_{s_m}$, with this collection being mutually disjoint. If $L_m=\{H_{s_1}\star H_{s_2} \star \cdots \star H_{s_m}: s_i \in \{1,..,k\} \text{ for each } 1 \le i \le m \}$, then $N(L_m,\alpha^{mN+1}) \ge k^m n$, and 

$$ \lim_{m \to \infty} \frac{\log(N(L_m, \alpha^{mN+1}))}{m}\ge \lim_{m \to \infty} \frac{\log(k^m n)}{m}= \lim_{m \to \infty} \frac{m\log(k)+\log(n)}{m}=\log(k).$$

\vphantom{}

Since $$\lim_{m \to \infty} \frac{\log(N(L_m, \alpha^{mN+1}))}{m} \le \lim_{m \to \infty} \frac{\log(N(H, \alpha^{mN+1}))}{m},$$ the result follows.

\end{proof}

\section{Entropy versus box counting dimension}

\vphantom{}

Superficially at least, defining entropy with box covers appears to be similar to the way the box counting dimension of a set is defined. For a minimal open cover $\alpha$ of $[0,1]$, its corresponding grid cover $\alpha^{n+1}$ of $\Pi_{i=0}^{n}I_{i}$, and a subset $A$ of  $\Pi_{i=0}^{n}I_{i}$, let $N^{*}(A, \alpha^{n+1})$ denote the number of members of $\alpha^{n+1}$ that intersect $A$. Let $|\alpha|$ denote the length of the largest interval in $\alpha$. The \textit{box counting dimension} of $A$ can be defined as $$\lim_{|\alpha| \to 0} \frac{\log(N^{*}(A, \alpha^{n+1})}{\log |\alpha|},$$ provided this limit exists. Unfortunately it does not always exist. 

\vphantom{}

For us, the closed sets $G$ in $I_{0} \times I_{1}$ in which we have been interested so far, have topological dimension the same as their box counting dimension. And this is also true of $\star_{i=1}^{n}G$, i.e., the box counting dimension of $\star_{i=1}^{n}G$ is the same as the topological dimension of $\star_{i=1}^{n}G$. Hence, relative to box counting dimension, our sets are rather simple.

\vphantom{}

Box counting dimension is intended to measure the dimension of a fractal set. There are many other ways to measure this dimension, such as Hausdorff dimension and correlation dimension. They are not equivalent in general. On the other hand, entropy is designed to measure how much points of a set interact with other points of the set (in a very particular way). 

\vphantom{} 

Consider the following example:

\begin{exam} Suppose $G=\{ (0,0),(0,1),(1,0),(1,1) \}$. Then the box counting dimension of $G$ is $0$, as is the box counting dimension of $\star_{i=1}^{n}G$ for each $n$, since $\star_{i=1}^{n}G$ is finite for each $n$. However, the entropy of $G$ is $\log2$,  since for each minimal open cover $\alpha$ of $[0,1]$ by open intervals, $$\frac{\log(N(\star_{i=1}^{n}G,\alpha^{n+1})}{n}=\frac{\log(2^{n+1})}{n}=\frac{(n+1)\log2}{n},$$ so $$\lim_{n \to \infty}\frac{\log(N(\star_{i=1}^{n}G,\alpha^{n+1})}{n}=\lim_{n \to \infty}\frac{(n+1)\log2}{n}=\log2.$$ It follows that $\sup_{\alpha}\ent(G,\alpha)=\log2$.

\end{exam}

Similar results hold for most, if not all, our examples. But we provide one last example that shows box counting dimension can also be greater than the topological entropy.

\begin{exam}
	Let $G=\{(x,x):0 \le x \le 1 \}$, i.e., $G$ is the diagonal from $(0,0)$ to $(1,1).$ in $[0,1]^2$. Then $\ent(G)=0$ and $\dim_{box} (\star_{i=1}^m G)=1$ (\cite{F}, p. 48).
\end{exam}

\end{document}